\documentclass[11pt]{article}
\usepackage{amsfonts,amsmath,amssymb,amsthm, amscd}
\usepackage{endnotes}
\usepackage{graphicx}
\usepackage{float}

\numberwithin{equation}{section}

\newtheorem{theorem}{Theorem}[section]
\newtheorem{prop}[theorem]{Proposition}

\newtheorem{cor}[theorem]{Corollary}

\theoremstyle{definition}
\newtheorem{definition}[theorem]{Definition}
\newtheorem{example}[theorem]{Example}
\newtheorem{remark}[theorem]{Remark}

\def\<{{\langle}}
\def\>{{\rangle}}
\def\G{{\Gamma}}

\def\S{\mathbb S}

\def\ni{\noindent}
\def\bs{\bigskip}

\def\N{\mathbb N}

\def\Si{{\Sigma}}
\def\-{\overline}

\makeatother

\let \ttorg \tt \def \tt{\ttorg \obeyspaces}

\begin{document}

\title{THE PENROSE-KAUFFMAN POLYNOMIAL}

\author{Louis H. Kauffman\\
Math, UIC\\
851 South Morgan Street\\ Chicago, Illinois 60607-7045\\ and \\ International Institute for Sustainability\\ with Knotted Chiral Meta Matter (WPI-SKCM$^2$)\\Hiroshima University\\1-3-1 Kagamiyama \\Higashi-Hiroshima, Hiroshima 739-8531 Japan \and Daniel S. Silver \\Department of Mathematics and Statistics\\ University of South Alabama \and Susan G. Williams\\Department of Mathematics and Statistics\\ University of South Alabama}

\maketitle  

\begin{abstract} For any cubic graph in a closed orientable surface and a perfect matching, the Penrose-Kauffman polynomial is a sum of  chromatic polynomials of a collection of associated graphs. A knot-theoretic perspective affords elementary proofs of old and new results about the polynomial. The Four Color Theorem is shown to be equivalent to a statement about 3-coloring alternating link diagrams in the plane that are reduced and have no bigon regions. \end{abstract} 

Keywords: Cubic graph, Penrose polynomial, link diagrams, alternating

MSC 2020:  
Primary 05C15; secondary 57M15.

\section{Introduction}  The Penrose polynomial of a plane cubic graph and perfect matching is implicit in \cite{Penrose}. Among its notable properties is that evaluation at 3 gives the number of proper edge 3-colorings of the graph. The polynomial has been studied in many places \cite{aigner, bm, em, ekm, jaeger}. 

In \cite{kauffman} the first author defined a generalized Penrose polynomial for cubic graphs in closed orientable surfaces, suggesting a knot-theoretic interpretation. Our purpose is to develop further that interpretation. We show how it makes possible elementary proofs of previously established theorems about the Penrose polynomial as well as proofs of new results. 

The first author's original version of the generalized Penrose polynomial is in the appendix. It can be read first if the reader so desires. 
\smallskip


\section{Tait Colorings}\label{TaitColorings} The Four Color Theorem asserts that every map in the plane can be colored using only four colors in such a way that any two adjacent regions receive distinct colors. It was originally proposed in 1852 by Francis Guthrie, a former student of Augustus de Morgan. A proof was given 1879 by Alfred Bay Kemp and another a year later by Peter Guthrie Tait. 
Both proofs turned out to be deficient. The theorem was finally proved with the use of computer in 1976 by Kenneth Appel and Wolfgang Haken. 

Tait's proof of the Four Color Theorem was incomplete, but he did establish the following. 

\begin{theorem}\label{tait} (P.G. Tait) The Four Color Theorem holds if and only if every bridgeless cubic plane graph has a proper edge 3-coloring. \end{theorem}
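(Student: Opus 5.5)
The plan is to identify the four colors with the Klein four-group $V=\mathbb Z_2\times\mathbb Z_2=\{0,a,b,c\}$, so that the three nonzero elements $a,b,c$ serve as the edge colors.

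\emph{Forward direction: face 4-coloring gives edge 3-coloring.} Let $G$ be a bridgeless cubic plane graph and regard its faces as a map. Because $G$ has no bridge, the two sides of every edge lie in distinct faces, so the map has no region adjacent to itself and the Four Color Theorem applies. Fix a proper face coloring with values in $V$. Color each edge by the sum of the colors of the two faces it separates. This sum is nonzero because adjacent faces get distinct colors. At a vertex the three incident faces $F_1,F_2,F_3$ are pairwise adjacent, so they carry three distinct colors $x,y,z$. The incident edges then receive $x+y$, $y+z$, $x+z$, which are pairwise distinct since $x,y,z$ are distinct. Hence this is a proper edge 3-coloring.

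\emph{Converse: edge 3-coloring gives face 4-coloring.}
\begin{itemize}
\item First reduce the Four Color Theorem to bridgeless cubic plane graphs. Given a map, we may assume it is connected and has no region adjacent to itself, since otherwise it cannot be colored at all. Replace each vertex of degree $d\ge 4$ by a small new face (a small disk), which makes all vertices cubic. Vertices of degree $2$ can be suppressed, and degree-$1$ situations do not arise once no face touches itself.
\item Coloring the enlarged map and then shrinking the added disks back to points yields a coloring of the original map. This works because any two faces adjacent in the original map remain adjacent, along the same edge, in the modified map.
\item The new graph is cubic and has no face adjacent to itself, so it is bridgeless.
\item Now assume a proper edge 3-coloring with colors $a,b,c\in V$ is given. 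Fix a base face $F_0$ and give it color $0$. To color any other face $F$, choose a path in the dual from $F_0$ to $F$ and add up the colors of the edges it crosses.
\end{itemize}

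The main obstacle is showing this face coloring is well defined, that is, independent of the chosen path. Any two such paths differ by a closed dual loop. By the Jordan curve theorem, since the plane is simply connected, this loop is a sum (mod 2) of small loops around single vertices. Around a vertex the loop crosses three edges colored $a,b,c$, and $a+b+c=0$ in $V$. Hence every closed loop contributes $0$ and the coloring is well defined.

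Adjacent faces differ by the color of their shared edge, which is nonzero, so the face coloring is proper. This yields a 4-coloring of the original map and completes the proof.
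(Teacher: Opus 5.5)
The paper does not prove this theorem; it quotes it as Tait's classical result. Your argument is the standard Klein four-group proof and it is correct. Its converse half is the same device the paper uses informally at the end of Section~\ref{PKPoly}: there, regions are colored by the Klein-group product of the colors of the arcs crossed along a path from the infinite face, and path-independence is only asserted to be easy to see. You supply the actual reason, namely $a+b+c=0$ at each vertex.

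Two small repairs would make your proof airtight.
\begin{enumerate}
\item Suppressing degree-2 vertices breaks down on a component of the boundary graph that is a simple closed curve $C$, because no cubic vertex survives. Such a $C$ can be handled directly. Delete it and color the smaller map. Then apply a transposition of colors to every region inside $C$, so that the two regions meeting along $C$ receive different colors.
\item The assumption that the map is connected is unnecessary, and your stated reason (that otherwise the map cannot be colored) does not justify it. An edge of a plane graph is a bridge exactly when the same face lies on both sides, whether or not the graph is connected. Path-independence also holds for disconnected graphs. The cleanest way to see this is a parity count. A simple closed curve transverse to $G$ crosses an edge an odd number of times exactly when the edge's endpoints lie on opposite sides of it. Hence the total crossed color is the sum, over the enclosed vertices, of $a+b+c$, which is $0$. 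A general closed curve can be smoothed at its self-crossings into disjoint simple ones without changing which edges it crosses. This argument makes precise your phrase ``a sum of small loops around single vertices.''
\end{enumerate}
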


Recall that a {\it cubic graph} is a graph $G$ such that every vertex has degree 3. It is {\it plane} if it is embedded in $\S^2$. A {\it bridge} is an edge the removal of which disconnects $G$. Finally, a {\it proper edge 3-coloring} of $G$ is a function from the edge set of $G$ to a 3-element set $\{1, 2, 3\}$ so that at every vertex all three elements (``colors") appear. 

Any cubic graph $G$ has an even number vertices (an observation of Euler). A {\it perfect matching} of $G$ is a 
choice $M$ of edges of $G$ such that every vertex of $G$ is adjacent to exactly one edge of $M$. 

Given a cubic graph $G$ and perfect matching $M$, we follow \cite{kauffman} by considering assignments of colors $\{1, 2, \ldots, n\}$ to the edges of $G$ not in $M$, satisfying the condition that, for any edge $e \in M$, the set of colors $\{i, j\}$ of the two edges meeting one end is the same as the set of colors of the edges meeting the other, and moreover $i \ne j$.  (See Figure \ref{Matching}.) We will call such an assignment a  {\it  Tait $n$-coloring} of $(G,M)$. 

Note that a Tait $n$-coloring does not assign colors to the edges of the perfect matching $M$. Nevertheless every Tait 3-coloring extends uniquely to a proper edge 3-coloring of $G$. Conversely, every proper edge 3-coloring of $G$ restricts to a Tait 3-coloring of $G$ for any perfect matching $M$.

\begin{figure}[H]
\begin{center}
\includegraphics[height=1.2 in]{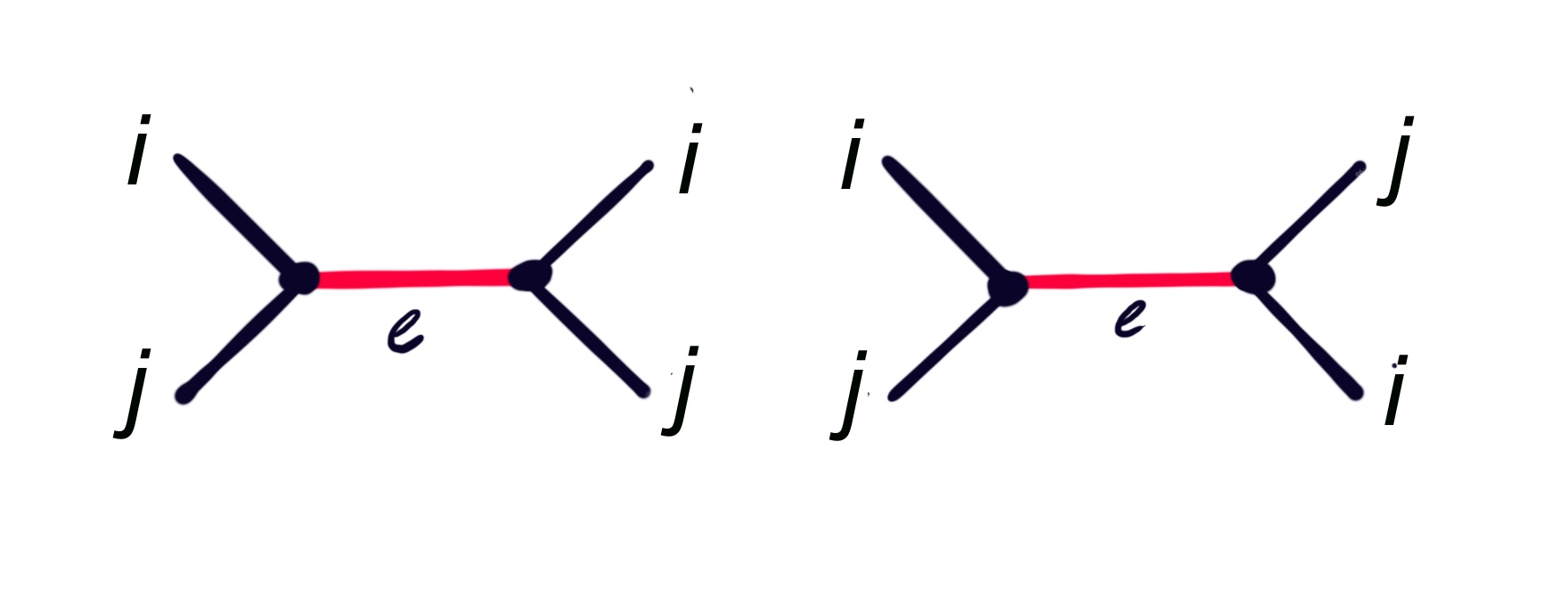}
\caption{Tait n-Coloring Condition}
\label{Matching}
\end{center}
\end{figure}


\section{Penrose-Kauffman Polynomial}  \label{PKPoly} All surfaces are assumed to be connected, closed and orientable.
In \cite{kauffman} the first author defined an integral polynomial for cubic graphs and perfect matchings in surfaces with the property that its evaluation at any positive integer $n \ge 3$ is the number of Tait $n$-colorings. Since there can be only one such polynomial, the following definition is equivalent to the original one. 

\begin{definition} Let $G$ be a cubic graph embedded in a surface together with a perfect matching $M$. The {\it Penrose-Kauffman (PK) polynomial} of $(G,M)$ is the unique integral polynomial $P(q)$ with the property that its evaluation at any positive integer $n$ is the number of Tait $n$-colorings of $(G,M)$. \end{definition}

An  expression for $P(q)$ as a sum of chromatic polynomials appears below in Theorem \ref{PK}.

In \cite{kauffman} the PK polynomial is defined via graph skein relations, examining states of the graph when modifications are performed on the edges of $M$ (see also Appendix). The contributions of states often cancel algebraically. Evaluating the remaining states requires care. As we show, one can recognize these states and their contributions to the PK polynomial. 

Consider any cubic graph $G$ together with a perfect matching $M$ embedded in a surface. Associate a link diagram $D = D(G,M)$ in the surface by replacing each edge $e \in M$ with a {\it crossing} as in Figure \ref{Knot},  a pair of arcs with an artistic device to indicate that if the surface is thickened then one arc ({\it over-arc}) would pass over the other  ({\it under-arc}).  A {\it link component} of $D$ is a closed curve gotten by traveling along the diagram. 

\begin{figure}[H]
\begin{center}
\includegraphics[height=.85 in]{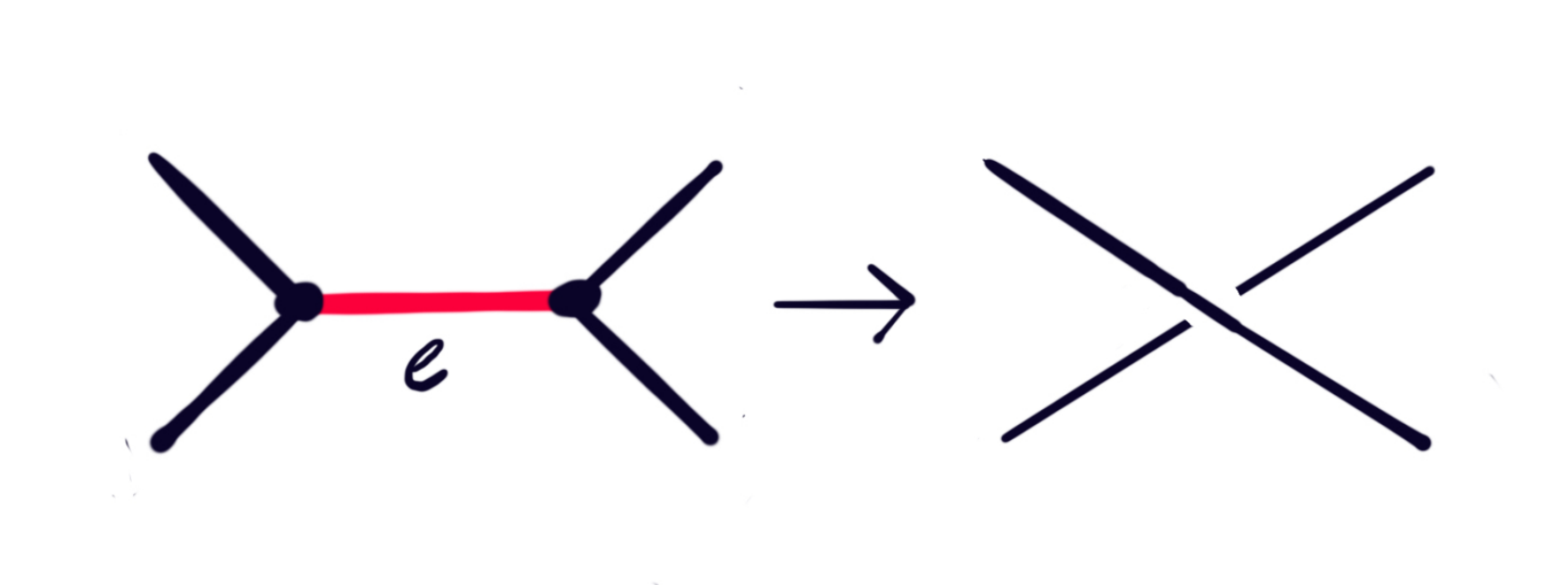}
\caption{Building a Link Diagram $D$ from $(G,M)$}
\label{Knot}
\end{center}
\end{figure}

A {\it (smoothing) state} of $D$ is the link diagram $S$ gotten by smoothing a subset of crossings in the manner shown in Figure \ref{2dots}.  It is helpful to imagine two dots near each crossing, placed so that if the over-arc sweeps counterclockwise, it passes the dots before the reaching the under-arcs. After smoothing, the dots should appear in the same region. (In fact, the dot convention eliminates any need for rendering over- and under-arcs with an artistic device.)  The diagram $D$ is itself a state while smoothing every crossing produces another, the {\it all-smoothed state} $S_0$. 

\begin{figure}[H]
\begin{center}
\includegraphics[height=1 in]{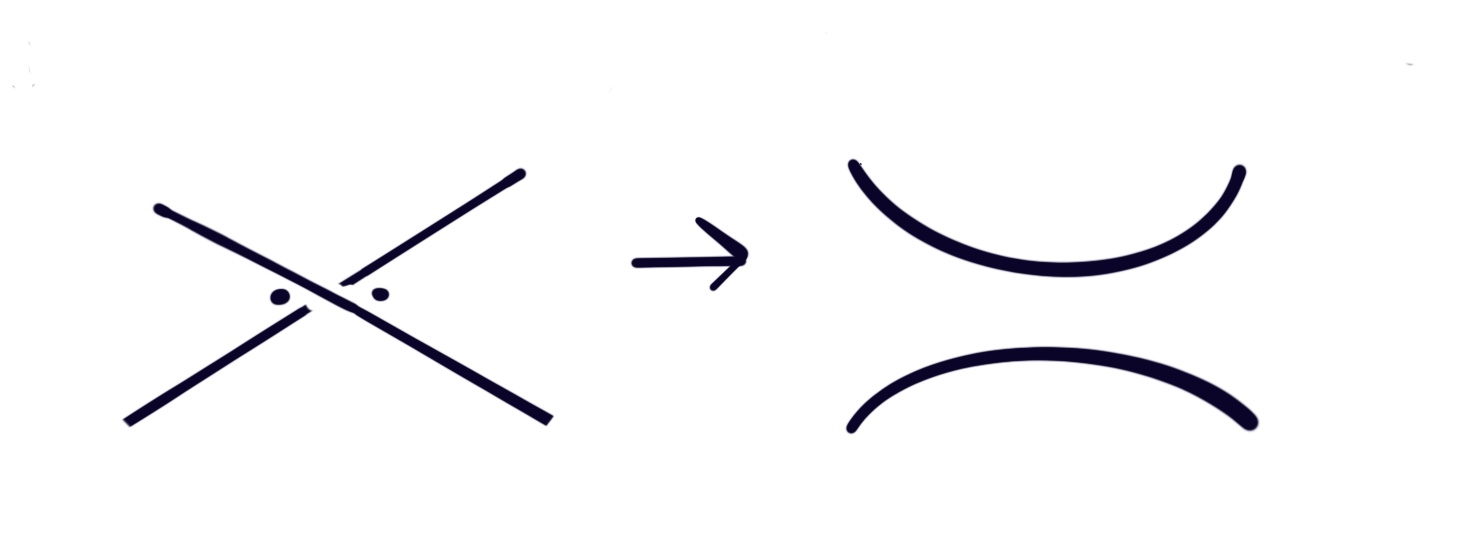}
\caption{Smoothing a Crossing of $D$}
\label{2dots}
\end{center}
\end{figure}

\begin{definition} \label{kiss} A state $S$ is {\it colorable} if, for each edge $e \in M$, the two arcs (crossing or parallel) that replace $e$ lie in different link components $C_e, C_e'$.  If the arcs cross, then we will say that the components {\it cross}; if they are parallel then the components {\it kiss}. \end{definition}

Note that a state is colorable if no component crosses or kisses itself. 

\begin{definition} \label{ncoloring} An {\it $n$-coloring} of a colorable state S is an assignment of colors $1, 2, \ldots, n$ to the link components of $S$ such any pair of components that cross or kiss receive different colors. \end{definition}

The number of $n$-colorings of a colorable state can be computed using the chromatic polynomial. (We will use well-known properties of the chromatic polynomial throughout. Details can be found in  \cite{biggs} and \cite{diestel}, for example.) First, we construct the {\it component graph} $G_S$, with vertices corresponding to the link components of $S$ and a single edge between vertices if the corresponding components cross or kiss. Let $\chi(G_S, q)$ be the chromatic polynomial of $G_S$. Then $\chi(G_S, n)$ is the number of $n$-colorings of  the state $S$. 

\begin{theorem} \label{PK} Let $G$ be a cubic graph with perfect matching $M$ embedded in a surface. The PK polynomial of $(G, M)$ is equal to
$$ \sum_S \chi(G_S, q),$$
where the summation is taken over all colorable states of the link diagram $D$.
\end{theorem}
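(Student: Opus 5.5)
The plan is to prove Theorem \ref{PK} by a bijection, for each positive integer $n$, between Tait $n$-colorings of $(G,M)$ and pairs $(S,c)$, where $S$ is a colorable state of $D$ and $c$ is an $n$-coloring of $S$ in the sense of Definition \ref{ncoloring}. Once this is in place, the number of Tait $n$-colorings equals $\sum_S \chi(G_S,n)$ for every positive integer $n$. Both this sum and $P(q)$ are integral polynomials, so they agree at infinitely many points and are therefore equal.

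First I will fix the combinatorics of $D$. Every vertex of $G$ meets exactly one edge of $M$, so each edge $f\notin M$ has both endpoints at ends of matching edges. In $D$, and in any state $S$, each such $f$ is a piece of arc. At each $e\in M$ the four half-edges of non-matching edges at the two ends of $e$ are joined in pairs, one half-edge from each end per pair. Crossing and smoothing give the only two ways to join them consistently with the dots. The link components of $S$ are thus the closed curves obtained by concatenating non-$M$ edges according to these pairings.

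\emph{From colorings of states to Tait colorings.} Let $c$ be an $n$-coloring of a colorable state $S$. Give each edge $f\notin M$ the color of the component of $S$ containing it. At $e\in M$ the two arcs lie in components $C_e\neq C_e'$ that cross or kiss, so they receive distinct colors $i\neq j$. Each end of $e$ meets one half-edge from each arc, so both ends see the color set $\{i,j\}$. This is exactly the Tait $n$-coloring condition.

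\emph{From Tait colorings to colored states.} Given a Tait $n$-coloring, consider each $e\in M$: each end carries colors $\{i,j\}$ with $i\neq j$. Exactly one of the two pairings (crossing or smoothed) joins the $i$-half-edge at one end to the $i$-half-edge at the other, and the $j$-half-edge to the $j$-half-edge. I choose that pairing at $e$, which defines a state $S$. By construction the color is constant along each arc through each $e$, hence constant along each link component of $S$. At every $e$ the two arcs carry different colors, so they lie in different components; hence $S$ is colorable. Components that cross or kiss at $e$ also get different colors, so the induced map $c$ is an $n$-coloring of $S$.

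\emph{Inverse maps.} These two constructions are mutually inverse. Starting from $(S,c)$, the resulting Tait coloring has distinct colors on the two arcs at each $e$, so the recovered pairing at $e$ is the one used in $S$, and the recovered component colors are $c$. Conversely, starting from a Tait coloring, the colored state reproduces the same edge colors.

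The step needing the most care is the well-definedness of the state. The key fact is that $i\neq j$ forces a \emph{unique} color-compatible pairing at each matching edge. This uniqueness makes the correspondence injective and prevents a coloring from being counted under two states. I will also check that degenerate cases (an edge $f\notin M$ with both ends at the same $e$, or a component passing through $e$ twice) are handled. This is where the colorability hypothesis enters: a component that crosses or kisses itself would force $i=j$, so such states contribute no colorings, consistent with their exclusion from the sum.
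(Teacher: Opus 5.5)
Your proposal is correct and follows the paper's own proof: the paper establishes (via Figure~\ref{Resolve}) exactly this bijection between Tait $n$-colorings of $(G,M)$ and $n$-colorings of colorable states of $D$, then concludes by agreement of the two polynomials at all positive integers. Your write-up fills in details the figure leaves implicit, namely the uniqueness of the color-compatible pairing at each matching edge (forced by $i\neq j$), the two inverse maps, and the degenerate cases.
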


\begin{proof} It suffices to establish a bijection between Tait $n$-colorings of $(G, M)$ and $n$-colorings of the colorable states of $D$. Figure \ref{Resolve} below shows how this is done. 
\end{proof} 

\begin{figure}[H]
\begin{center}
\includegraphics[height=1.8 in]{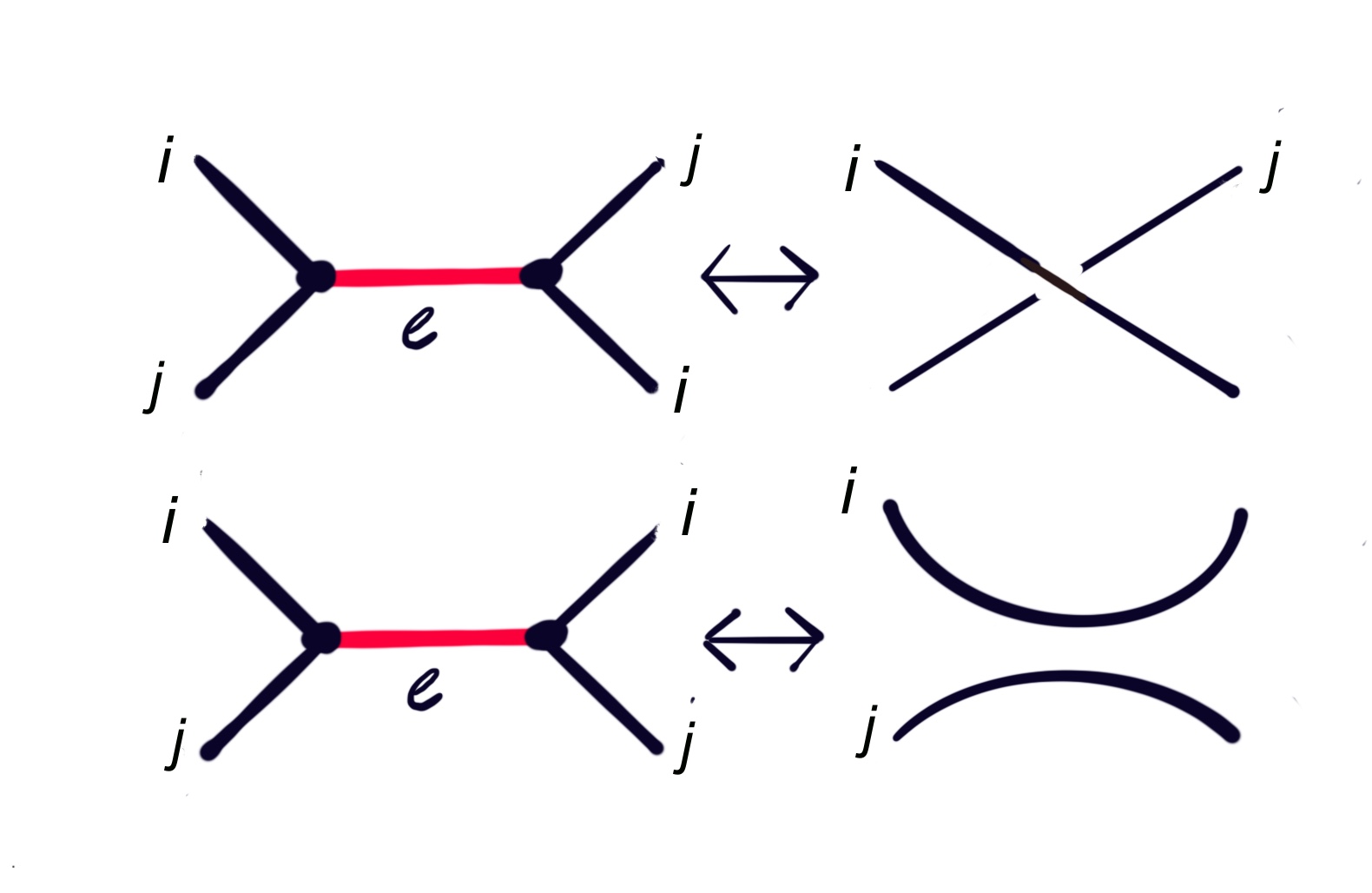}
\caption{Correspondence between  Tait $n$-colorings of $(G,M)$ and $n$-colorings of states $S$}
\label{Resolve}
\end{center}
\end{figure}

We will refer to the polynomial $P(q)$ as the {\it PK polynomial} of both $(G,M)$ and the associated link diagram $D$. If $D$ has no colorable states, then the polynomial is zero.

\begin{example}\label{trefoils}  A diagram of the left-hand trefoil knot appears in Figure~\ref{LHTrefoil}. (Plane link diagrams are regarded in the 2-sphere $\S^2$.) There is only one colorable state $S$, and it appears to the right with its associated component graph below. We have $P(q)= \chi(G_S, q) = q(q-1)(q-2)$. 

\begin{figure}[H]
\begin{center}
\includegraphics[height=1.8 in]{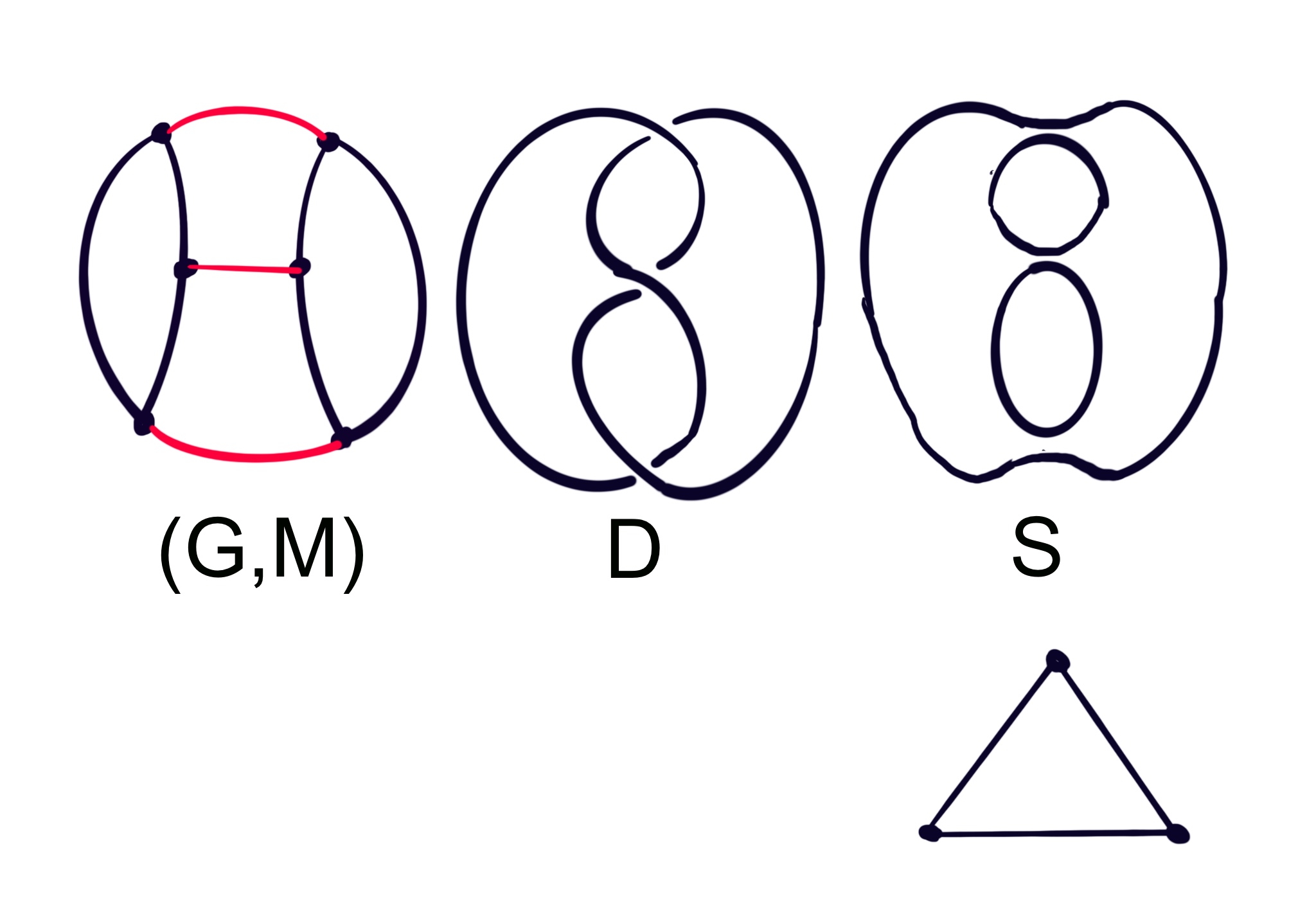}
\caption{Left-hand trefoil, colorable state and component graph}
\label{LHTrefoil}
\end{center}
\end{figure}
\end{example} 

A link diagram of the right-hand trefoil knot appears in Figure~\ref{RHTrefoil}. There are  four colorable states $S$. The component graphs $G_S$ each contribute $q(q-1)$ to the PK polynomial. Hence $P(q) = 4q(q-1)$. 

\begin{figure}[H]
\begin{center}
\includegraphics[height=2 in]{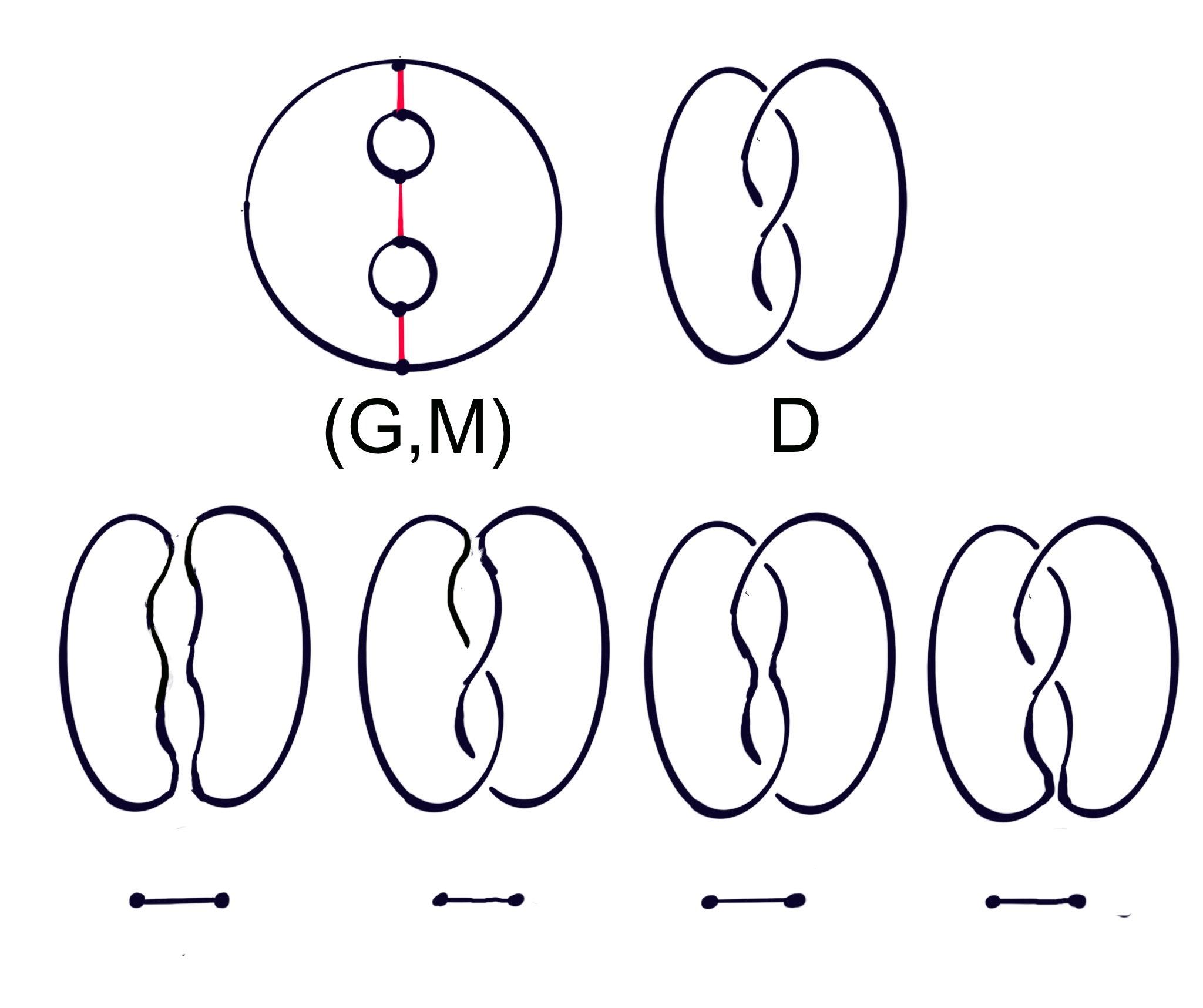}
\caption{Right-hand trefoil, colorable states and component graphs}
\label{RHTrefoil}
\end{center}
\end{figure}

The diagram is {\it reduced} if it does not have the form of (a) or (b) as in Figure \ref{Reduced}. It is {\it prime} if  it is connected, and whenever it has the form of (a), (b) or (c), the diagram inside one of the boxes is a single arc.

\begin{figure}[H]
\begin{center}
\includegraphics[height=1.2 in]{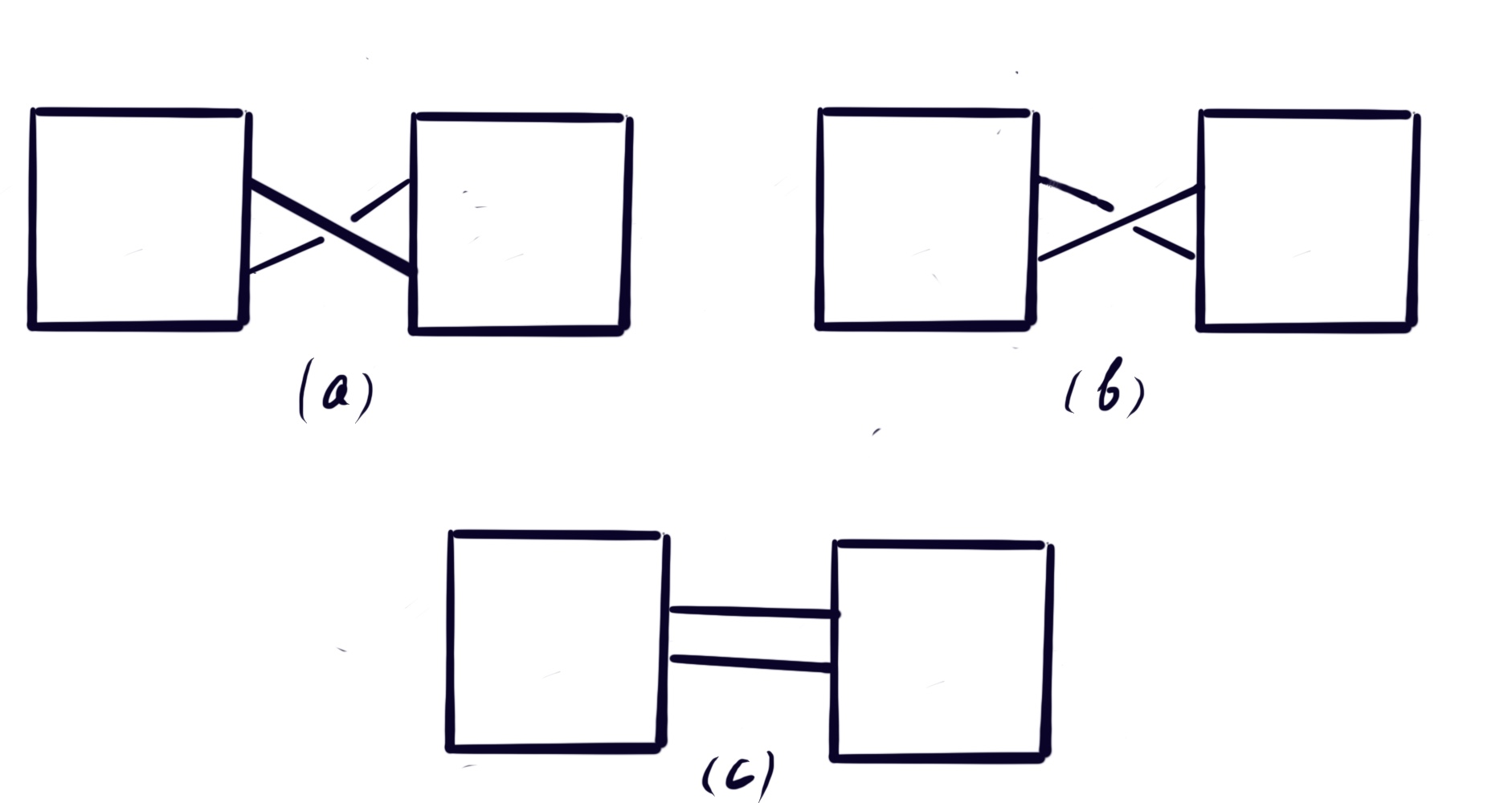}
\caption{Non-prime link diagrams}
\label{Reduced}
\end{center}
\end{figure}

\begin{definition} A {\it connected sum} of link diagrams $D_1, D_2$ is a link diagram $D_1 \sharp D_2$ obtained by cutting an arc of each diagram and connecting them as in Figure \ref{Reduced}(a), (b) or (c). \end{definition}

\begin{prop}\label{Prime} Consider a connected sum diagram $D_1 \sharp D_2$ as in Figure \ref{Reduced}. Let $P_i(q)$ be the PK polynomial of $D_i$, for $i= 1,2$. \medskip

(1) If $D$  has form (a), then $P(q) =0$.  \medskip

(2)  If $D$ has form (b)  then $P(q) = (1-q^{-1}) P_1(q)P_2(q).$ \medskip

(3) If $D$ has form (c), then $P(q) = q^{-1} P_1(q)P_2(q).$
\end{prop}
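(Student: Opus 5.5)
The plan is to work directly with Theorem \ref{PK}: write $P(n)$ as the number of pairs (colorable state $S$ of $D$, $n$-coloring of $S$), relate these pairs to the corresponding pairs for $D_1$ and $D_2$, and then pass from positive integers $n$ to polynomials. I read Figure \ref{Reduced} as follows. In (c) the boxes $D_1,D_2$ are joined by two plain arcs. In (a) and (b) they are joined through a single crossing coming from an edge of $M$, with its four ends split two into each box; (a) and (b) differ in how the dot convention places that crossing relative to the boxes.

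The first step is to analyze that joining crossing in cases (a) and (b). If it is left unsmoothed, follow the over-arc: it runs from the $D_1$ side into the box $D_2$, and the only way back out of $D_2$ is through the under-arc. Hence the over-arc and under-arc lie in the same component, so the state is not colorable. The crossing must therefore be smoothed, and the dot convention allows only one smoothing.
\begin{itemize}
\item In case (a), I expect this smoothing to join the two strands on each side into a single component that kisses itself. Then no state is colorable, and $P(q)=0$.
\item In case (b), the smoothing produces two parallel arcs. One returns into $D_1$ and closes up a component $C_1$ there; the other does the same in $D_2$, giving a component $C_2$. These two components kiss.
\end{itemize}
In cases (b) and (c), colorable states of $D$ then correspond bijectively to pairs $(S_1,S_2)$ of colorable states of $D_1$ and $D_2$. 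Here $D_i$ is the diagram obtained by closing the cut arc, and the closed arc lies in a distinguished component $C_i$ of $S_i$. The components of $S$ are the components of $S_1$ and of $S_2$, except for the following adjustment:
\begin{itemize}
\item in case (c), $C_1$ and $C_2$ fuse into a single component;
\item in case (b), $C_1$ and $C_2$ stay separate and acquire one new kissing relation.
\end{itemize}
The crossing and kissing relations inside each box are unchanged. I must check that colorability of $S$ is equivalent to colorability of both $S_i$; this holds because a self-crossing or self-kiss of the fused component would occur inside one box.

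The counting step uses the symmetry of the colors. The symmetric group on $\{1,\dots,n\}$ acts freely and transitively enough that, for a fixed state $S_i$, the number of $n$-colorings giving $C_i$ any prescribed color is $\chi(G_{S_i},n)/n$.
\begin{itemize}
\item In case (c), an $n$-coloring of $S$ is a pair of colorings of $S_1$ and $S_2$ that agree on $C_1=C_2$. The count is $n\cdot(\chi(G_{S_1},n)/n)(\chi(G_{S_2},n)/n)=\chi(G_{S_1},n)\chi(G_{S_2},n)/n$.
\item In case (b), the two colorings must instead give $C_1$ and $C_2$ different colors. The count is $n(n-1)(\chi(G_{S_1},n)/n)(\chi(G_{S_2},n)/n)=(1-n^{-1})\chi(G_{S_1},n)\chi(G_{S_2},n)$.
\end{itemize}
Summing over pairs $(S_1,S_2)$ gives $P(n)=n^{-1}P_1(n)P_2(n)$ in case (c) and $P(n)=(1-n^{-1})P_1(n)P_2(n)$ in case (b), for every $n\ge 1$. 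Two rational functions that agree at infinitely many integers are equal, which gives the stated identities.

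The main obstacle is the first step: pinning down from the dot convention which smoothing is forced in (a) versus (b). I would also need to verify that the state correspondence in the second step is an honest bijection, in particular that the states of the $D_i$ match up with closing arcs consistently. I would settle both by tracing the two local pictures of Figure \ref{Reduced} directly.
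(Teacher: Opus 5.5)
Your proposal is correct and is essentially the paper's proof: colorable states of $D$ are pairs of colorable states of $D_1,D_2$, with $C_1,C_2$ fused in case (c) or joined by a single kissing edge in case (b); in case (a) the forced smoothing leaves both arcs running between the boxes, so your ``only way back out'' argument puts them on one self-kissing component. The only difference is that the paper obtains the factors $q^{-1}$ and $1-q^{-1}$ from the one-point-union formula for chromatic polynomials (citing Crapo) and from deletion--contraction, whereas you derive them by a direct color-permutation count; there a transposition of two colors already gives the needed bijection, so freeness of the $S_n$-action is neither needed nor true in general.
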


\begin{proof} Consider a connected sum diagram $D_1 \sharp D_2$ as in Figure \ref{Reduced}(c), obtained by cutting $D_i$ along an arc $a_i$. 
Each colorable state $S$ is itself a connected sum of a colorable state $S_1$ of $D_1$ and a colorable state of $S_2$ of $D_2$. Let $C_i$ be the component of $S_i$ containing the arc $a_i$. Then $C_1$ and $C_2$ combine as a single component of $S$. The contribution of $S$ to $P(q)$ is the chromatic polynomial of the graphs $G_{S_1}, G_{S_2}$ joined along the vertices corresponding to $C_1$ and $C_2$. By \cite{crapo} it is equal to the product of the chromatic polynomials of $G_{S_1}, G_{S_2}$ divided by $q$.  

The proof for a connected sum diagram as in Figure \ref{Reduced}(b) is similar. However, in this case, the vertices of $G_{S_i}$ corresponding to $C_i$, for $i=1,2$, are joined by a single edge in $G_S$.  The well-known deletion/contraction recursion for chromatic polynomials completes the argument.

For any connected sum diagram as in Figure \ref{Reduced}(a) every state has a self-crossing or self-kissing component. Hence the PK polynomial is zero.  \end{proof}

\begin{definition} A link diagram is {\it semi-reduced} if it does not have the form Figure \ref{Reduced}(a). \end{definition}

Recall that Figure \ref{Knot} gives a construction of a link diagram $D=D(G,M)$ in a surface $\Si$ from a cubic graph and perfect matching embedded in $\Si$.  
We can also invert this process, associating a cubic graph $G$ and perfect matching $M$ to a link diagram $D$. 

\begin{prop}\label{bridgeless}  The function $(G,M) \mapsto D$ induces a bijection between bridgeless cubic graphs with perfect matchings embedded in a surface $\Si$ and semi-reduced link diagrams in $\Si$. \end{prop}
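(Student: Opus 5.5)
The plan is to make the inverse construction explicit, check that the two maps undo each other, and then show that the bridgeless condition on one side corresponds exactly to semi-reducedness on the other.

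\emph{Inverse construction.} Given a link diagram $D$ in $\Si$, replace each crossing by an edge $e$ whose two endpoints are trivalent vertices, as in Figure~\ref{Knot} read backwards. The four arc-ends at the crossing are paired off into the two ends of $e$, and this pairing is fixed by the dot convention of Figure~\ref{2dots}. The result is a cubic graph $G$ embedded in $\Si$, and the new edges form a perfect matching $M$. Components of $D$ with no crossings would give closed loops without vertices; I will assume diagrams are taken without crossingless components, or treat such loops by the usual convention for a vertexless edge.

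\emph{Mutually inverse.} Both constructions are local at the edges of $M$ and at the crossings, and each local move is undone by the other. Hence the two maps are inverse bijections between all pairs $(G,M)$ and all link diagrams in $\Si$. It remains to match the side conditions.

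\emph{Bridges versus form (a).} I would read form (a) of Figure~\ref{Reduced} as a crossing $c$ together with a simple closed curve (or separating arc system) in $\Si$ meeting $D$ only at $c$, with diagram pieces on both sides. This is the nugatory-crossing picture. The claim to prove is that $D$ has form (a) if and only if $G$ has a bridge; I would argue the two directions separately.

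First direction: suppose $e$ is a bridge of $G$.
\begin{itemize}
\item If $e\in M$, then $G\setminus e$ has two pieces. A regular neighborhood of one piece, pinched at $e$, gives a curve through the crossing $c_e$ separating $D$ into two parts. This is form (a).
\item If $e\notin M$, look at the matching edges $f,f'$ at the ends of $e$. Removing $e$ disconnects $G$, so $e$ is the only edge joining the two sides. Then the cubic parity count (each side has an odd number of degree sum from the missing edge) is the place to check. It should force that one side cannot be perfectly matched internally, contradicting the fact that $M$ restricts to each side. So a non-matching edge can never be a bridge.
\end{itemize}
I expect the parity check in the $e\notin M$ case to need care, since $M$ restricted to a side must perfectly match that side.

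Second direction: given form (a), the separating curve through $c$ shows that the matching edge $e_c$ is the only connection between the two pieces of $G$. So $e_c$ is a bridge.

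\emph{Main obstacle.} The hardest step is making ``form (a)'' precise in a surface $\Si$ of positive genus. There a bridge edge need not bound a disk-like box, so I must verify that the separation from a bridge yields exactly the boxed picture of Figure~\ref{Reduced}(a), with the boxes allowed to contain genus. The first thing I would try is to take a regular neighborhood of each side of the bridge and use that the complement of the edge is disconnected. This should realize the boxes as subsurfaces meeting along the crossing, and it avoids any need for a disk.
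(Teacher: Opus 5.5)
\textbf{Overall.} Your skeleton is the same as the paper's proof:
\begin{itemize}
\item parity forces any bridge of $G$ into $M$;
\item a bridge in $M$ gives form (a), and conversely;
\item the local construction $(G,M)\mapsto D$ is invertible.
\end{itemize}
The parity step you leave open is immediate. If $A$ is one side of $G\setminus e$, its degree sum is $3|A|-1$, so $|A|$ is odd. But $e\notin M$ would make $M$ restrict to a perfect matching of $A$, which is impossible.

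\textbf{The gap: your reading of form (a).} You take form (a) to be an arbitrary nugatory crossing, and that is too broad. A curve meeting $D$ only at $c$ passes through $c$ from one corner to the opposite corner, and the two possible choices are not equivalent.

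Write $e_c=uv$, with ends $x_1,x_2$ at $u$ and $y_1,y_2$ at $v$, where $x_i$ and $y_i$ lie on the same side of $e_c$. The crossing joins $x_1y_2$ and $x_2y_1$. The only smoothing used in states is the one parallel to $e_c$, joining $x_1y_1$ and $x_2y_2$. Hence the dots sit in the two corners at the ends of $e_c$.
\begin{itemize}
\item A curve through the two \emph{undotted} corners separates $\{x_1,x_2\}$ from $\{y_1,y_2\}$ and crosses $e_c$ once. Then $e_c$ is a bridge, and every state has a self-crossing or self-kissing component. This is form (a).
\item A curve through the two \emph{dotted} corners separates $\{x_1,y_1\}$ from $\{x_2,y_2\}$. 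Then the edges of $G$ at $x_1$ and $y_1$ form a 2-edge cut, and $e_c$ is not a bridge. This is form (b).
\end{itemize}
Semi-reduced diagrams are allowed to contain form (b). The paper excludes only (a) for semi-reduced but both (a) and (b) for reduced, and Proposition~\ref{Prime} gives different polynomials for the two forms.

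\textbf{Counterexample.} Take the one-crossing figure-eight diagram with its dots in the two outer corners. It is nugatory, yet its graph is the theta graph, which is bridgeless, and $P(q)=q(q-1)\neq 0$. So your second direction (``the separating curve through $c$ shows that $e_c$ is the only connection between the two pieces of $G$'') is false as stated. Under your reading, the proposition itself would fail.

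\textbf{Fix.} Build the dot convention into the definition of form (a). Either require the curve to pass through the undotted corners at $c$, or equivalently require that the third smoothing at $c$ (joining $x_1x_2$ and $y_1y_2$, which corresponds to deleting $e_c$ from $G$) splits off a piece of the diagram. With that correction, both of your directions go through, including your subsurface version of the boxes in positive genus.
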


\begin{proof} Suppose that $(G,M)$ is any cubic graph with perfect matching embedded in $\Si$. We observe that if $G$ has a bridge, then removing it produces two components, disjoint graphs each having an odd number of vertices. (One way to see that each has an odd number of vertices is to remove the single vertex of degree 2 in either graph to create a single edge. The resulting graph is cubic and therefore must have an even number of vertices, which implies that each of the original components had an odd number.) It follows that if $G$ has a bridge, then that edge must be in $M$, since neither component can have a perfect matching.  Hence $D$ has the form Figure \ref{Reduced} (a), and it is not semi-reduced. Conversely, any link diagram $D\subset \Si$ comes from a unique pair $(G,M)$. If $D$ is not semi-reduced, then $G$ has a bridge.

\end{proof} 

 The following is an immediate consequence of Proposition \ref{bridgeless} together with Theorems \ref{tait} and \ref{PK}. 

\begin{cor}\label{4ct} The Four Color Theorem is equivalent to the statement: Every semi-reduced link diagram in $\S^2$ has a 3-colorable state.  \end{cor}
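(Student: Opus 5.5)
We chain the three cited results, checking at each link that the objects correspond. By Theorem \ref{tait}, the Four Color Theorem is equivalent to: every bridgeless cubic plane graph $G$ has a proper edge 3-coloring. So it suffices to show that this graph-theoretic statement is equivalent to: every semi-reduced link diagram $D\subset\S^2$ has a 3-colorable state, meaning a colorable state $S$ with $\chi(G_S,3)>0$.

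First we fix the bridge between edge colorings and states. Let $G$ be a bridgeless cubic plane graph. By Petersen's theorem $G$ has a perfect matching $M$. Then $G$ has a proper edge 3-coloring iff $(G,M)$ has a Tait 3-coloring, by the observation in Section \ref{TaitColorings} that Tait 3-colorings are exactly restrictions of proper edge 3-colorings. The number of Tait 3-colorings is $P(3)$. By Theorem \ref{PK}, $P(3)=\sum_S \chi(G_S,3)$. Every term is a nonnegative count, so $P(3)>0$ iff some colorable state $S$ of $D=D(G,M)$ has $\chi(G_S,3)>0$, i.e.\ $D$ has a 3-colorable state. Here we use that Theorem \ref{PK} is really a bijection of colorings, not merely a polynomial identity. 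Also, $D$ is semi-reduced by Proposition \ref{bridgeless}.

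Now the two directions. Suppose every semi-reduced diagram in $\S^2$ has a 3-colorable state. Given a bridgeless cubic plane $G$, choose $M$ as above. Then $D(G,M)$ is semi-reduced, hence has a 3-colorable state, hence $G$ is 3-edge-colorable. Conversely, suppose every bridgeless cubic plane graph is 3-edge-colorable, and let $D\subset \S^2$ be semi-reduced. By Proposition \ref{bridgeless}, $D=D(G,M)$ for a bridgeless cubic plane $G$ with perfect matching $M$. An edge 3-coloring of $G$ restricts to a Tait 3-coloring of $(G,M)$, which by the bijection yields a 3-colorable state of $D$.

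The main subtlety is the existence of a perfect matching in the first direction. The statement about diagrams only sees pairs $(G,M)$, while Tait's criterion concerns graphs alone. Petersen's theorem, that bridgeless cubic graphs have perfect matchings, fills this gap and would be cited. Alternatively, one could avoid it by restricting Tait's criterion to graphs that admit a perfect matching, since any 3-edge-colorable graph has one.

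A minor point is degenerate cases: loops, multiple edges, and closed loop components of the diagram. These should correspond consistently under the construction of Figure \ref{Knot}, which we take as given by Proposition \ref{bridgeless}.
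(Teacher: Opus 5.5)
Your argument is correct and is essentially the paper's, which simply chains Theorem~\ref{tait}, Theorem~\ref{PK} and Proposition~\ref{bridgeless}. You do well to make explicit the appeal to Petersen's theorem, which the paper leaves implicit when it passes from an arbitrary bridgeless cubic plane graph to a pair $(G,M)$. Your suggested way of avoiding Petersen does not work as stated, though: restricting Tait's criterion to graphs that admit a perfect matching leaves the implication toward the Four Color Theorem unproved for any bridgeless cubic plane graph that lacks one, and the fact that 3-edge-colorable graphs have perfect matchings only helps in the other, trivial direction.
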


\begin{remark} We will improve on Corollary \ref{4ct} in Theorem \ref{equivalent2} by showing that only certain prime alternating link diagrams need be considered for the Four Color Theorem.  
\end{remark} 

It might seem that we have moved far away from Guthrie's original question about coloring maps. However, the correspondence between map colorings and 3-colorings of states of a link diagram are easily seen. For this, associate the elements of the Klein 4-group with four colors. The product of any two nontrivial elements of the abelian group is the third nontrivial element while the square of any element is trivial.  As in Figure \ref{mapping}, we extract from the map a plane graph $G$, its medial graph (see \cite{diestel}), and alternating link diagram $D$. Now given any colorable state with 3-coloring, we color the finite regions of the map in the  following way. Follow a path from the infinite face of $D$ to the interior of any region, crossing arcs of $D$ transversely.  Assign to the region the product of colors of the arcs that we cross. It is easy to see that the value does not depend on the choice of path. Moreover, the assignment pulls back to a coloring of the map with four colors. The process is reversible, associating a 3-colored state to any map coloring with four colors. 

\begin{figure}[H]
\begin{center}
\includegraphics[height=3 in]{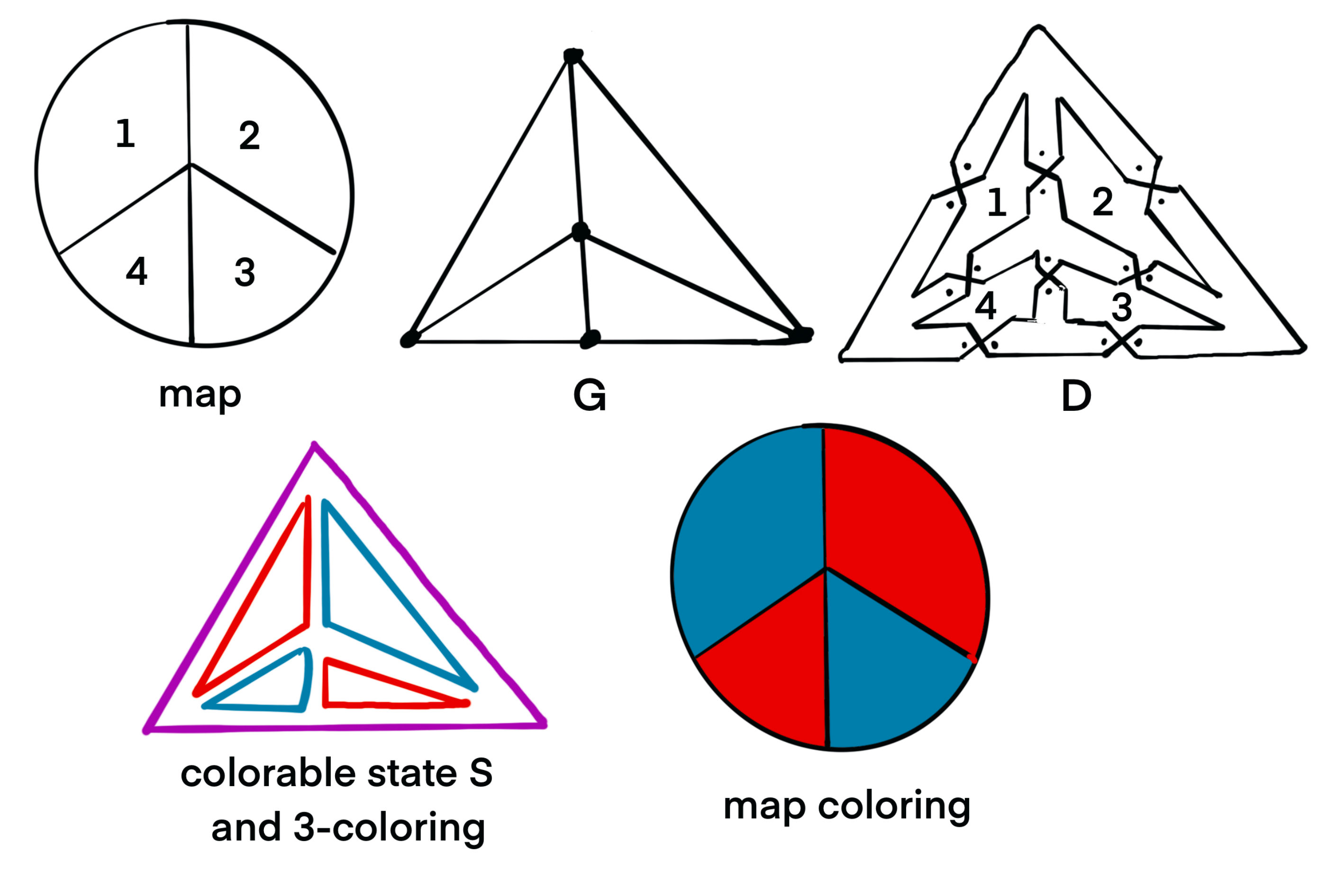}
\caption{Map coloring from 3-colorable state of associated link diagram}
\label{mapping}
\end{center}
\end{figure}


\section{Aigner's approach to the Penrose polynomial} \label{aigner's}

Let $G$ be any plane graph (not necessarily cubic) and $M(G)$ its medial graph (see \cite{diestel}). We shade the faces of $M(G)$ that $G$ bisects.  A left-right walk is a closed path that follows the medial graph, crossing at the midpoint of each edge, and continuing in the same direction. For any subset $A$ of the edge-set of $G$ consider the closed paths that are gotten from  left-right walks along $M(G)$, crossing only those edges that are contained in $A$, as in Figure \ref{LR}.  Let $c(A)$ be the number of the  paths. (For example, if $A$ is empty, then $c(A)$ is the number of faces of $G$.)  The Penrose polynomial ${\cal P}(q)$, implicit in \cite{Penrose}, is equal to $${\cal P}(q)=\sum_A (-1)^{|A|} q^{c(A)}.$$

\begin{figure}[H]
\begin{center}
\includegraphics[height=1.5 in]{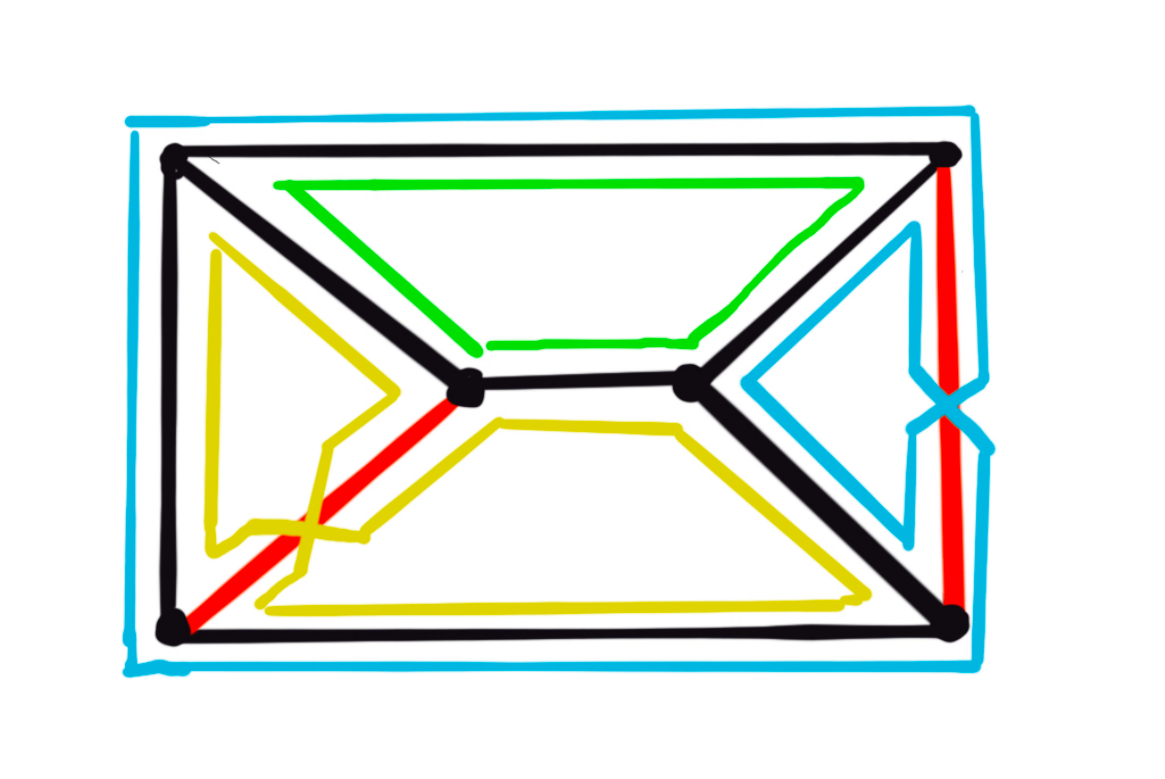}
\caption{Left-right walks corresponding to the set $A$ consisting of the two red edges. Here $c(A)=3$.} 
\label{LR}
\end{center}
\end{figure}

Penrose's motivation was to prove what in 1969 was called the Four Color Conjecture. When $G$ is a plane cubic graph, the value ${\cal P}(3)$ is the number of proper edge 3-colorings of $G$. Hence for Penrose,  proving the conjecture was equivalent to showing that ${\cal P}(3)$ is nonzero for any bridgeless plane cubic graph.

An {\sl admissible $n$-valuation} of $M(G)$ is a coloring of its edges with $1, \ldots, n$ such that at every vertex exactly two colors appear in one of the two configuations shown in Figure \ref{valuation}. Proposition 4 of \cite{aigner} implies that ${\cal P}(n)$  is equal to the number of admissible $n$-valuations of $M(G)$. 

\begin{figure}[H]
\begin{center}
\includegraphics[height=1.5 in]{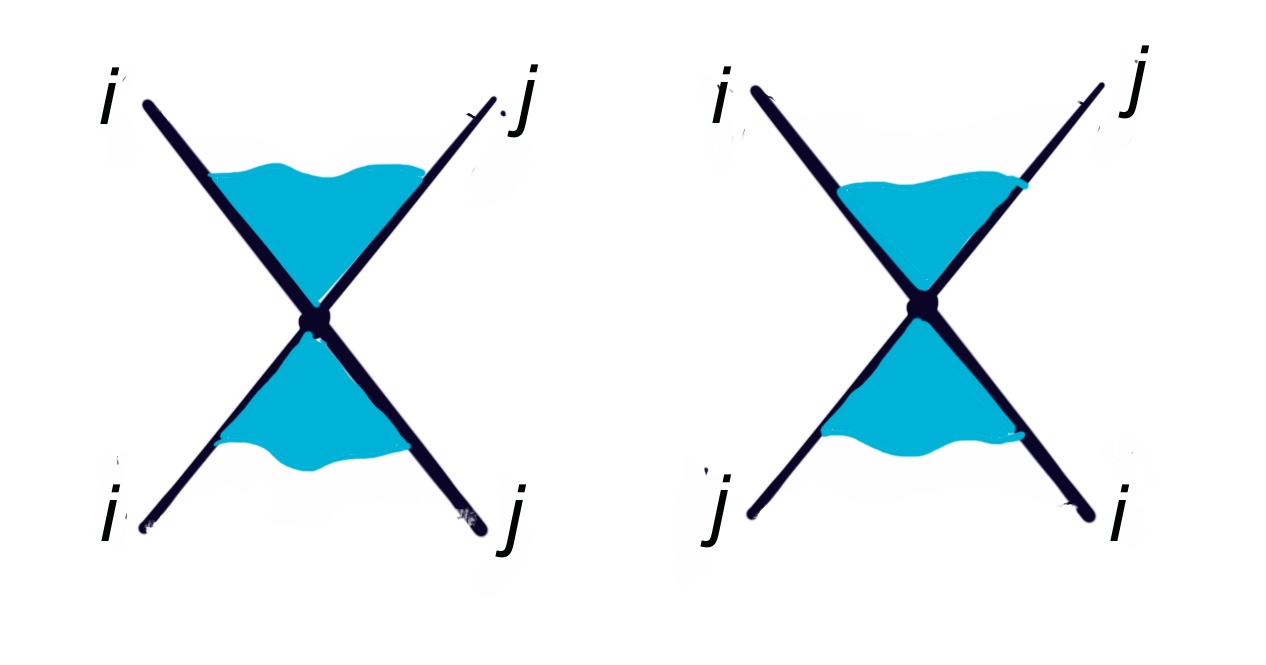}
\caption{Admissible $n$-valuation labeling}
\label{valuation}
\end{center}
\end{figure}

Every Penrose polynomial arises as a PK polynomial. To see this, ``blow up" each vertex of $G$ to obtain a cubic graph $\hat G$ (see Figure \ref{augment2}). The original edges of $G$ provide a perfect matching $M$ for $\hat G$. Now $n$-valuations of $M(G)$ correspond bijectively to Tait $n$-colorings of $(\hat G, M)$. (Here the medial graph $M(G)$ is the projection of the link diagram $D$ with each crossing point replaced by a 4-valent vertex.) Hence Aigner's polynomial ${\cal P}(q)$ for $G$ is the same as the PK polynomial $P(q)$ for $(\hat G, M)$.

A link diagram in a surface is {\it alternating} if, whenever we follow along a component, we encounter over- and under-crossings alternately. It is not difficult to see that the  link diagram corresponding to $(\hat G, M)$ is alternating. In fact, every alternating link diagram arises this way. Consequently, proving the Four Color Theorem  amounts to showing that every semi-reduced alternating link diagram in the plane has a 3-colorable state.  Moreover, Proposition \ref{Prime} implies that one can restrict attention to prime diagrams. Summarizing:

\begin{theorem}\label{equivalent} The Four Color Theorem is equivalent to the statement: Every reduced prime alternating link diagram in the plane has a 3-colorable state. \end{theorem}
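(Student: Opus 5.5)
We prove both directions, using Corollary \ref{4ct}, Theorem \ref{tait}, Theorem \ref{PK} and Proposition \ref{Prime}.

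\emph{The Four Color Theorem implies the statement.} A reduced prime alternating diagram $D$ in $\S^2$ is in particular semi-reduced, since it does not have form (a) of Figure \ref{Reduced}. By Corollary \ref{4ct}, $D$ then has a 3-colorable state.

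\emph{The statement implies the Four Color Theorem.} By Theorem \ref{tait} it suffices to show that every bridgeless plane cubic graph is edge 3-colorable. We reduce this to alternating diagrams in two steps.

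First, reduce to alternating diagrams through Aigner's construction. Given a map, take the plane graph $G$ of Figure \ref{mapping} (the dual of the cubic map) and blow up each vertex to get $(\hat G,M)$. By the discussion above, its diagram $D$ is alternating, and its PK polynomial equals ${\cal P}(q)$ for $G$. A 3-colorable state of $D$ yields, through the Klein four-group construction, a proper 4-coloring of the map. It therefore suffices to show that $D$ has a 3-colorable state whenever the map is a genuine map, meaning that no region is adjacent to itself. This condition says that $G$ is loopless, which in turn says that $D$ is semi-reduced: a nugatory crossing of form (a) corresponds to a loop of $G$ or to an isthmus of its dual. Conversely, every semi-reduced alternating plane diagram arises in this way, since a checkerboard shading recovers $G$.

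Second, reduce to reduced prime diagrams by induction on the number of crossings. Let $D$ be semi-reduced and alternating. Suppose $D$ is not reduced or not prime. Then it is disconnected, or it has form (b) or (c) with both boxes nontrivial. Form (a) is excluded by semi-reducedness.
\begin{itemize}
\item If $D$ is a split union, its PK polynomial is a product, since colorable states are disjoint unions of colorable states of the pieces.
\item If $D$ has form (b) or (c), Proposition \ref{Prime} gives $P(3)=\tfrac23P_1(3)P_2(3)$ or $P(3)=\tfrac13P_1(3)P_2(3)$ respectively.
\end{itemize}
In each case the pieces $D_1,D_2$ are again alternating, and they have fewer crossings. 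They are semi-reduced: a nugatory crossing of form (a) in $D_i$ would remain nugatory in $D$, because the connected-sum arc lies on one side of the separating circle. By induction $P_1(3),P_2(3)>0$, hence $P(3)>0$. Since $P(3)$ counts 3-colorings of colorable states by Theorem \ref{PK}, $D$ has a 3-colorable state.

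The base case is a reduced prime alternating diagram, which has $P(3)>0$ by hypothesis. The crossingless diagram, a single circle, also needs a check: it has $P(q)=q$ and $P(3)=3>0$.

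The main obstacle is the bookkeeping in the decomposition step. One must check two things. First, the factors $D_1,D_2$ stay alternating and semi-reduced; alternation is preserved because cutting and reconnecting one arc on each side keeps the over/under pattern along each strand. Second, the cases "not reduced" and "not prime" exhaust the ways a diagram can fail to be reduced and prime: a non-reduced but semi-reduced diagram has form (b), and a connected non-prime one has form (b) or (c) with nontrivial boxes. I would try the first check first by direct inspection of Figure \ref{Reduced}.
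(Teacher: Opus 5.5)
\paragraph{A duality slip breaks the Klein four-group step.} Your direction from the Four Color Theorem (via Corollary \ref{4ct}) and your induction via Proposition \ref{Prime} are fine. The problem is in the reduction to alternating diagrams.

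You blow up the vertices of the dual triangulation $G$. That makes $G$ the Tait graph of $D$, so the countries (the vertices of $G$) are the \emph{dotted} regions. In every state, the smoothing at $c_e$ is the ``parallel'' option. Its two arcs run along the two sides of $e$ and open a channel joining exactly the two dotted quadrants, i.e.\ the two countries at the ends of $e$. A path between these two countries through the channel crosses no arc of $S$, so they receive the same Klein color. Hence any state with a smoothed crossing colors some pair of adjacent countries alike. Only the unsmoothed state $S=D$ could give a proper coloring, and the hypothesis does not say that $D$ itself is colorable.

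The same slip appears in your semi-reducedness claim. Form (a) means the two \emph{undotted} quadrants lie in one region, i.e.\ an isthmus of the Tait graph. A loop of the Tait graph gives form (b). Your phrase ``a loop of $G$ or an isthmus of its dual'' states one condition twice, since loops of a plane graph are exactly the isthmi of its dual, and for your $D$ that condition is the form (b) one. So ``$G$ loopless'' does not yield semi-reducedness; your conclusion survives only because the dual of a bridgeless cubic map is also bridgeless.

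\paragraph{The fix, and how the corrected route compares.} Blow up the map graph $H$ itself, which is the same as taking the mirror image of your $D$. Then:
\begin{itemize}
\item The countries are the undotted regions, i.e.\ the faces of the Tait graph $H$.
\item Two adjacent countries are separated at their crossing either by two crossing arcs or by two kissing arcs. In a colorable state these lie in different components, so they have distinct colors, and their Klein product is nontrivial.
\item Semi-reduced $\iff$ $H$ is bridgeless $\iff$ no country is adjacent to itself.
\end{itemize}
With this change your route is a genuine alternative to the paper's. The paper uses Penrose's fact that ${\cal P}(3)$ counts edge 3-colorings of a plane cubic graph, together with Theorem \ref{tait}. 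The Klein route needs neither Tait's theorem nor cubicity, so your opening appeal to Theorem \ref{tait} can be dropped.

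Your induction is a correct expansion of the paper's one-line appeal to Proposition \ref{Prime}, with two small corrections:
\begin{itemize}
\item A kink is form (b) with one trivial box. It is handled by $P_2=q$, so ``both boxes nontrivial'' should not be claimed there.
\item A split-off crossingless circle does not lower the crossing number, but it only contributes a factor $q$.
\end{itemize}
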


\begin{figure}[H]
\begin{center}
\includegraphics[height=1.5 in]{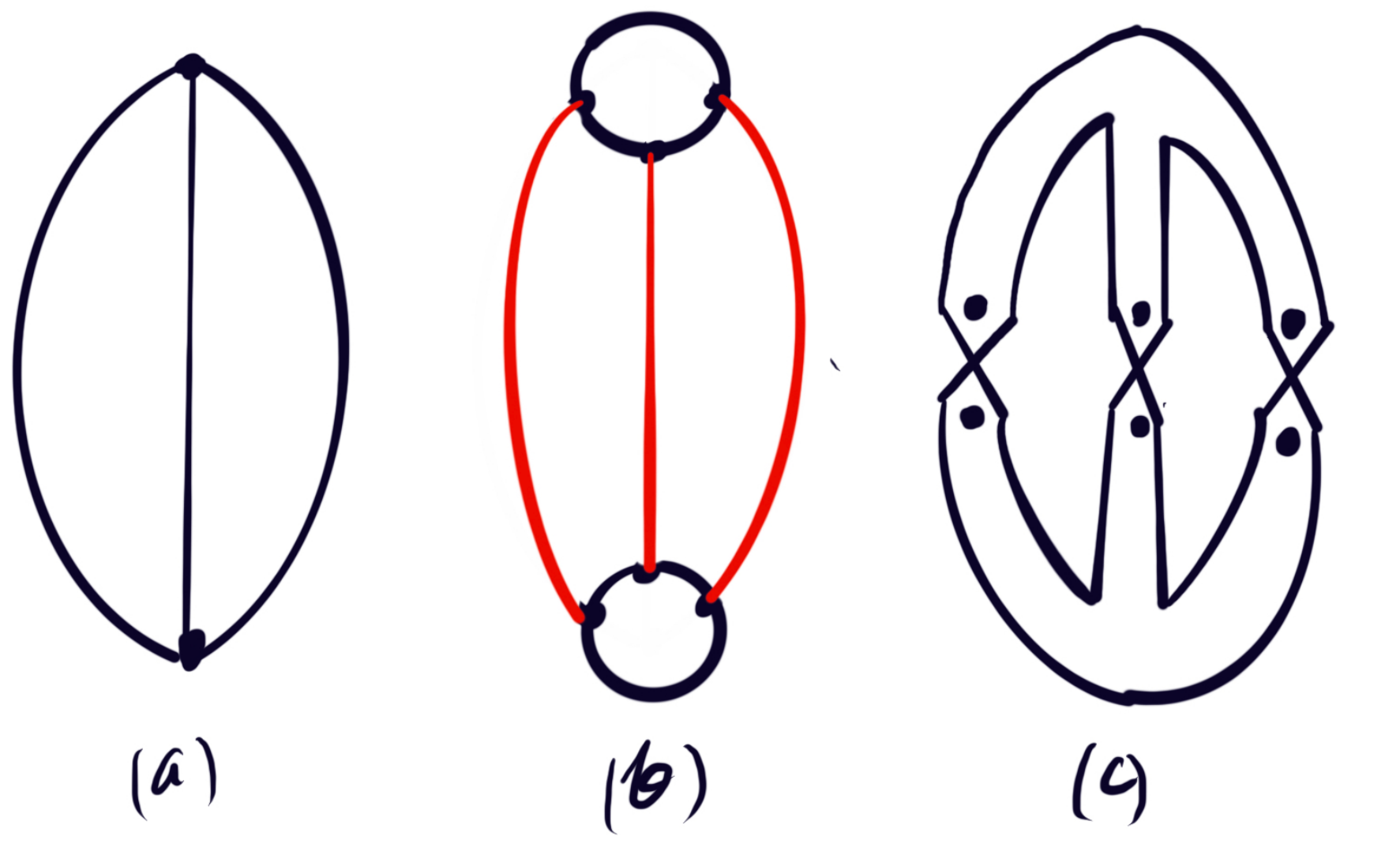}
\caption{(a) Plane graph $G$; (b) blowing up vertices of $G$ to obtain $\hat G$; (c) medial graph $M(G)$}
\label{augment2}
\end{center}
\end{figure}

\begin{remark} While every polynomial arising from Aigner's construction arises as a PK polynomial, the reverse inclusion does not hold. Figure \ref{nonaltex} displays a graph and perfect matching $(G,M)$ such that the associated link diagram is not alternating. Its PK polynomial has odd linear coefficient. Proposition \ref{linearcoef} implies that the polynomial does not arise from Aigner's construction.  \end{remark}

\section{The Smoothing Space of a Link Diagram} 

Assume that $D$ is a link diagram  in a surface $\Si$. Recall that a (smoothing) state $S$ of $D$ is the diagram obtained by smoothing any subset of crossings of $D$ as in Section \ref{PKPoly}.

\begin{definition} If $D$ is a link diagram in a surface $\Si$, its {\it smoothing space} is the metric space ${\cal S}(D)$ consisting of all  states $S$ of $D$ together with the distance function $d: {\cal S}(D) \times {\cal S}(D) \to \N$ with $d(S, S')$ equal to the number of crossings of $D$ at which $S$ and $S'$ differ.   \end{definition} 

\begin{remark} If $D$ has $n$ crossings, then the smoothing space ${\cal S(D)}$ can be identified with the vertices of the $n$-dimensional cube with unit-length edges. The distance between two states is the length of the shortest edge-path between them. See Example \ref{cube}  below. \end{remark}

Define the function $||\cdot||: {\cal S}(D) \to \N$ such that for any $S \in {\cal S}(D)$, the value $||S||$ is the number of components of the link described by $S$. 

\begin{theorem} \label{maxima} The colorable states $S$ of ${\cal S}(D)$ are the strict local maximum points of $||\cdot||$.  \end{theorem}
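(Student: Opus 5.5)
Since the smoothing space is the vertex set of a cube, the neighbors of a state $S$ are the states $S'$ with $d(S,S')=1$. These are obtained by switching a single crossing site $e\in M$: either smoothing a crossing of $S$, or restoring a crossing at a site where $S$ is smoothed. So the claim that $S$ is a strict local maximum of $||\cdot||$ means $||S'||<||S||$ for every such $S'$. The plan is to compare $||S'||$ with $||S||$ one site at a time. The main tool is the standard fact that a local change at a site alters the number of components by exactly $\pm1$ or $0$.

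\textbf{Step 1: the local lemma.} Fix a site $e$. At $e$, the state $S$ shows two arcs, either crossing or parallel (kissing). Switching $e$ replaces these two arcs by the other configuration, reconnecting the same four endpoints. I would track how the four endpoints are joined outside a small disk around $e$.

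First suppose the two arcs belong to different components $C_e\neq C_e'$. Any reconnection of the four endpoints then merges $C_e$ and $C_e'$ into one component. Indeed, outside the disk the endpoints are paired as one pair from $C_e$ and one pair from $C_e'$. The new arcs inside the disk each join an endpoint of $C_e$ to an endpoint of $C_e'$. This holds because the new configuration pairs the endpoints differently from the old one, and the only pairing not mixing the two components is the old one. Hence $||S'||=||S||-1$.

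Next suppose both arcs belong to one component $C$. The outside pairing is then the old inside pairing "shifted", and the switch either splits $C$ into two components or leaves one component. In either case $||S'||\ge ||S||$.

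The main point to check is that for the three possible pairings of four points on a circle, the change in component count is in $\{-1,0,+1\}$ and strictly negative exactly when the two old strands lie in distinct components. I would verify this by a short case analysis of the three matchings. This step is also where orientability of $\Si$ enters: it is what allows the twisted case to yield $0$ rather than a split.

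\textbf{Step 2: colorable implies strict local maximum.} If $S$ is colorable, then by Definition~\ref{kiss} every site $e$ has its two arcs in distinct components $C_e,C_e'$. By Step 1, every neighbor $S'$ satisfies $||S'||=||S||-1<||S||$, so $S$ is a strict local maximum.

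\textbf{Step 3: strict local maximum implies colorable.} Argue by contraposition. If $S$ is not colorable, some site $e$ has both arcs in the same component. Switching $e$ gives a neighbor $S'$ with $||S'||\ge||S||$ by Step 1, so $S$ is not a strict local maximum.

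I expect Step 1 to be the only real obstacle, in particular the self-kissing and self-crossing cases. The crossing/parallel distinction of the arcs must be related to how the outside strands connect, and it must be shown that no case yields a decrease when the component is the same. The cleanest approach is probably to label the four endpoints $a,b,c,d$ cyclically around the disk. The outside connections then form a fixed perfect matching of $\{a,b,c,d\}$, and one counts cycles in the union of two perfect matchings on four points. Such a union has one or two cycles, which makes all cases immediate.
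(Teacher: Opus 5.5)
Your proposal is correct and follows the paper's proof: both directions rest on the fact that switching one site lowers $||\cdot||$ by one exactly when the two strands there lie in distinct components, and otherwise splits a component or leaves the count unchanged. You justify that local fact more explicitly than the paper, by counting cycles in the union of the outside and inside perfect matchings on the four endpoints; one small correction is that orientability of $\Si$ plays no role in that count (the no-change case already occurs at a kink in a plane diagram), so drop that remark.
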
 

\begin{proof} Suppose that $S$ is a colorable state. Consider a neighborhood $N$ in $\Si$ containing a single crossing of $D$. By the definition of colorable state, the two arcs of $S \cap N$ belong to different components of $S$. If they cross, then smoothing the crossing would produce a state $S'$ in which the two components in $N$ combine. In this case, $||S|| > ||S'||$. Similarly if the two arcs do not cross, then again they must belong to different components;  reversing the smoothing  would produce a state $S'$ with $||S|| > ||S'||$. Since we have considered every state $S'$ such that $d(S', S) \le 1$, we see that $||S||$ is a strict local maximum value. 

Conversely, suppose that $S$ is a state such that $||S||$ is a strict local maximum value. The two arcs of $S \cap N$ must belong to different components of $S$ since otherwise smoothing or reverse-smoothing at the site could not produce a state $S'$ with $||S|| > ||S'||$. Hence $S$ is colorable. 
\end{proof} 

\begin{theorem} The number of colorable states of a link diagram $D$ with $n>0$ crossings is at most $2^{n-1}$. For plane diagrams it is equal to $2^{n-1}$ if and only if $D$ is the reduced diagram of the right-hand $(2, n)$ torus knot or link. \end{theorem}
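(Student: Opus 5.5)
By Theorem \ref{maxima}, the colorable states are exactly the strict local maxima of $||\cdot||$ on ${\cal S}(D)$, which we view as the vertex set of the $n$-cube $Q_n$, two states being adjacent when they differ at a single crossing. Two adjacent states cannot both be strict local maxima, so the colorable states form an independent set in $Q_n$. Since $Q_n$ is a connected $n$-regular bipartite graph with both parts of size $2^{n-1}$, every independent set has at most $2^{n-1}$ elements. Equality holds only for one of the two parity classes: a regular bipartite graph has a perfect matching, and connectivity then forces an independent set of size $2^{n-1}$ to be a full part. This proves the bound. It also shows that, in the equality case, the colorable states form exactly one parity class $A$ of ${\cal S}(D)$, where parity means the parity of the number of smoothed crossings.

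For the plane case, first note that in $\S^2$ changing the smoothing at a single crossing changes the number of components by exactly $\pm 1$. A change of $0$ would require a nonorientable band move, which cannot occur for diagrams in an orientable planar surface. Now suppose equality holds. Every state in the complementary class $B$ has all of its neighbors in $A$, and each such neighbor is a strict local maximum. Hence every state of $B$ is a strict local minimum. Because $Q_n$ is connected and steps change $||\cdot||$ by $\pm1$, the function takes a constant value $a$ on $A$ and the value $a-1$ on $B$. In particular, every state at distance $2$ from a colorable state $S$ has exactly $a$ components.

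I then analyze a colorable state $S\in A$ through its contacts, that is, its crossings or kissings, each joining two distinct components. Suppose two crossings $c, c'$ involve disjoint pairs $\{C_1,C_2\}$ and $\{C_3,C_4\}$. Changing both would merge twice and give $a-2$ components, which is a contradiction. So the contact pairs pairwise intersect, and the edges of the component graph $G_S$ form a star or a triangle.
\begin{itemize}
\item In the star case with at least two distinct leaves, pick contacts $C_0C_1$ and $C_0C_2$. Changing both yields a state of $A$ whose two arcs at some crossing lie on the same component, so that state is not colorable, contradicting $A$ being the colorable class.
\item The triangle case is handled the same way.
\end{itemize}
In both cases we also use that every state of $A$ must itself be colorable. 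I expect this case analysis, together with showing that the resulting configuration forces a specific diagram, to be the main obstacle. The cleanest route I would try first is to note that applying the contradiction at every state of $A$ reduces us to the situation where every colorable state has exactly two components $C_1, C_2$ with all $n$ contacts between them. Two disjoint plane circles meeting in $n$ local contacts, where every contact can be toggled while keeping the alternating parity, must be two concentric parallel circles joined by $n$ contact sites. Undoing the smoothings then recovers a $(2,n)$ torus diagram, which is reduced since it is prime.

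Finally, the handedness and the converse are checked directly. For the right-hand reduced $(2,n)$ torus diagram, the dot convention makes each crossing smoothing produce the ``vertical'' resolution. A state smoothing $k$ crossings then has components given by the parity of $k$, namely two parallel circles kissing exactly when $k$ is odd relative to $n$. One verifies that all $2^{n-1}$ states of the appropriate parity are colorable, consistent with the right-hand trefoil computation $P(q)=4q(q-1)$ earlier. For the left-hand diagram, smoothing produces the ``horizontal'' resolution, and Example \ref{trefoils} shows that there are far fewer colorable states, so that diagram is excluded.
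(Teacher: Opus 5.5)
Your upper bound is the paper's argument. Your sharpening, that equality forces the colorable states to form a full parity class $A$ of the cube, is correct on any surface.

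\textbf{The $\pm1$ step.} Your claim that in $\S^2$ toggling one site always changes $||\cdot||$ by $\pm 1$ is false. Orientability of the surface is irrelevant; what matters is whether the prescribed smoothing of a self-crossing respects an orientation of the curve. Take the one-crossing figure-eight diagram with the dots in the two lobes: both $D$ and $S_0$ have one component. The conclusion you want survives for a different reason. Every state of $A$ is colorable, so toggling any site of it merges two distinct components. Hence every cube edge lowers $||\cdot||$ by exactly one going from $A$ to $B$, and $||\cdot||$ equals $a$ on $A$ and $a-1$ on $B$. Your pairwise-intersecting contact argument then correctly shows that all contacts of a colorable state join a single pair of components. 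This needs $D$ connected, so that no component is contact-free, which the paper also assumes. It is a different route to the paper's $c=2$, which the paper gets by exhibiting a one-component state.

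\textbf{The genuine gap.} Planarity is never actually used, and everything you prove up to this point holds on any surface. There it does not force a torus-link diagram. A meridian and a longitude on the torus crossing once give a one-crossing diagram with exactly $1=2^0$ colorable state, namely $D$ itself: two components with one contact, exactly your intermediate conclusion. Your final step speaks of ``two disjoint plane circles.'' That presupposes a crossingless state, i.e.\ $S_0$, lies in $A$. Nothing you have shown excludes $A$ being the class of states with an odd number of crossings, which is precisely what happens in the torus example.

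The missing idea is the heart of the paper's proof. In a colorable state no component crosses itself, so each component is a simple closed curve. Two distinct simple closed curves in $\S^2$ cross an even number of times. Hence every colorable state has an even number of crossings, $A$ is the even class, and $S_0\in A$. Only then do you know that $S_0$ consists of two disjoint circles with all $n$ sites kissing between them. Those sites necessarily lie in the annulus the circles cobound, so undoing the smoothings gives the reduced $(2,n)$ diagram, with handedness fixed by the dot convention. As written, the identification of $D$ is asserted rather than proved.
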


\begin{proof} It suffices to show this for connected $D$. By Theorem \ref{maxima} the colorable states form a set of vertices in the cube ${\cal S}(D)$ that have distance greater than one. Such a set has cardinality no greater than $2^{n-1}$. 

If $D$ is a plane graph, every colorable state $S$ consists of a set of circles embedded in the plane. Any two circles cross at an even number of points, so colorable states have an even number of crossings. If $2^{n-1}$ states are colorable, then conversely, all states with an even number of crossings are colorable. 

Now applying Theorem \ref{maxima}, we see that if $||S_0||=c$, then all states with an even number of crossings have $c$ components, while those with an odd number have $c-1$ components. But we can easily produce a state with just one component by successively adding crossings to connect the components of $S_0$. Hence $c=2$ and the state $S_0$ consists of two circles that kiss $n$ times. Then $D$ is a reduced $n$-crossing diagram of the right-hand $(2,n)$-torus knot or link.

\end{proof} 

\begin{example} 
A diagram $D$ of a 2-component link in the torus appears in Figure \ref{cube}(a). Its crossings are ordered. The eight possible states $S$ of $D$ can be encoded by elements of $\{0,1\}^3$. Here the coefficients of the triple are ordered as the crossings of the diagram. The symbol $0$ indicates that the crossing is smoothed while $1$ indicates that it is left as a crossing. For example, the symbol $(1,0, 1)$ indicates that only crossing $ii$ is smoothed.  The reader can verify the values $||S||$ placed at the vertices of the 3-dimensional cube in Figure \ref{cube}(b). We note that there are exactly two states at which $||\cdot||$ achieves a local strict maximal value. For each of them the contribution to the PK polynomial $P(q)$ of the diagram is $q(q-1)^2$. Hence $P(q) = 2 q(q-1)^2$.

\begin{figure}[H]
\begin{center}
\includegraphics[height=1.8 in]{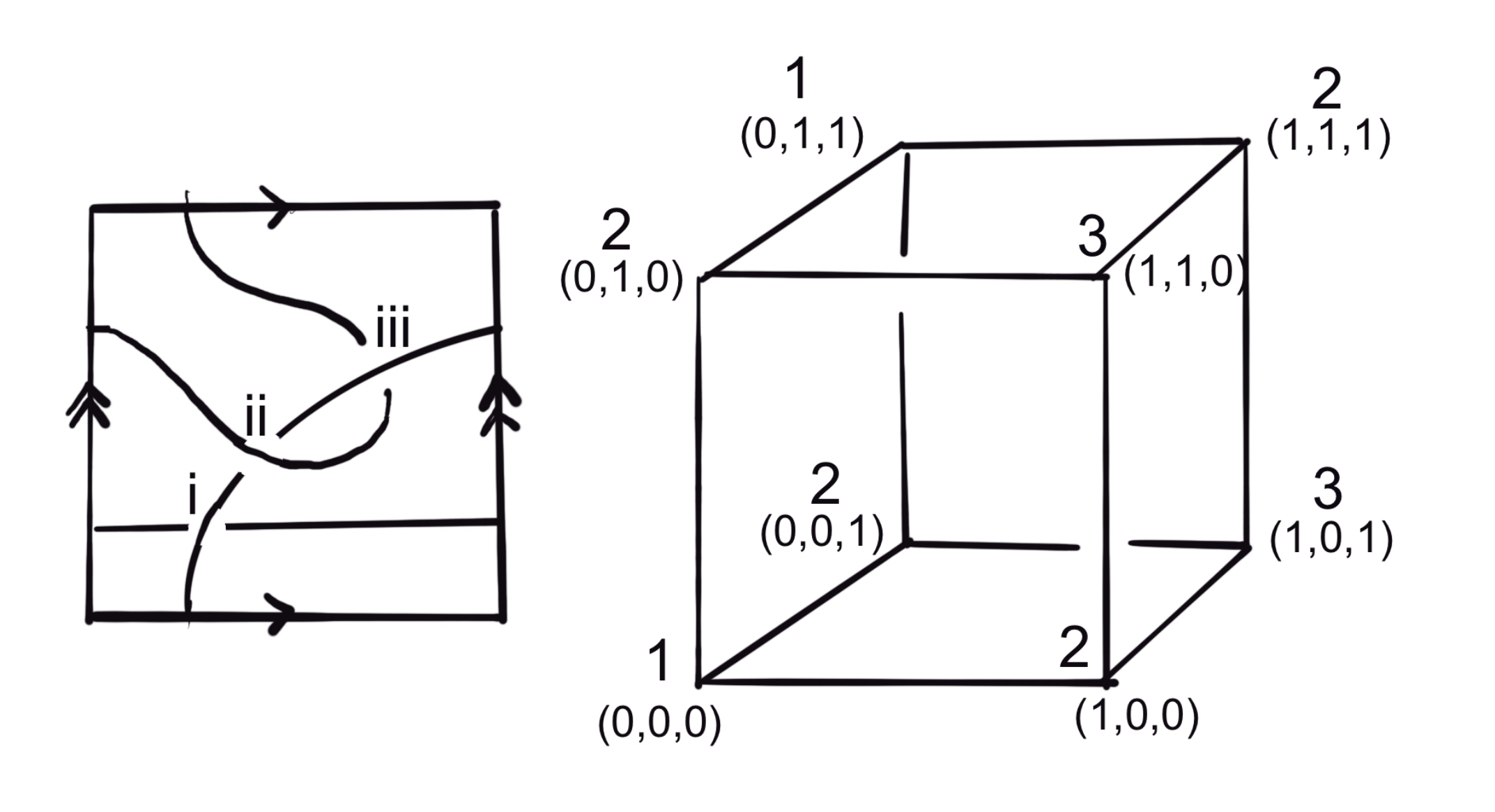}
\caption{Link diagram $D$ in the torus (left) ; states of $D$  (right) }
\label{cube}
\end{center}
\end{figure}

\end{example}


 \section{Alternating link diagrams}  
 
An alternating link diagram $D$ can be checkerboard shaded so the regions containing dots, the {\it dotted regions of $D$}, are the shaded regions. We will refer to the regions without dots as {\it undotted regions}.\bs

\begin{definition} The {\it Tait graph} of an alternating plane link diagram is the graph with vertices corresponding to dotted regions, with an edge through each crossing, joining the vertices of the corresponding regions. 
The {\it dual Tait graph} is the dual graph with vertices corresponding to undotted regions of $D$. \end{definition}

For an alternating plane link diagram $D$, the {\it all-smoothed state} $S_0$ consists of disjointly embedded circles corresponding to the undotted regions, and hence to the vertices of the dual Tait graph $T$. Thus $S_0$ is colorable. 

The following proposition should be compared with Corollary 3 of \cite{aigner}.

\begin{prop}\cite{aigner} \label{leading} Assume that $D$ is a semi-reduced alternating link plane diagram and $T$ its dual Tait graph. Then the degree of its PK polynomial $P(q)$ is $||S_0||$.  Moreover, if $T$ is simple, then $P(q)$ is monic. \end{prop}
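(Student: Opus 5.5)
By Theorem \ref{PK}, $P(q)=\sum_S \chi(G_S,q)$, where $S$ runs over the colorable states of $D$. For each colorable $S$, $\chi(G_S,q)$ is a monic polynomial of degree $|V(G_S)|=||S||$ with positive leading coefficient. Hence no cancellation can occur in the top degree, and
\[
\deg P(q)=\max\{\,||S|| : S \text{ colorable}\,\},
\]
with leading coefficient equal to the number of colorable states attaining this maximum. The plan is therefore to show two things: every state $S$ of $D$ satisfies $||S||\le ||S_0||$, and the all-smoothed state $S_0$ is colorable (under the hypotheses of the second claim, the unique colorable state achieving the maximum).

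\emph{Bound on components.} I would argue by induction on the number $k$ of unsmoothed crossings of $S$, i.e.\ $k=d(S,S_0)$. For an alternating plane diagram, $S_0$ is a collection of disjoint embedded circles bounding the undotted regions. Undoing one smoothing at a crossing joins two arcs, and I claim this merges two components. The cleanest route is the standard state-circle/Euler characteristic argument. Consider the surface built from disks bounded by the circles of $S_0$ together with half-twisted bands at the crossings that remain unsmoothed in $S$. The boundary components of this ribbon surface are exactly the components of $S$. The surface is planar, being a neighborhood of the subgraph of the dual Tait graph $T$ spanned by the unsmoothed crossings. So the number of boundary components is at most $V-E+(\text{number of independent cycles})\cdot(\ldots)$, but more simply: each added band either merges two boundary circles or splits one. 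Splitting one circle would require that circle to cross itself at the band, i.e.\ that the band joins a boundary circle to itself in an orientation-preserving way. For the checkerboard-compatible bands of an alternating diagram this cannot happen in the plane. Hence each band decreases the component count by one or changes it by a bounded amount, and a careful count yields $||S||\le ||S_0||-k+2(\text{cycle rank})$.

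This is the main obstacle: the naive inequality fails in general, and the statement that $||S||\le||S_0||$ requires the identity $||S||=||S_0||-\operatorname{rank}$ or similar. Instead I would use the cleaner fact already exploited in the proof of the $2^{n-1}$ theorem: colorable plane states have an even number of crossings, and each single change alters $||\cdot||$ by exactly $\pm1$ (one arc pair merges or splits). I would then show directly that in $S$ a component count exceeding $||S_0||$ is impossible via Euler's formula on the plane graph obtained from $T$. Namely, $||S||$ equals the number of boundary components of a planar ribbon graph with $V=||S_0||$ vertices and $k$ edges, hence is at most $V-k+2\cdot(\text{faces})$ minus corrections. I expect to need a genuine lemma here, and I would try proving $||S||\le ||S_0||$ by showing that the ribbon surface is orientable-with-twists, so the Euler characteristic $V-k$ satisfies $||S||\le V-k+(\text{number of cycles})\le V$.

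\emph{Colorability of $S_0$ and monicity.} Semi-reducedness means no crossing has the form of Figure \ref{Reduced}(a), equivalently $T$ has no loops. So the two arcs at each smoothed crossing lie on distinct circles of $S_0$, and $S_0$ is colorable, with $G_{S_0}$ the simplification of $T$. This gives $\deg P=||S_0||$. For monicity when $T$ is simple, I would show that equality $||S||=||S_0||$ with $S\ne S_0$ forces the bands in $S$ to include a pair of parallel edges of $T$, which $T$ simple excludes. Then $S_0$ is the only maximal colorable state, and the leading coefficient is $1$.
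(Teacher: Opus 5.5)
Your reduction is fine. Since each $\chi(G_S,q)$ is monic of degree $||S||$, the degree of $P(q)$ is the largest $||S||$ over colorable states, and the leading coefficient counts the colorable states attaining it. Your remark that semi-reducedness means $T$ has no loops is also correct, so $S_0$ is colorable with $G_{S_0}$ the simplification of $T$; the paper only asserts this in passing.

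There is a genuine gap, however. The real content of the proposition is the bound $||S||\le||S_0||$ for \emph{every} state, together with its equality case, and you establish neither. Each argument you sketch fails:
\begin{itemize}
\item A band of an alternating diagram \emph{can} split a boundary circle. Take $T$ to be two vertices joined by two edges (the $2$-crossing Hopf diagram). Restoring one crossing of $S_0$ leaves one component; restoring the second gives the Hopf link with two.
\item A single change need not alter $||\cdot||$ by $\pm1$. For $T$ a triangle, the state with two crossings restored and the full trefoil diagram both have one component. The paper uses the $\pm1$ property only under special hypotheses such as the Eulerian one.
\item Let $H\subseteq T$ be the spanning subgraph on the $k$ restored crossings. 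Your inequality $||S||\le V-k+\beta(H)$ is false for the Hopf state: $||S||=2$ but $V-k+\beta(H)=1$.
\item The valid Euler-characteristic bound $||S||\le V-k+2\beta(H)$ is too weak. For three parallel edges with all crossings restored it gives $2-3+4=3>2=V$.
\end{itemize}
Euler characteristic alone does not capture the planarity constraint that makes the bound true. Likewise, for monicity, the claim that $||S||=||S_0||$ with $S\ne S_0$ forces parallel edges in $H$ is exactly the equality case of the missing lemma, and you give no argument for it.

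The idea you need is the paper's linear-algebraic count. The components of $S$ are the left-right walks of the plane graph $H$. By the Silver--Williams result the paper cites (equivalently, the bicycle-space theorem for plane graphs), their number equals the nullity over $\mathbb{F}_2$ of the Laplacian $L_H$. This is a square matrix of size $|V(T)|=||S_0||$, so $||S||\le||S_0||$ follows at once. Equality holds if and only if $L_H\equiv 0 \pmod 2$. Because $T$ is loopless, this means every pair of vertices is joined by an even number of edges of $H$. If $T$ is simple, this forces $H$ to have no edges. Hence $S_0$ is the only state with $||S_0||$ components, and $P(q)$ is monic.
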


\begin{proof}  By Proposition \ref{PK},  each colorable state $S$ of $D$ contributes to $P(q)$ the chromatic polynomial of the component graph $G_S$. The degree of this polynomial is the number $||S||$ of components of $S$. Hence for the first assertion it suffices to show that no state of $D$ has more components than $S_0$. 

Any state $S$ is obtained from $S_0$  by adding back some of the crossings of $D$. 
Thus the components of $S$ correspond to left-right walks on a subgraph of the dual Tait graph that omits a subset of edges. As in \cite{sw}, the number of left-right walks is equal to the nullity of the mod-2 Laplacian matrix. Since the nullity cannot exceed the size $||S_0||$ of the matrix, the proof of the first assertion is complete. 

If $T$ is simple, then the Laplacian matrix of $S$ is already reduced modulo 2. No other state has zero Laplacian matrix, so no other state contributes a term of degree $||S_0||$. Since the chromatic polynomial is monic, the second assertion of Proposition \ref{leading} follows.  \end{proof} 

For $D, T$ as above, let $\bar T$ be a simple graph obtained from $T$ by removing all but one edge between each pair $v, w$ of joined vertices. (The graph $\bar T$ is not necessarily unique.) Let $r$ be the number of removed edges, so $r = \sum (r(v,w)-1)$, where $r(v,w)$ is the number of edges in $T$ joining $v$ and $w$.  Let $\bar D$ be the link diagram with dual Tait graph $\bar T$.

We prove another result that appears in a different form in \cite{aigner}. 

\begin{theorem}\cite{aigner} \label{factor} Let $D$ be a semi-reduced alternating plane link diagram, and $\bar D, r$ as described above. The PK polynomials $P(q),\bar P(q)$ of $D$ and $\bar D$ satisfy 
$$P(q)= 2^r \bar P(q)$$. \end{theorem}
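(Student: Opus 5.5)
The plan is to argue locally, one group of parallel edges of $T$ at a time, and induct on $r$. It suffices to show the following. If $v,w$ are undotted regions whose dual Tait vertices are joined by $k\ge 2$ edges, then deleting one of these edges multiplies the PK polynomial by $1/2$. Each deletion lowers $r$ by one, so iterating gives $P(q)=2^r\bar P(q)$.

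Deleting an edge of $T$ means removing a crossing. We smooth it in the undotted direction, so that the two undotted regions $v,w$ are no longer separated there. This keeps the diagram alternating, and since $\bar T$ still has $v,w$ adjacent, the diagram stays semi-reduced.

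The $k$ parallel edges correspond to $k$ crossings $c_1,\dots,c_k$. Each $c_i$ separates $v$ from $w$, and between consecutive ones lie dotted regions. Pick two of them, $c_1,c_2$. Their edges in $T$ form a 2-cycle, which separates the plane. So the diagram decomposes as a "tangle sum": the part $A$ inside the 2-cycle, the part $B$ outside, and the two crossings $c_1,c_2$ in between. The plan is to compare colorable states of $D$ according to the status of $c_1,c_2$ (smoothed $0$ or crossed $1$):
\begin{itemize}
\item States with $(0,0)$. The arcs at $c_1$ and $c_2$ both lie on the components bounding $v$ and $w$ in that region.
\item States with $(1,1)$. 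Crossing at both points reconnects the same two strands. The components are the same as in $(0,0)$, except that the two components through $v$ and $w$ have their arcs swapped between the $A$-side and $B$-side pieces. The crossing pattern is the same for the pair, so the component graph is the same.
\item States with $(1,0)$ or $(0,1)$. These merge the two strands into one component, which then self-crosses or self-kisses at the other site. So they are not colorable, and by Theorem \ref{maxima} they are not strict local maxima.
\end{itemize}

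The key step is then a bijection between $\{(0,0),(1,1)\}$-colorable states of $D$ and colorable states of $D'$, where $D'$ is $D$ with $c_2$ removed (smoothed). Under this bijection each colorable state of $D'$ corresponds to exactly two colorable states of $D$ with isomorphic component graphs. Together with Theorem \ref{PK}, this gives $P_D=2P_{D'}$.

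Concretely, I would check the following for a colorable state $S$ of $D$ with $c_1,c_2$ at $(0,0)$:
\begin{enumerate}
\item The corresponding state of $D'$ (same choices, with $c_2$ now absent) has the same components.
\item The $(1,1)$ state obtained by flipping both crossings has the same number of components.
\item In both cases the pairs of components that cross or kiss coincide with those in $D'$. The pair kissing at $c_1,c_2$ still kisses at $c_1$, or crosses there, so the edge of the component graph survives.
\end{enumerate}
Conversely, every colorable state of $D'$ arises this way.

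I expect the main obstacle to be the second item: verifying that flipping both $c_1$ and $c_2$ really preserves the component structure and colorability. This needs the planar separation by the 2-cycle. With that separation, the double flip amounts to a "rotation" of tangle $A$ that exchanges which outer strand continues to which, while keeping two distinct components. I would handle it by tracing the four endpoints of tangle $A$ on the boundary circle and using the alternating, planar structure to see which endpoints are connected inside $A$ in a colorable state. The relevant endpoints must connect $v$-side to $v$-side, since otherwise the $(0,0)$ state would already have a self-kiss. I would also double check that the degenerate case where $A$ is trivial, i.e.\ a bigon, behaves the same way, since then the $(1,1)$ state is a Reidemeister II configuration.
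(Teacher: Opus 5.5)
\textbf{Main gap: the $(1,0)$ and $(0,1)$ states.} Your claim that states with $(c_1,c_2)=(1,0)$ or $(0,1)$ are never colorable is false. As a result, your map sends both the $(0,0)$ and the $(1,1)$ state to the state of $D'$ with $c_1$ smoothed, and so never reaches the colorable states of $D'$ in which $c_1$ stays crossed.

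The right-hand trefoil of Example \ref{trefoils} already shows this. There $T$ is two vertices joined by three edges, and the four colorable states are exactly those with an even number of crossings, including $(1,0,1)$ and $(0,1,1)$. With $D'$ the Hopf diagram on $c_1,c_3$, your accounting gives $P_D=2q(q-1)$ and $P_{D'}=q(q-1)$, whereas the true values are $4q(q-1)$ and $2q(q-1)$.

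A third parallel crossing is not the cause. Let $T$ be a $4$-cycle with one edge doubled. Keep $c_1$ crossed, smooth $c_2$, and keep the other three crossings. This gives the $(2,4)$ torus link diagram plus a kiss between its two components, which is colorable.

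The underlying reason is that whether the strands through $c_1,c_2$ close into one or two components depends on how the tangles $A$ and $B$ pair their endpoints. That pairing is decided by the other crossings. Toggling one of $c_1,c_2$ switches the answer, so for fixed choices elsewhere the two-component pair is either $\{(0,0),(1,1)\}$ or $\{(1,0),(0,1)\}$.

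\textbf{How to repair it.} The states of $D'$ are precisely the states of $D$ with $c_2$ smoothed. Pair each one with its double flip (toggle both $c_1$ and $c_2$), which matches $(0,0)$ with $(1,1)$ and $(1,0)$ with $(0,1)$. You then must show that the double flip preserves colorability and the number of $n$-colorings for \emph{every} state. You also need that a state with $c_2$ smoothed is colorable in $D$ iff it is colorable in $D'$; this holds because, once the arcs at $c_1$ lie on distinct components, those same two components kiss at $c_2$. This is the paper's parity map $f$ restricted to one pair of parallel edges. The paper gets invariance of $||S||$ and of colorability from the mod-2 Laplacian; your endpoint analysis of $A$ and $B$ could give it directly.

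\textbf{Second gap: the component graph is not preserved.} Contrary to your $(1,1)$ bullet and item 3, the double flip does not preserve the component graph up to isomorphism. It reattaches the $B$-side arcs of the two components through $c_1,c_2$ to the opposite $A$-side arcs, which is a Whitney twist. For example, let $T$ consist of $v,w$ joined by two edges, plus a pendant vertex $x$ at $v$ inside the $2$-cycle and a pendant vertex $y$ at $v$ outside it. The component graph of $S_0$ is a claw centred at the circle around $v$, but after flipping $c_1,c_2$ it is a path on four vertices. Only the chromatic polynomial survives. You need an argument for that, such as the paper's exchange of the two colors inside the disk bounded by $e_1\cup e_2$.

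\textbf{Minor slip: the wrong smoothing.} Your definition of $D'$ uses the wrong smoothing. Smoothing so that $v$ and $w$ are ``no longer separated'' contracts the edge of $T$, merging $v$ and $w$. That turns the remaining parallel edges into loops, i.e.\ nugatory crossings as in Figure \ref{Reduced}(a), so the polynomial becomes $0$. Deleting an edge of $T$ corresponds to the ordinary state smoothing (both dots in one region), which is what your later steps tacitly use.
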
 

\begin{proof} We define a function $f$ from the states of $D$ to states of $\bar D$ by reducing modulo 2 the number of edges between any pair of vertices of $T$. The mod-2 Laplacian matrix of $S$ is the Laplacian matrix of $f(S)$, and so $S$ and $(S)$ have the same number of components. The function $f$ is $2^r$-to-$1$, since for each pair $(v,w)$ of vertices of $T$ there are $2^{r(v,w)-1}$ ways to remove an even subset of edges between them. 

By Theorem \ref{maxima}, a state $S$ of $D$ is colorable if and only if $||S||$ is a strict local maximum value of $|| \cdot ||$. This is equivalent to the condition that the nullity of the mod-2 Laplacian matrix decreases if we add or remove the contribution of any edge. But that is equivalent to the same condition on the mod-2 Laplacian matrix of $f(S)$. Hence $S$ is colorable if and only if $f(S)$ is. 

Finally, we claim that chromatic polynomial of the component graph of $S$ is the same as that of $f(S)$. For this, it suffices to show that if a state $S'$ is obtained from $S$ by removing a pair of edges $e_1, e_2$ between vertices $v, w$, then $S$ and $S'$ have the same number of $n$-colorings for every positive integer $n$ (see Definition \ref{ncoloring}). 

The edges $e_1, e_2$ bound a disk $\Delta$ in the plane. Suppose that $S$, and hence $S'$, are colorable, and consider any $n$-coloring of $S'$ as shown in Figure \ref{Colorable}. The colors $a, b, c, d$ must satisfy $a \ne b$ and $c \ne d$. A component that enters $\Delta$ must leave it, so either $a=c$ and $b =d$ or else $a=d$ and $b=c$. For any $n$-coloring of $S$, we obtain a corresponding $n$-coloring of $S'$ by exchanging the colors $a$ and $b$ inside $\Delta$ (which automatically exchanges $c$ and $d$). This exchange can have no effect on the diagram outside $\Delta$, and it gives a bijection of $n$-colorings of $S$ and $S'$. \end{proof} 

\begin{figure}[H]
\begin{center}
\includegraphics[height= 1.5 in]{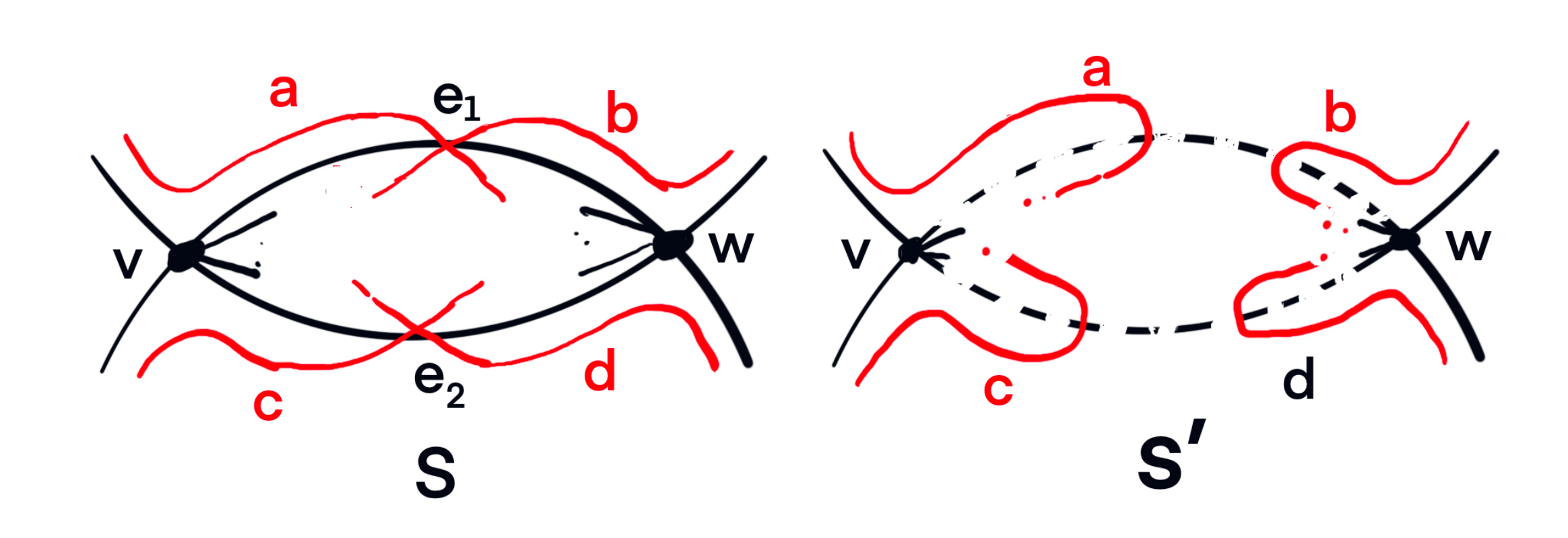}
\caption{Obtaining an $n$-coloring of $S'$ from $S$}
\label{Colorable}
\end{center}
\end{figure}

We see from the argument above that the $2^r$-to-$1$ function $f$ from states of $D$ to states of $\bar D$ also preserves the number of components of the state. This yields:

\begin{cor} For a semi-reduced alternating plane diagram $D$, the number of colorable states with a fixed number of components is a multiple of $2^r$. \end{cor}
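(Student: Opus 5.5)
The plan is to read the corollary off the map $f$ constructed in the proof of Theorem \ref{factor}. Fix an integer $k$ and let $\mathcal{C}_k(D)$ denote the set of colorable states $S$ of $D$ with $||S||=k$. Likewise let $\mathcal{C}_k(\bar D)$ denote the corresponding set for $\bar D$. I will show that $f$ restricts to a map $\mathcal{C}_k(D)\to\mathcal{C}_k(\bar D)$ all of whose fibers have exactly $2^r$ elements. It then follows that $|\mathcal{C}_k(D)| = 2^r\,|\mathcal{C}_k(\bar D)|$, which is a multiple of $2^r$.

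The key steps, in order, are as follows.

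\begin{enumerate}
\item Recall the description of $f$. A state $S$ of $D$ is determined by which edges of the dual Tait graph $T$ are "restored" as crossings. The state $f(S)$ records, for each joined pair $v,w$, the parity of the number of restored edges between $v$ and $w$. It is recorded on the single edge of $\bar T$ joining that pair, and $f(S)$ agrees with $S$ on all edges of multiplicity one.
\item Show that every fiber of $f$ on all states has size exactly $2^r$. For each pair $(v,w)$, the subsets of the $r(v,w)$ parallel edges with a prescribed parity number $2^{r(v,w)-1}$. Multiplying over the pairs gives $\prod 2^{r(v,w)-1}=2^r$, and this holds for every state of $\bar D$ in the image. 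Surjectivity onto states of $\bar D$ is clear, since each parity can be realized.
\item Invoke the facts established in the proof of Theorem \ref{factor}. First, the mod-2 Laplacian of $S$ equals that of $f(S)$, so $||S||=||f(S)||$. Second, $S$ is colorable if and only if $f(S)$ is colorable, by Theorem \ref{maxima} together with the Laplacian criterion. Consequently the entire fiber over a state of $\bar D$ lies inside $\mathcal{C}_k(D)$ exactly when that state lies in $\mathcal{C}_k(\bar D)$.
\item Conclude by counting fibers.
\end{enumerate}

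The only point needing care is the second half of step 3: colorability must be preserved in both directions, and for all $2^r$ preimages at once. The proof of Theorem \ref{factor} argues this through the local-maximum characterization. The argument runs as follows. Toggling a single parallel edge in $S$ changes the parity on the corresponding edge of $\bar T$. So the neighbors of $S$ in ${\cal S}(D)$ map onto the neighbors of $f(S)$ in ${\cal S}(\bar D)$, and this correspondence preserves component counts. Hence $S$ is a strict local maximum if and only if $f(S)$ is one. I would spell this out explicitly; the rest is bookkeeping.
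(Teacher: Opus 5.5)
Your proposal is correct and follows the paper's own argument. The paper reads the corollary off the $2^r$-to-$1$ map $f$ of Theorem~\ref{factor}, which preserves the number of components (because the mod-2 Laplacians agree) and preserves colorability (by the strict-local-maximum criterion of Theorem~\ref{maxima}), so the colorable $k$-component states of $D$ form a union of full fibers of size $2^r$; your check that $f$ carries the neighbors of $S$ onto the neighbors of $f(S)$ simply spells out the paper's remark that toggling any edge changes both mod-2 Laplacians by the same contribution.
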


\begin{example} Proposition \ref{linearcoef} requires that the diagram be alternating. The non-alternating diagram in Figure \ref{nonaltex} has three colorable states, each with three components. The PK polynomial is $P(q) = 3q^3-8q^2+5q$. 

\begin{figure}[H]
\begin{center}
\includegraphics[height=1.5 in]{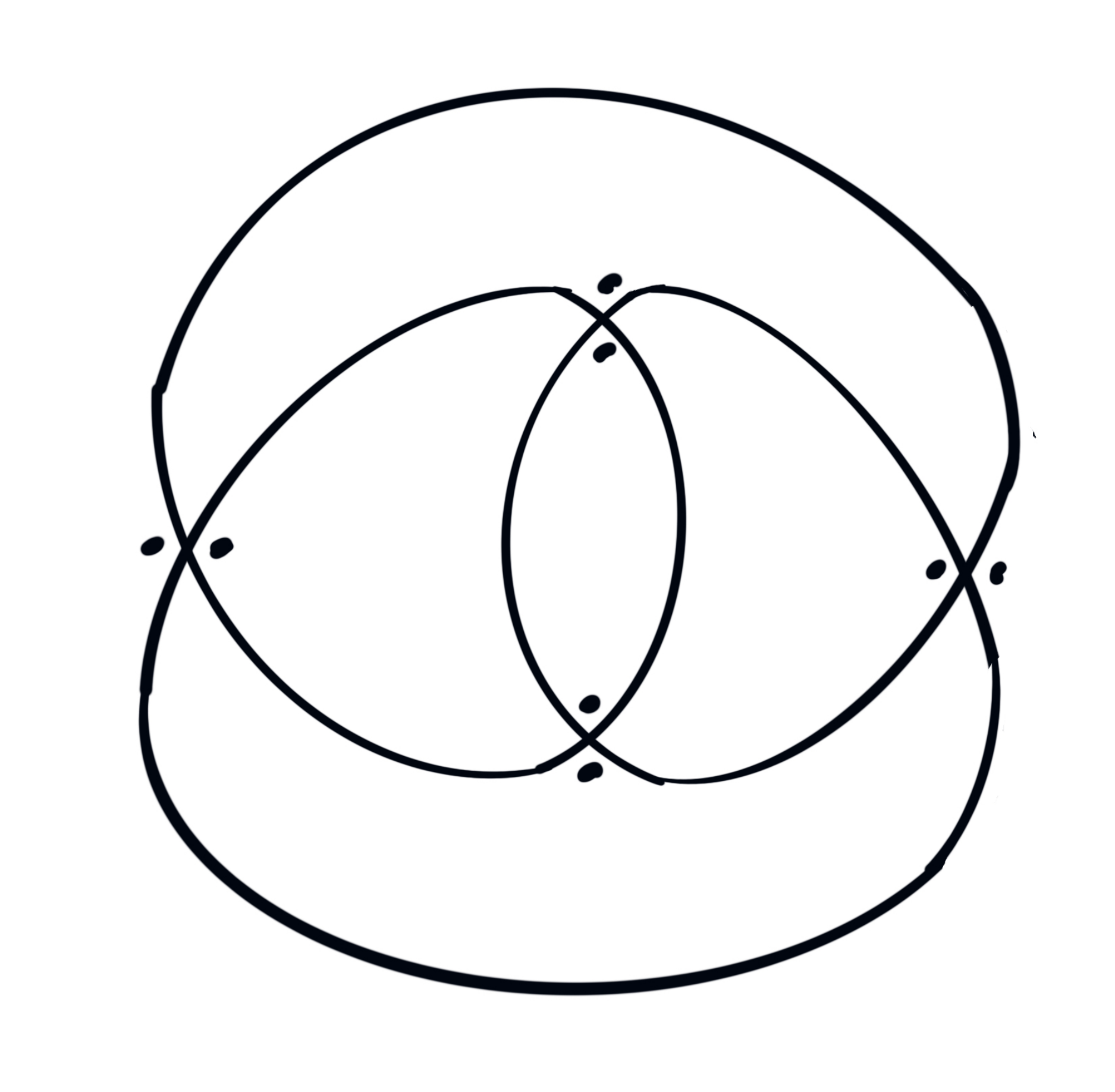}
\caption{Nonalternating diagram such that $P(q)$ has odd leading coefficient }
\label{nonaltex}
\end{center}
\end{figure}

\end{example}

Let $D$ be a semi-reduced alternating plane link diagram. If 
$D$ has a chain of (one or more) dotted bigons, they correspond to parallel edges in the dual Tait graph of $D$. If we replace each chain by a single dotted crossing, then the simplified diagram $\bar D$ in Theorem \ref{factor} remains unchanged, and so the PK polynomial is changed by a nonzero constant factor.

What can be done with undotted regions? We show that for the purpose of proving the Four Color Theorem, we can restrict our attention to reduced alternating prime link diagrams without {\sl any} bigon regions. 

Consider a semi-reduced alternating link diagram with an undotted bigon on the left of Figure \ref{F1} (left) and the diagram $D'$ on the right obtained by smoothing the two crossings of the bigon  and then deleting the resulting unknotted circle. One checks easily that $D'$ is alternating. 

\begin{figure}[H]
\begin{center}
\includegraphics[height=2.5 in]{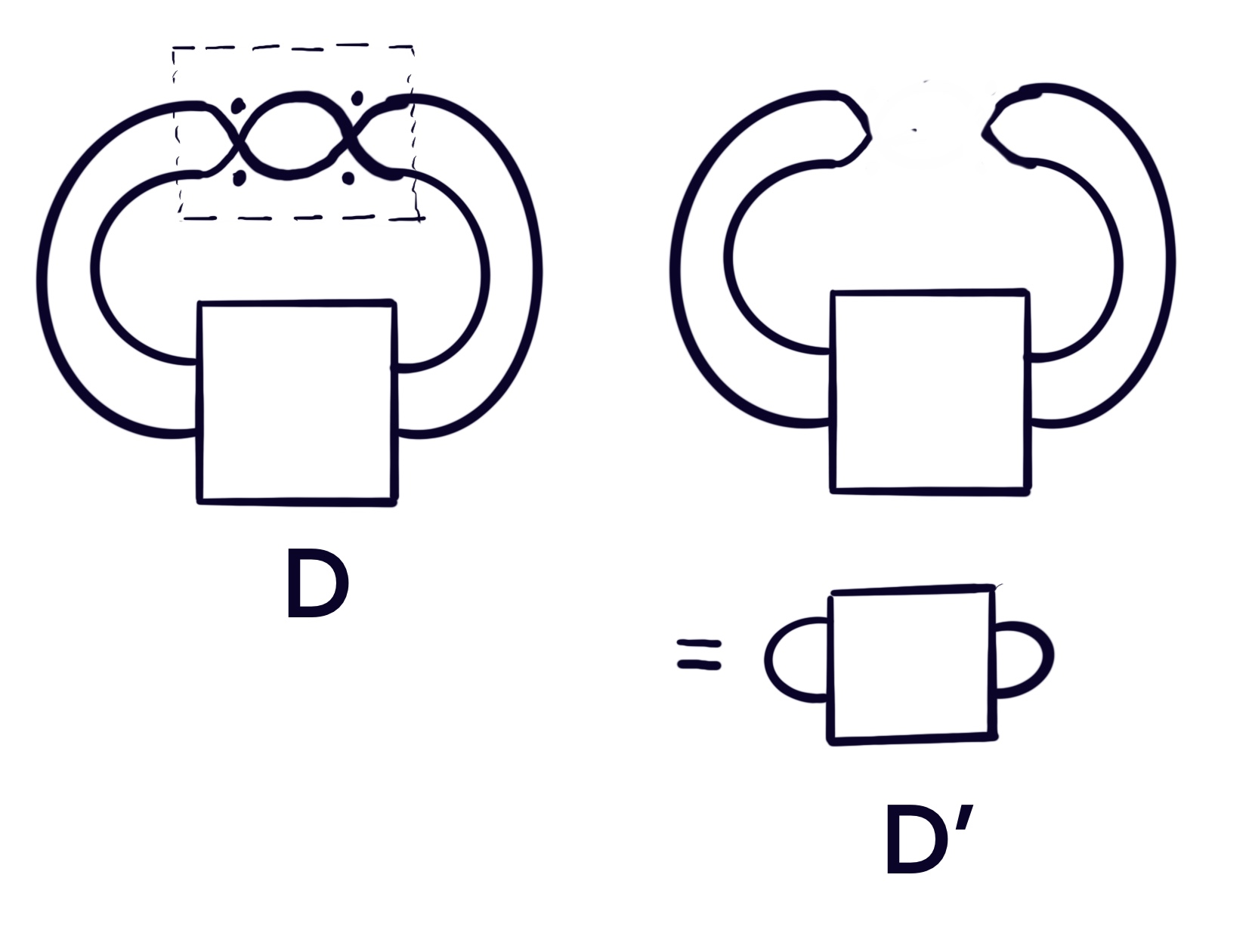}
\caption{Diagram with undotted bigon (left); after removal (right)}
\label{F1}
\end{center}
\end{figure}

The diagram $D'$ is also semi-reduced. This can be seen in the following way. If $D'$ were not semi-reduced, then it and $D$ would appear as in Figure \ref{F2}. For any alternating link diagram, shading regions containing one or more dots results in a checkerboard coloring of the diagram. If we shade those regions of $D$ we do not obtain a checkerboard coloring. Hence $D'$ must be semi-reduced.

\begin{figure}[H]
\begin{center}
\includegraphics[height=1.3 in]{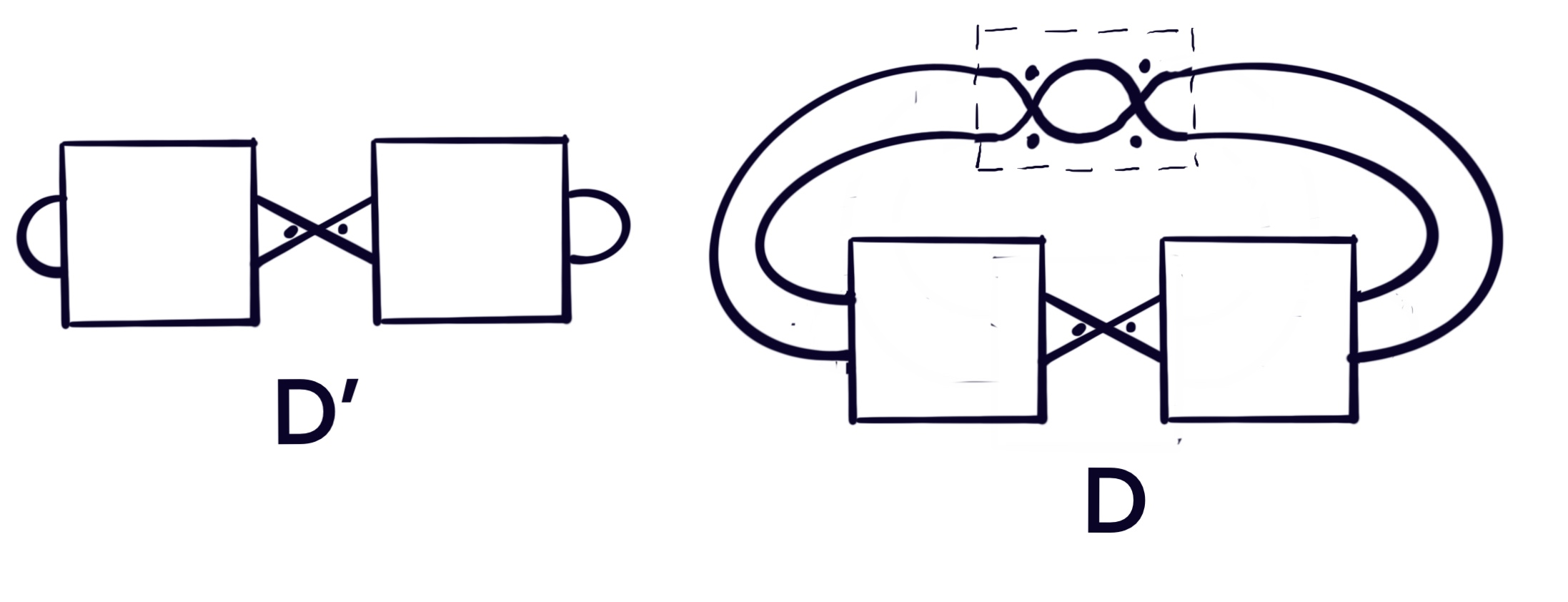}

\caption{Proof that $D'$ is semi-reduced}
\label{F2}
\end{center}
\end{figure}

Suppose that $D'$ has a 3-colorable state $S'$. Extend $S'$ to a state $S$ of $D$ by smoothing the two crossings of the bigon to create an unknot component. Any 3-coloring of $S'$ can be extended to a 3-coloring of $S$ by giving the unknot a color different from that of the two adjoining arcs. 

The above argument can be used repeatedly on undotted bigons. Removing bigons might produce new ones, but the process must terminate since the crossing number is reduced at each step.

 Hence we have: 

\begin{theorem}\label{equivalent2} The Four Color Theorem is equivalent to the statement: Every reduced, prime alternating link diagram in the plane {\sl without any bigon regions} has a 3-colorable state. \end{theorem}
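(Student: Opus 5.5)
One direction is immediate. By Theorem \ref{equivalent}, the Four Color Theorem implies that every reduced prime alternating plane diagram has a 3-colorable state. In particular this holds for those diagrams that have no bigon regions.

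For the converse, assume every reduced, prime, alternating, bigon-free plane diagram has a 3-colorable state. By Theorem \ref{equivalent}, together with Proposition \ref{Prime} and Corollary \ref{4ct}, it suffices to show that every semi-reduced alternating plane diagram $D$ has a 3-colorable state. I would argue by induction on the crossing number of $D$, with the following cases.

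\emph{Non-prime $D$.} If $D$ is a connected sum of form (b) or (c), Proposition \ref{Prime} expresses $P(3)$ as a nonzero multiple of $P_1(3)P_2(3)$. The factors $D_1, D_2$ are alternating and have fewer crossings. They are semi-reduced, since a form-(a) decomposition of a factor would give one of $D$. Induction then applies to both. For a disconnected $D$, the same argument works with the product of the polynomials of the pieces.

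\emph{Dotted bigon.} If $D$ has a dotted bigon, it corresponds to parallel edges of the dual Tait graph. Theorem \ref{factor} gives $P = 2^r\bar P$. Here $\bar D$ is alternating and semi-reduced (its dual Tait graph is $T$ with parallel edges thinned, so no cut vertex is created) and has fewer crossings. So $P(3)\neq 0$ follows from $\bar P(3)\neq 0$.

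\emph{Undotted bigon.} If $D$ has an undotted bigon, I would pass to the diagram $D'$ of Figure \ref{F1}. The discussion preceding the statement shows that $D'$ is alternating and semi-reduced. It also shows that any 3-colorable state of $D'$ extends to one of $D$, by coloring the new unknot with a third color. Induction applies since $D'$ has fewer crossings.

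\emph{Remaining case.} If none of these cases applies, $D$ is connected, prime, semi-reduced and bigon-free. It is then reduced, because form (b) with both boxes nontrivial contradicts primality. If one box is trivial, the picture contains a monogon or bigon region, which is excluded. So the hypothesis applies to $D$ directly. The base case of the induction is the diagram with no crossings, which trivially has a colorable state.

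The main obstacle I expect is verifying that each reduction preserves the hypotheses: that the diagrams produced are still alternating and semi-reduced, and that the final case really is reduced. The delicate point is the interplay of monogon and bigon faces with the form (b) condition of Figure \ref{Reduced}. A monogon region is a kink, i.e. a form-(b) sum with a trivial summand. I would handle it by noting that Proposition \ref{Prime}(2) still gives a nonzero factor $(1-q^{-1})P_1(3)\cdot 1\cdot 3$ in that case, so kinks can be stripped off inductively as well.
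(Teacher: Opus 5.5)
Your proposal is correct and follows the paper's route. You use Proposition \ref{Prime} to reduce to prime pieces, Theorem \ref{factor} to thin dotted bigons, and the diagram $D'$ of Figure \ref{F1} to remove undotted bigons, with termination by crossing number. Your induction over all semi-reduced alternating diagrams, including stripping form-(b) kinks, usefully makes explicit a re-reduction the paper leaves implicit: these moves can create kinks or non-prime diagrams (thinning the Hopf diagram, for instance, yields a kink).

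Two small repairs are needed. First, $\bar D$ is semi-reduced because $\bar T$ is loopless (a form-(a) crossing is exactly a loop in $T$), not because no cut vertex is created. Second, in the disconnected case a piece can have as many crossings as $D$ (e.g.\ when $D$ has a split crossingless circle), so treat connected diagrams first or induct on crossings plus components.
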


\section{An invariant of alternating links} 

One of Tait's famous conjectures is that given any two reduced alternating diagrams of the same prime link, one can be transformed into the other by performing a sequence of {\it flypes} (see Figure \ref{Flype1}). The conjecture was proved in 1991 by  Menasco and Thistlethwaite \cite{mt}. 

\begin{figure}[H]
\begin{center}
\includegraphics[height=.85 in]{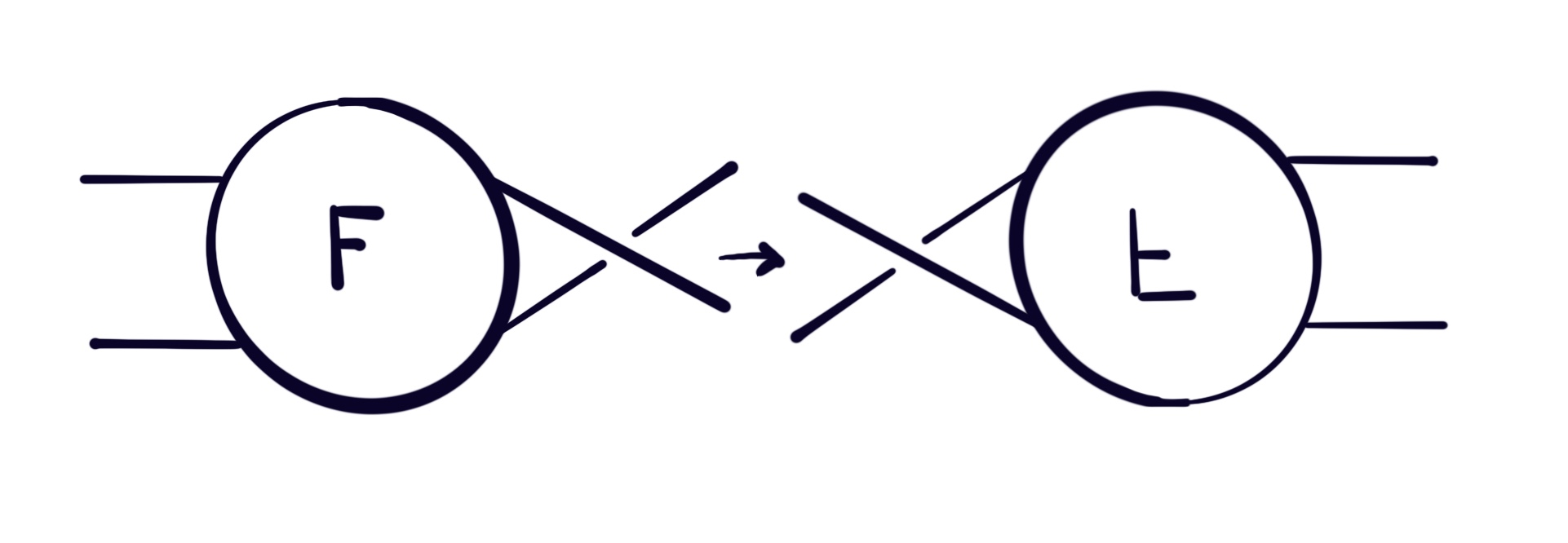}
\caption{Flype}
\label{Flype1}
\end{center}
\end{figure}

\begin{prop} \label{flype} Assume that $D$ is a link diagram in a surface. If $D'$ is obtained from $D$ by a finite sequence of flypes, then the diagrams have the same PK polynomial. 
\end{prop}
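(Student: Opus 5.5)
It suffices to treat a single flype, since the PK polynomial is then unchanged along the whole sequence. Write $D$ as in Figure \ref{Flype1}: a tangle $T$ sits in a disk $B$ whose boundary meets $D$ in four points, and a single crossing $x$ sits to one side of $B$. The flype rotates $T$ by $\pi$ about a horizontal axis and moves $x$ to the other side, producing $D'$. I will not pass through the chromatic polynomial sum of Theorem \ref{PK}. Instead I will use the definition of $P(q)$ directly: $P(n)$ counts Tait $n$-colorings, which by Theorem \ref{PK} are the $n$-colorings of colorable states. Two integral polynomials agreeing at all positive integers are equal. So it suffices to build, for each $n$, a bijection between $n$-colored colorable states of $D$ and those of $D'$.

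The bijection sends a state of $D$ to the state of $D'$ that agrees outside $B\cup\{x\}$. Inside, it uses the rotated version of the state of $T$, with the same choice (crossed or smoothed) at $x$, and it carries the colors along. First I check that the rotation is compatible with the dot convention of Figure \ref{2dots}. Rotating a crossing by $\pi$ about an axis in the plane reverses the over/under information and also reflects the picture. The net effect preserves which pair of opposite quadrants carries the dots, so a smoothing inside $T$ maps to the corresponding smoothing of the rotated tangle. Likewise, moving $x$ across $B$ as in the figure keeps its dot pattern, as one reads off from the picture. It follows that states of $D$ and of $D'$ correspond bijectively, with the state inside $B$ transported rigidly.

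Next I compare connectivity and colors at the four boundary points of $B$. A state of $T$ together with $x$ (crossed or smoothed) forms a tangle $\tau$ with four endpoints on the boundary of the larger disk $B'\supset B\cup\{x\}$. The flype replaces $\tau$ by a tangle $\tau'$ with the same endpoints. I claim $\tau$ and $\tau'$ pair up the four endpoints in the same way, and that the internal closed components, and the crossings and kisses among arcs inside $B'$, correspond. This is the key point. A flype is an isotopy of the tangle in the thickened disk that fixes the boundary. In terms of arc pairings, the rotation by $\pi$ swaps the endpoints $NE\leftrightarrow SE$ and $NW\leftrightarrow SW$, and moving $x$ across compensates exactly for this swap. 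Checking this case by case for the three possible pairings of the four endpoints (the two smoothings and the crossing at $x$ combined with each pairing from $T$) is the main obstacle. I would do it by drawing the three cases explicitly, in the spirit of Figure \ref{Colorable}.

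Given the claim, a state $S$ of $D$ and its image $S'$ have corresponding link components. Corresponding pairs of components cross or kiss in the same way, so $S$ is colorable if and only if $S'$ is. Colors transfer by reading them on the components, giving the required bijection of $n$-colorings. Hence $P_D(n)=P_{D'}(n)$ for all $n$, and $P_D=P_{D'}$.
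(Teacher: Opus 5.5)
Most of your argument is correct: the reduction to one flype and to counting $n$-colorings, the dot-convention check, and the claim that $\tau$ and $\tau'$ pair the four endpoints the same way. When $x$ is kept as a crossing, your isotopy argument does match components, since each outer endpoint is joined to the same end of $\sigma$ (respectively its image in the flipped state $\sigma^h$).

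The gap is the case where $x$ is smoothed. The smoothing does not travel with the flype isotopy, and your final claim that corresponding components ``cross or kiss in the same way'' is false. Since $x$ and $x'$ look alike, for one of the two crossing types the dot-smoothing of $x$ joins the outer NW and SW ends to the NW and SW ends of $B$. The dot-smoothing of $x'$ joins the NE and SE ends of $\sigma^h$ to the outer NE and SE ends. So in $S$ the component through the outer NW end enters $\sigma$ at its NW end. In $S'$ it enters $\sigma^h$ at the image of the SW end of $\sigma$. The strands of $\sigma$ are therefore exchanged between the through-components; in effect $S'$ is $S$ with $\sigma$ mutated.

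Here is a concrete instance. Let $\sigma$ consist of a strand $a$ from NW to NE, a strand $b$ from SW to SE, and a loop $L$ crossing $a$ twice. Outside $B'$, join NW to NE by an arc kissing two circles $E_1,E_2$ that kiss each other, and join SW to SE by an arc meeting nothing. Then $G_S$ has degree sequence $(4,2,2,1,1)$ while $G_{S'}$ has $(3,2,2,2,1)$. So no bijection of components preserves crossing and kissing, and your planned case check on endpoint pairings cannot detect this. The paper's one-line proof asserts $G_S\cong G_{S'}$ for the same correspondence and has the same defect. In this example the two graphs merely share the chromatic polynomial $q(q-1)^3(q-2)$.

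What is true, and all you need, is that $S$ and $S'$ have the same number of $n$-colorings for each $n$. Prove this by conditioning on the colours of the four arcs meeting $\partial B'$, rather than by matching components. The count for $S$ is $\sum F_{\mathrm{out}}(a,b,c,d)\,F_\tau(a,b,c,d)$. Here $F_\tau$ counts colorings of the part inside $B'$ with prescribed colours at NW, NE, SE, SW, and $F_{\mathrm{out}}$ is the same for $S$ and $S'$.

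For a state $\sigma$ of $T$, $F_\sigma$ vanishes unless the colours are constant along the two strands of $\sigma$. By symmetry under permuting colours, it then depends only on whether the two strand colours agree. Since the flip preserves the pairing type, $F_{\sigma^h}(a,b,c,d)=F_\sigma(d,c,b,a)=F_\sigma(a,b,c,d)$. A short case check over the choice at $x$ and the three pairings then gives $F_\tau=F_{\tau'}$.

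In the parallel case above you can even give a bijection. Exchange the two distinct colours at the NW and SW ends throughout $\sigma^h$, as in the proof of Theorem \ref{factor}. Colorability is then preserved automatically, since a state is colorable exactly when it has an $n$-coloring for some $n$.
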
 

\begin{proof} It suffices to consider the case in which $D'$ is obtained from $D$ by a single flype. 
For any state $S$ of $D$, let $S'$ be the state of $D'$ obtained by smoothing the same crossings (flipped in the tangle).  We see that  $S$ is colorable if and only if the corresponding state $S'$ of $D'$ is colorable. Moreover, the associated graphs $G_S, G_{S'}$ are isomorphic, and hence the chromatic polynomial contributions to the PK polynomial are the same. \end{proof}

By \cite{mt} and Proposition \ref{flype} we have: 

\begin{theorem} Assume that $L \subset \S^3$ is an alternating link. The PK polynomial of any reduced alternating diagram of $L$ is an invariant of the link.    \end{theorem}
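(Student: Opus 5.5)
The plan is to combine the Menasco--Thistlethwaite Flyping Theorem \cite{mt} with Proposition \ref{flype}, after first reducing to the prime case with Proposition \ref{Prime}. Let $D_1, D_2$ be two reduced alternating diagrams of the same alternating link $L\subset\S^3$; we must show $P_{D_1}(q)=P_{D_2}(q)$.

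First suppose $L$ is prime and non-split, so that $D_1,D_2$ are connected and prime; this uses Menasco's result that a reduced alternating diagram of a prime link is prime and that a connected alternating diagram represents a non-split link. By \cite{mt}, $D_2$ is obtained from $D_1$ by a finite sequence of flypes together with planar isotopies of $\S^2$. A planar isotopy does not change the combinatorial structure, so it leaves the colorable states and their component graphs unchanged. The PK polynomial is therefore preserved at each step, and Proposition \ref{flype} finishes this case.

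For general $L$, I would decompose it. A reduced alternating diagram of a split link is disconnected, with one alternating piece per split component. States of a disjoint union are pairs of states, colorable exactly when each piece is, and the component graph is then a disjoint union. Hence the PK polynomial is multiplicative under disjoint union. For a connected reduced alternating diagram of a composite link, Menasco's theorem says the diagram is visibly a connected sum $D'\sharp D''$ of reduced alternating diagrams of the factors. In Figure \ref{Reduced}, reducedness excludes forms (a) and (b), so the sum is of form (c), and Proposition \ref{Prime}(3) gives $P=q^{-1}P'P''$. Since prime decomposition of links is unique up to order, I would induct on the number of prime factors: the factors of $D_1$ and $D_2$ represent the same prime links, so their PK polynomials agree by the prime case, and therefore so do the products.

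The main obstacle is the reduction to the prime case. One issue is checking that the connected-sum decomposition of a reduced alternating diagram is really of type (c) with reduced alternating summands. The other is that one must import Menasco's results on primeness and splitting, which are not stated in the paper. If the intended statement tacitly concerns prime $L$, as Tait's conjecture was phrased for prime links, then only the prime case is needed and the argument is just \cite{mt} plus Proposition \ref{flype}.
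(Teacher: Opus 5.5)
Your prime case is exactly the paper's proof. The paper's whole argument is the line ``by \cite{mt} and Proposition \ref{flype}'', and it tacitly reads the statement for prime $L$: it states Tait's conjecture for prime links and immediately defines the coloring polynomial only for prime alternating links. It therefore never treats split or composite links. Your reduction of the general case goes beyond the paper, and it is sound. The steps check out as follows.

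\begin{itemize}
\item \emph{Split links.} Multiplicativity under disjoint union is immediate, since Tait $n$-colorings of a disjoint union are just pairs of colorings. This step is genuinely needed: two reduced alternating diagrams of a split link can put one piece in different regions of another, and no flype achieves that.
\item \emph{Form (c).} In the composite case, Menasco's decomposing circle meets $D$ in two points away from crossings. So form (c) comes directly, not from excluding (a) and (b).
\item \emph{Summands are reduced.} A nugatory crossing in a summand stays nugatory in $D$: shrink the other summand into a small box on one side of the nugatory circle.
\item \emph{Summands are alternating.} The arc of the $1$-$1$ tangle on the other side passes through an even number of crossings. It meets each of its own crossings twice, and each closed curve of the tangle an even number of times, because both its endpoints lie on the boundary circle. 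Closing the arc up therefore preserves the over/under pattern.
\item \emph{Uniqueness of factors.} Proposition \ref{Prime}(3) gives $P=q^{-1}P'P''$ no matter which components are joined. So uniqueness, up to isotopy, of the prime factors of a non-split link is all the induction needs. Mirror images must be kept distinct here, and they are.
\end{itemize}

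Your route proves the theorem as literally stated, for all alternating links. The price is importing results the paper never uses: Menasco's non-splitness and visible-primeness theorems, nontriviality of reduced alternating diagrams with crossings, and uniqueness of split and prime decompositions.
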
 

As a consequence of the theorem, the following is well defined. 

\begin{definition} The {\it coloring polynomial} of a prime alternating link $L \subset \S^3$ is the PK polynomial of any reduced alternating diagram of $L$. \end{definition} 

\begin{example} By Example \ref{trefoils}, the coloring polynomial of the right- and left-hand trefoil knots are $4q(q-1)$ and $q(q-1)(q-2)$, respectively. In particular, the two knots have different numbers of 3-colorings: 24 for the right-hand trefoil but only 6 for the left-hand. From this we see the well-known result that the trefoil knot is {\it chiral} (i.e., it is not isotopic to its mirror image). \end{example} 

The example above shows that chirality of prime alternating links $L$ can sometimes be proven using the coloring polynomial. If the coloring polynomial of $L$ is different from that of $\overline L$, then $L$ is chiral. 

Proposition \ref{leading} suggests a simple way to compare their degrees: Let $D$ be a reduced alternating diagram of $L$. The number of undotted regions  (the number of vertices of the dual Tait graph) is the degree of the coloring polynomial of $L$. The number of dotted regions (the number of vertices of the Tait graph) is the degree of the polynomial of $\overline L$. If the two numbers are different, then $L$ is chiral. This is a well-known result that follows from Theorem 3.1 of \cite{kauffman2}).

\begin{example} A projection of the link $L=8_1^3$  appears at the top of Figure \ref{3link}. There are 3 undotted regions and 7 dotted regions. Hence $L$ is chiral. 

\begin{figure}[H]
\begin{center}
\includegraphics[height=1.7 in]{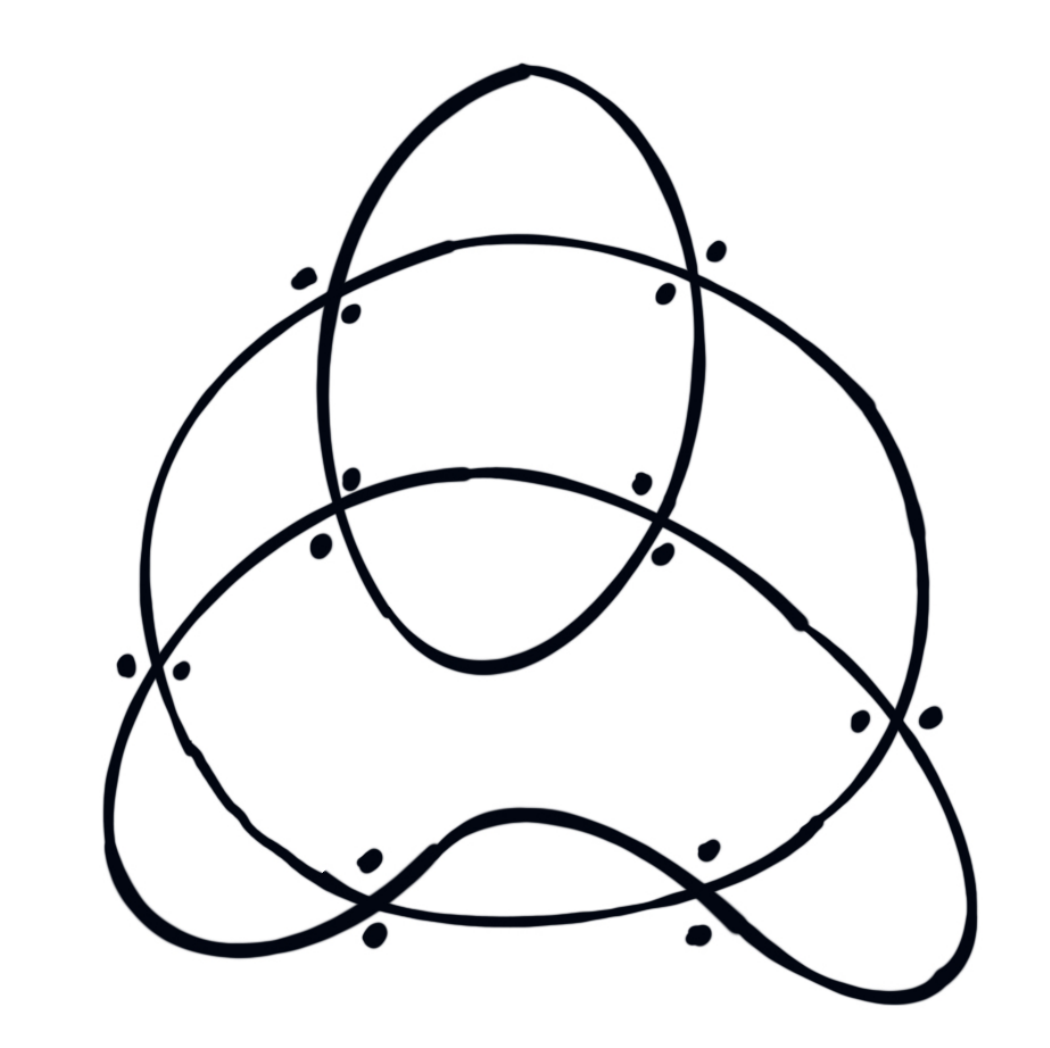}
\caption{Projection of $L=8_1^3$  }
\label{3link}
\end{center}
\end{figure}

\end{example}


\section{PK Polynomial and Falling Factorials} 
Assume that $(G,M)$ is a cubic graph with perfect matching $M$. 
For each $m \in \N$, let $c_m$ denote the number of Tait $m$-colorings of $(G,M)$ using all $m$ colors. 
(Note that $c_m =0$ for sufficiently large $m$.)  The number of Tait $n$-colorings of $(G,M)$ is: 
$$\sum_m {n\choose m} c_m.$$
Then the PK polynomial $P(q)$ is:
$$P(q) = \sum_m {q\choose m} c_m.$$

For any nonnegative integer $m$, the {\it $m$th falling factorial} $(q)_m$ is the polynomial $q(q-1)\cdots (q-m+1)$. Expressing ${q\choose m}$ as $(q)_m/m!,$ 
we have the following formulations of $P(q)$: 
\begin{equation} \label{first} P(q) = \sum_m {{c_m}\over m!} (q)_m \end{equation}  
and:
\begin{equation} P(q) = \sum_m  e_m (q)_m, \end{equation}  
where $e_m$ is the number of ways to color $(G,M)$ with exactly $m$ colors, up to renaming of colors.\bs

When $P(q)$ is written in terms of the falling factorials $(q)_m$ instead of the standard basis for polynomials, the degree and leading coefficient remain the same. The following is immediate.

\begin{prop} The  degree of $P(q)$ is the largest number $n$ such that $(G,M)$ can be  Tait $n$-colored using all $n$ colors. The leading coefficient of $P(q)$ is $e_n$. \end{prop}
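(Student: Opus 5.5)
The statement follows directly from the falling factorial expansion \eqref{first}, so the plan is to read off the top term of that expansion. First I record that the falling factorials $(q)_0, (q)_1, (q)_2, \ldots$ form a basis of the polynomial ring over $\mathbb{Q}$. Each $(q)_m$ is monic of degree exactly $m$, so the change-of-basis matrix to the monomials is unitriangular. Consequently, if
$$P(q)=\sum_m e_m (q)_m$$
with finitely many nonzero $e_m$, and $n$ is the largest index with $e_n\neq 0$, then $P(q)=e_n q^n + (\text{lower order terms})$. This holds because every other summand $e_m(q)_m$ with $m<n$ has degree less than $n$. Hence $\deg P = n$ and the leading coefficient is $e_n$.

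Next I justify the expansion itself and identify $n$ combinatorially. I count Tait $N$-colorings of $(G,M)$ for a positive integer $N$ according to the set of colors actually used. A Tait $N$-coloring using exactly $m$ colors is obtained in a unique way as follows:
\begin{itemize}
\item choose an $m$-element subset of $\{1,\ldots,N\}$;
\item choose a Tait coloring that uses all of those $m$ colors.
\end{itemize}
The Tait condition in Figure \ref{Matching} depends only on equalities and inequalities among colors, so it is invariant under renaming. Therefore the number of such colorings is ${N\choose m}c_m$. Summing over $m$ gives the count, and uniqueness of the interpolating polynomial gives $P(q)=\sum_m {q\choose m}c_m$.

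To connect this with $e_m$, note that the symmetric group on the $m$ used colors acts freely on the set of surjective colorings. Indeed, a nontrivial renaming moves some used color, and so changes the coloring on the edges carrying that color. Hence $c_m = m!\,e_m$, and in particular $e_m\neq 0$ exactly when $c_m\neq 0$. The largest index $n$ with $e_n\neq 0$ is then precisely the largest $n$ for which $(G,M)$ admits a Tait $n$-coloring using all $n$ colors. Such an index exists whenever $P\neq 0$, since $c_m=0$ once $m$ exceeds the number of edges not in $M$.

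I do not expect a real obstacle here. The only points needing care are the freeness of the renaming action, which makes $c_m/m!$ an integer equal to $e_m$, and the degenerate case $P=0$. When $P=0$ there are no Tait colorings at all, and the statement should be read with the convention that the degree is undefined. Everything else is the triangularity argument in the first paragraph.
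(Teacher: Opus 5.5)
Your proposal is correct and follows the paper's own reasoning: the paper also reads off the degree and leading coefficient from $P(q)=\sum_m e_m (q)_m$, using that each $(q)_m$ is monic of degree $m$. You additionally justify the expansion and the identity $c_m=m!\,e_m$ (via the free renaming action), which the paper states just before the proposition without spelling out.
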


 \begin{example}\label{adequacy} Recall that for semi-reduced alternating plane link diagrams  the degree of the PK polynomial is the number $||S_0||$ of components of the all-smoothed state. For general link diagrams the degree of $P(q)$ can be strictly less than $||S_0||$. Consider the link diagram in Figure \ref{cexample} below. One checks that $S_0$ has 3 components but no 3-component state of $D$ is colorable. The only colorable state is the link diagram itself. Hence $P(q) = q(q-1)$.

\begin{figure}[H]
\begin{center}
\includegraphics[height=1.5 in]{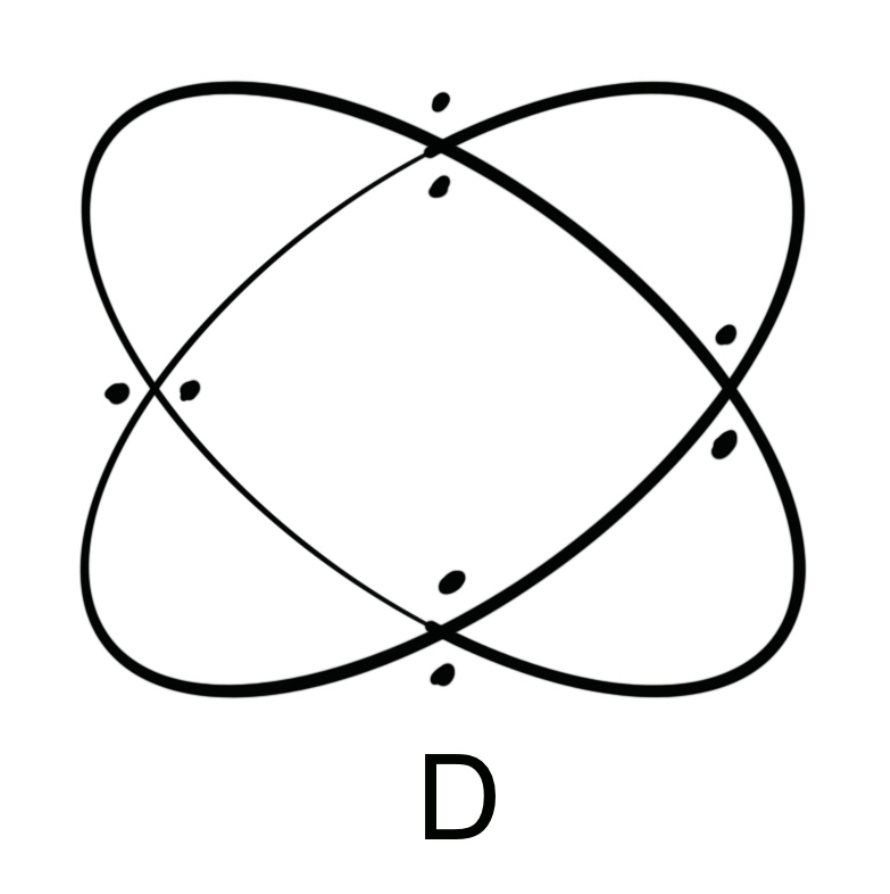}
\caption{Nonalternating link diagram with $S_0$ noncolorable} 
\label{cexample}
\end{center}
\end{figure}

\end{example}

Denote by $s(n,k)$ the Stirling symbol of the first kind. Its absolute value $(-1)^{n-k} s(n, k)$ is equal to the number of permutations of $n$ elements with exactly $k$ disjoint cycles. It is well known that 
$$(q)_n = \sum_{k=1}^{n} s(n,k) q^k.$$

\begin{theorem}\label{coefficients} The $k$th coefficient of $P(q)= \sum a_k q^k$  is 
$$a_k= \sum_m s(m, k)\frac{c_m}{m!}$$
\end{theorem}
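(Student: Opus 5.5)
The plan is to start from the falling-factorial expansion (\ref{first}), $P(q)=\sum_m \frac{c_m}{m!}(q)_m$, and expand each $(q)_m$ in the monomial basis using the Stirling numbers of the first kind. Then we collect the coefficients of $q^k$.

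First I will justify (\ref{first}) itself, since everything rests on it. The number of Tait $n$-colorings is $\sum_m \binom{n}{m}c_m$: a Tait $n$-coloring uses some set of exactly $m$ colors, there are $\binom{n}{m}$ such sets, and for each set the colorings using all of it are counted by $c_m$ after relabeling. The sum is finite because $c_m=0$ once $m$ exceeds the number of edges not in $M$. Hence $\sum_m \binom{q}{m}c_m$ is a polynomial with rational coefficients agreeing with $P$ at every positive integer $n$, and so it equals $P(q)$. Writing $\binom{q}{m}=(q)_m/m!$ gives (\ref{first}).

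Next I substitute the identity $(q)_m=\sum_{k} s(m,k)q^k$. The convention is $s(m,k)=0$ for $k>m$ or $k<0$. Also $s(m,0)=0$ for $m\ge1$ and $s(0,0)=1$, which covers the constant term: $c_0=0$ unless $(G,M)$ is empty. This yields
$$P(q)=\sum_m \frac{c_m}{m!}\sum_k s(m,k)q^k=\sum_k\Big(\sum_m s(m,k)\frac{c_m}{m!}\Big)q^k .$$
Both sums are finite, so interchanging them is harmless. Comparing coefficients of $q^k$ gives $a_k=\sum_m s(m,k)\,c_m/m!$.

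There is no real obstacle; the only points needing care are the finiteness of the sums and the index conventions for $s(m,k)$ (including $k=0$). I would remark that, since $c_m$ is divisible by $m!$ (each coloring with exactly $m$ colors yields $m!$ relabelings, all distinct), $c_m/m!=e_m$ is an integer. This confirms that the $a_k$ are integers, consistent with $P$ being an integral polynomial.
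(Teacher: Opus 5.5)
Your proposal is correct and follows the paper's proof: substitute the Stirling expansion $(q)_m=\sum_k s(m,k)q^k$ into the falling-factorial formula $P(q)=\sum_m \frac{c_m}{m!}(q)_m$, interchange the finite sums, and compare coefficients of $q^k$. Your extra care over the finiteness of the sums, the justification of the falling-factorial formula, and the index conventions (including $k=0$) fills in details that the paper leaves implicit.
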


\begin{proof} Beginning with equation (\ref {first}) we can write
$$P(q) = \sum_m (q)_m  {{c_m}\over m!}  = \sum_m (\sum_{k=1}^n s(m,k) q^k) \frac{c_m}{m!} = \sum_{k=1}^n (\sum_m s(m,k)\frac{c_m}{m!}) q^k$$
\end{proof}

Since $s(m,1) = (-1)^{m-1}(m-1)!$, the linear coefficient of $P(q)$ can be expressed as an exponential generating function of the $c_m$. 

\begin{cor} The linear term of $P(q) = \sum a_k q^k$ has coefficient $$a_1=\sum_m (-1)^{m-1} \frac{c_m}{m}$$ \end{cor}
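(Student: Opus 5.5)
This corollary is the case $k=1$ of Theorem \ref{coefficients}. I take the formula
$$a_k=\sum_m s(m,k)\frac{c_m}{m!}$$
and set $k=1$. This gives $a_1=\sum_m s(m,1)\, c_m/m!$, so the only remaining work is to evaluate the Stirling number $s(m,1)$ for every $m\ge 1$.

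The key step is the identity $s(m,1)=(-1)^{m-1}(m-1)!$. To justify it, I read off the coefficient of $q$ in the falling factorial $(q)_m=q(q-1)\cdots(q-m+1)$. Since $q$ divides $(q)_m$, that coefficient is the value at $q=0$ of
$$(q-1)(q-2)\cdots(q-m+1),$$
which equals $(-1)^{m-1}(m-1)!$. As a check, this agrees with the combinatorial description: the number of permutations of $m$ elements with a single cycle is $(m-1)!$. Substituting into the formula gives
$$a_1=\sum_m (-1)^{m-1}\frac{(m-1)!}{m!}\,c_m=\sum_m (-1)^{m-1}\frac{c_m}{m},$$
which is the claim.

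The only point needing care is the range of summation. The term $m=0$ has $s(0,1)=0$, since $(q)_0=1$ has no linear term, so it contributes nothing and the sum may be taken over $m\ge 1$. Also, $c_m=0$ for large $m$, so the sum is finite. There is no real obstacle; the computation is a direct substitution once Theorem \ref{coefficients} is in hand.
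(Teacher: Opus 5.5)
Your proposal is correct and follows the paper's own argument: set $k=1$ in Theorem \ref{coefficients} and substitute $s(m,1)=(-1)^{m-1}(m-1)!$. Your justification of that identity by evaluating $(q-1)\cdots(q-m+1)$ at $q=0$, and your remark that the $m=0$ term vanishes, fill in small details the paper leaves implicit.
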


We have already seen that the leading coefficient of $P(q)$ is $a_d=e_d$, the number of ways that we can color the associated link diagram $D$ with the maximum number $d$ of colors, up to renaming.  The second- and third-highest coefficients are given by the following.

\begin{cor} \label{penultimate} If $P(q)= \sum a_k q^k$ has degree $d$, then $$a_{d-1}=- \frac{d(d-1)}{2} e_d+e_{d-1}.$$ \end{cor}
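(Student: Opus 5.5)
The plan is to expand the falling-factorial form $P(q)=\sum_m e_m (q)_m$ in the standard basis and read off the coefficient of $q^{d-1}$. Since $P$ has degree $d$, the preceding proposition gives $e_m=0$ for $m>d$, so only the terms $m=d$ and $m=d-1$ can contribute to $q^{d-1}$.

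Equivalently, one can use Theorem \ref{coefficients} with $e_m=c_m/m!$. This gives
$$a_{d-1}=\sum_m s(m,d-1)\,e_m = s(d,d-1)\,e_d+s(d-1,d-1)\,e_{d-1},$$
because $s(m,k)=0$ whenever $m<k$.

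It remains to evaluate the two Stirling numbers:
\begin{itemize}
\item $s(d-1,d-1)=1$, since $(q)_{d-1}$ is monic.
\item $s(d,d-1)$ is the coefficient of $q^{d-1}$ in $q(q-1)\cdots(q-d+1)$. This equals $-(0+1+\cdots+(d-1))=-\tfrac{d(d-1)}{2}$, as the negative sum of the roots.
\end{itemize}
The same value follows from the cycle count: a permutation of $d$ elements with $d-1$ cycles is a single transposition, giving $\binom d2$ such permutations with sign $(-1)^{1}$.

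Substituting these values yields $a_{d-1}=-\frac{d(d-1)}{2}e_d+e_{d-1}$. There is no real obstacle here. The only point needing care is that the terms with $m>d$ vanish, which uses the degree characterization from the preceding proposition.
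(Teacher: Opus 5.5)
Your proposal is correct and follows the paper's approach: the paper gives no separate proof and reads the corollary off Theorem \ref{coefficients}, with $e_m=c_m/m!$, with $e_m=0$ for $m>d$, and with $s(d,d-1)=-\binom{d}{2}$ and $s(d-1,d-1)=1$. Your two checks of $s(d,d-1)$, by the sum of the roots and by counting transpositions, are both sound.
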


\begin{cor} If $P(q)= \sum a_k q^k$ has degree $d$, then 
$$a_{d-2}=\frac{d(d-1)(d-2)(3d-1)}{24} e_d- \frac{(d-1)(d-2)}{2} e_{d-1} + e_{d-2}.$$ \end{cor}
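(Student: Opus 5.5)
Since $P(q)=\sum_m e_m (q)_m$ and $e_m=0$ for $m>d$, the coefficient $a_{d-2}$ comes only from the three falling factorials $(q)_d$, $(q)_{d-1}$, $(q)_{d-2}$. Lower ones have degree less than $d-2$. So I would apply Theorem \ref{coefficients} in the form $a_{d-2}=\sum_m s(m,d-2)\,e_m$, which leaves
$$a_{d-2}= s(d,d-2)\,e_d + s(d-1,d-2)\,e_{d-1} + s(d-2,d-2)\,e_{d-2}.$$
The proof then reduces to evaluating these three Stirling numbers, in the same way that Corollary \ref{penultimate} used $s(d,d-1)$ and $s(d-1,d-1)$.

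The last two values are standard. We have $s(d-2,d-2)=1$. Also $s(d-1,d-2)=-\binom{d-1}{2}=-\frac{(d-1)(d-2)}{2}$, since it is the negative of the sum $1+2+\cdots+(d-2)$. These give the second and third terms of the claimed formula.

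The only real computation is $s(d,d-2)$. This is the coefficient of $q^{d-2}$ in $q(q-1)\cdots(q-d+1)$, namely the second elementary symmetric function $e_2(0,1,\ldots,d-1)$, which carries sign $+$. I would compute it as
$$\tfrac12\Bigl[\bigl(\textstyle\sum_{i<d} i\bigr)^2-\sum_{i<d} i^2\Bigr]=\tfrac12\Bigl[\tfrac{d^2(d-1)^2}{4}-\tfrac{(d-1)d(2d-1)}{6}\Bigr].$$
Factoring out $d(d-1)/24$ leaves $3d(d-1)-2(2d-1)=3d^2-7d+2=(3d-1)(d-2)$. This yields $\frac{d(d-1)(d-2)(3d-1)}{24}$.

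The only step that needs care is this algebraic simplification, together with the signs. As a check I would use the combinatorial count: $|s(d,d-2)|$ counts the permutations of $d$ elements with $d-2$ cycles, which are either a single 3-cycle or two transpositions. That gives $2\binom d3+3\binom d4$, which agrees with the expression above.
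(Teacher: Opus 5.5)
Your proof is correct and follows the paper's route: the corollary is read off from Theorem \ref{coefficients} (with $e_m=c_m/m!$) by inserting $s(d,d-2)=\frac{d(d-1)(d-2)(3d-1)}{24}$, $s(d-1,d-2)=-\binom{d-1}{2}$ and $s(d-2,d-2)=1$, just as Corollary \ref{penultimate} uses $s(d,d-1)$ and $s(d-1,d-1)$. Your evaluation of $s(d,d-2)$ as an elementary symmetric function of $0,1,\ldots,d-1$, and the permutation-count check, are both accurate; note only that writing this symmetric function as $e_2$ clashes with the paper's coloring counts $e_m$.
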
  

The number $e_m$  of Tait $m$-colorings of $(G,M)$ with exactly $m$ colors, up to renaming colors, can be computed from the values of the PK polynomial $P(q)$. 

\begin{prop} \begin{equation} \label{formula1} e_m= \frac{1}{m!} \sum_{j=0}^m (-1)^j {m \choose m-j} P(m-j).\end{equation}  \end{prop}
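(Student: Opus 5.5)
The plan is to combine the falling-factorial expansion of $P(q)$ with binomial (Möbius) inversion. From the discussion preceding equation (\ref{first}), evaluating at a nonnegative integer $n$ gives
$$P(n)=\sum_{k} {n\choose k} c_k ,$$
where $c_k$ is the number of Tait $k$-colorings of $(G,M)$ using exactly $k$ colors.

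To justify this identity, fix $n$. Every Tait $n$-coloring uses some subset $A\subseteq\{1,\dots,n\}$ of the colors, and it uses exactly the colors in $A$. For a fixed $A$ with $|A|=k$, such colorings are in bijection with Tait $k$-colorings using all $k$ colors: relabel $A$ order-preservingly onto $\{1,\dots,k\}$. The Tait condition is invariant under injective relabeling of colors, since it only asks that two colors be distinct and that two sets of colors agree. Summing over the ${n\choose k}$ subsets $A$ gives the identity. It also holds at $n=0$ and for small $n$, because both sides count the same finite sets and $P(n)$ is by definition the number of Tait $n$-colorings.

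Next apply binomial inversion. The sequences $f(n)=P(n)$ and $g(k)=c_k$ satisfy $f(m)=\sum_{k=0}^m {m\choose k} g(k)$, so
$$c_m=\sum_{k=0}^m (-1)^{m-k}{m\choose k}P(k).$$
If I want the inversion to be self-contained rather than cited, I substitute the first identity into the right-hand side and interchange sums. This gives $\sum_{i} c_i\sum_{k}(-1)^{m-k}{m\choose k}{k\choose i}$. Using ${m\choose k}{k\choose i}={m\choose i}{m-i\choose k-i}$, the inner sum is ${m\choose i}(1-1)^{m-i}=\delta_{im}$.

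Finally, reindex with $j=m-k$, so that ${m\choose k}={m\choose m-j}$ and $P(k)=P(m-j)$. Dividing by $m!$, and using $e_m=c_m/m!$ (each coloring with exactly $m$ colors has exactly $m!$ renamings, all distinct, since every color is used), gives formula (\ref{formula1}).

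The only point needing care is the relation $e_m=c_m/m!$ together with the validity of the counting identity at small arguments, in particular $P(0)=0$ when $M$ is nonempty. Both follow directly from the definitions, so I expect no real obstacle beyond checking the inversion identity.
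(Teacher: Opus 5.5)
Your proof is correct and is essentially the paper's argument: the paper applies inclusion--exclusion over the $m$ colors directly, and your binomial inversion of $P(n)=\sum_k\binom{n}{k}c_k$ is that same inclusion--exclusion written algebraically, with $e_m=c_m/m!$ justified (as you do) by the fact that the $m!$ renamings of a coloring using all $m$ colors are distinct. One small precision: the definition of $P$ only fixes its values at positive integers, so $P(0)=c_0$ should be read off from the polynomial identity $P(q)=\sum_m\binom{q}{m}c_m$ rather than ``by definition.''
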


\begin{proof} Equation \ref{formula1} follows from an inclusion/exclusion argument. Consider $m$ tokens labeled $1, \ldots, n$. Let  $P(j)$ be the ``cost" of selecting any collection of $j$ tokens, where $j \le m$. Then the cost of selecting exactly $m$ tokens is given by
$$P(m) - {m \choose m-1} P(m-1) +{m\choose m-2}P(m-2)- \cdots \pm {m\choose m}P(0).$$
Interpret the cost of any collection of $m$ tokens as the number of ways to $m$-color $(G,M)$.  Then $e_m$ is obtained by dividing by $m!$. \end{proof}

\begin{prop} \label{linearcoef} Assume that $D$ is a semi-reduced alternating link diagram in the plane. If $P(q) = \sum a_k q^k$ is its PK polynomial, then $a_1$ is even. \end{prop}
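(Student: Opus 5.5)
The plan is to reduce the parity of $a_1$ to a statement about Tait $2$-colorings, using the falling-factorial expansion of this section. By the Corollary giving the linear coefficient,
$$a_1=\sum_m (-1)^{m-1}\frac{c_m}{m}.$$
Since $c_m$ counts colorings using all $m$ colors, the symmetric group $S_m$ acts freely on them, so $c_m=m!\,e_m$ and $c_m/m=(m-1)!\,e_m$. For $m\ge 3$ the factor $(m-1)!$ is even, so these terms vanish mod $2$. Moreover $e_1=0$ as soon as $M\neq\emptyset$, because the Tait condition demands $i\neq j$. Hence
$$a_1\equiv e_2 = c_2/2 \pmod 2,$$
and it suffices to show that $c_2\equiv 0 \pmod 4$.

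Next I will compute $c_2$ directly from $(G,M)$. The subgraph $G\setminus M$ is a $2$-factor, that is, a disjoint union of cycles. In a Tait $2$-coloring, the two non-$M$ edges at each vertex must receive distinct colors, so the colors alternate around each cycle. The condition at the far end of each $M$-edge is then automatic, since both ends see the set $\{1,2\}$. Thus $c_2=2^{k}$ if every cycle of $G\setminus M$ has even length and $c_2=0$ otherwise, where $k$ is the number of cycles. Note that $P(1)=0$, so this is also $P(2)$. Therefore $a_1$ is even unless all cycles are even and $k=1$.

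It remains to exclude the case of a single cycle. For an alternating plane diagram I will use the description of Section~\ref{aigner's}: $D$ comes from $(\hat G,M)$, where $G$ is a plane graph (a Tait graph of $D$) and the cycles of $\hat G\setminus M$ are exactly the blown-up vertex polygons. So $k=|V(G)|$, and $k=1$ means that $G$ is a one-vertex plane graph all of whose edges are loops. Each such loop separates the plane, so the corresponding crossing of $D$ is nugatory.

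The main obstacle is to check, using the dot/checkerboard convention for alternating diagrams, that a nugatory crossing arising from a loop of this Tait graph has the shape of Figure~\ref{Reduced}(a) rather than (b). This would contradict semi-reducedness. My first attempt is to take an innermost loop, follow the dots at its crossing, and compare the result with the configuration used in the argument of Figure~\ref{F2}. I expect this to be where care is needed: if the orientation came out the other way, the one-crossing kink would give $P(q)=q(q-1)$ with odd $a_1$. So this final step is where the alternating and semi-reduced hypotheses must genuinely be used.
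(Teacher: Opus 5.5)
\textbf{There is a gap at the final step, and it cannot be closed: the statement is false as written.} Everything up to that step is correct, and it is the paper's own argument in substance. You show $a_1\equiv e_2 \pmod 2$, and $c_2=2^k$ with $k$ the number of cycles of $\hat G\setminus M$. For an alternating plane diagram, $k$ is the number $V$ of dotted regions. The paper obtains the same count $2^V$ by alternately coloring the arcs around each dotted region.

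The orientation you hoped for comes out backwards. A loop of the Tait graph at a crossing means the two \emph{dotted} corners of that crossing lie in the same region. So the separating circle runs through the crossing and those two dotted corners. The permitted smoothing merges the dotted corners. Its two arcs therefore cut the diagram into its two sides, and the resulting components kiss. That is form (b), which contributes the factor $(1-q^{-1})$ in Proposition~\ref{Prime}, not form (a).

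Form (a) corresponds instead to a loop of the \emph{dual} Tait graph, i.e. a bridge of $\hat G$. A bouquet of loops has no bridges, so by Proposition~\ref{bridgeless} every alternating plane diagram with $V=1$ is semi-reduced.

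The kink you flagged is a genuine counterexample. Take $\hat G$ to be the theta graph with one edge in $M$; this is the blow-up of a one-vertex plane graph with a single loop. A Tait $n$-coloring is then just a choice of two distinct colors on the two non-matching edges. So $P(q)=q(q-1)$ and $a_1=-1$. More generally, $k$ kinks in a row give $P(q)=q(q-1)^k$, whose linear coefficient is $\pm 1$.

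The proposition therefore needs an extra hypothesis, such as that $D$ has at least two dotted regions. This holds, for instance, whenever $D$ is reduced and has a crossing, since a reduced alternating diagram has a loopless Tait graph. The paper's proof has the same hidden hole: it concludes that $e_2=2^{V-1}$ is even, which silently requires $V\ge 2$. With that hypothesis added, your argument up to the case $k=1$ is already a complete proof.
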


\begin{proof}  First observe that the linear coefficient of $(q)_2$ is $-1$, while for $m>2$ the linear coefficient of $(q)_m$ is even. Consequently, the contribution to $a_1$ from $m$-colorings of $D$ with $m>2$ is even. It suffices to prove that this is also the case for contributions from $2$-colorings.

We follow the idea of the proof of Corollary 7 in \cite{aigner}.  If the dotted regions of $D$ each have an even number of arcs (equivalently, the Tait graph is Eulerian), then color the arcs of each dotted region alternately with two colors. We can smooth a subset of the crossings in such a way that that the assignment is a 2-coloring of a state of $D$. All 2-colorings of states arise this way.  Hence the number of 2-colorings of states of $D$ is $2^V$, where $V$ is the number of dotted regions (equivalently, the number of vertices of the Tait graph). In this case, the coefficient $e_2$ of the falling factorial expression of $P(q)$ is even, and the contribution to $a_1$ from 2-colorings is even. 

If some dotted region of $D$ has an odd number of arcs,  then no state of $D$ has a 2-coloring, and the contribution to $a_1$ from $2$-colorings is zero. 
\end{proof} 

\begin{remark} Figure \ref{nonaltex}  shows that the requirement of Proposition \ref{linearcoef} that the link diagram be alternating is needed. \end{remark} 

\begin{theorem} \cite{aigner} Let $D$ be a semi-reduced alternating link diagram in the plane with PK polynomial $P(q)=\Sigma_{k=1}^n a_kq^k$.  Assume that the Tait graph of $D$ is Eulerian. Then the coefficients of $P(q)$ other than the constant term are all nonzero and alternate in sign.  \end{theorem}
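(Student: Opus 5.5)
The plan is to work directly with the decomposition of Theorem \ref{PK}, $P(q)=\sum_S \chi(G_S,q)$ over colorable states $S$, and to show that every summand has the same sign pattern. Then no cancellation can occur, and nonvanishing of each coefficient follows from a single summand. I will use two classical facts about chromatic polynomials. First, if $H$ is a graph with $h$ vertices, then $\chi(H,q)=\sum_k b_k q^k$ with $(-1)^{h-k}b_k\ge 0$. Second, if $H$ is connected, then $b_k\neq 0$ for every $1\le k\le h$, and $b_0=0$. It follows that $\chi(G_S,q)$ has coefficient of $q^k$ with sign $(-1)^{||S||-k}$, or zero.

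The first key step is a parity statement: all colorable states of $D$ have $||S||\equiv ||S_0|| \pmod 2$. To prove it, I will use that $D$ is a plane diagram. Changing a single site from smoothed to crossed changes the number of components by exactly $\pm 1$. If the two arcs at the site lie on different components, these merge. If they lie on the same component, the curve is split, since a self-crossing curve in $\S^2$ cannot be resolved into a single curve; this is the standard checkerboard/Jordan curve argument. Hence $||S||\equiv ||S_0||+(\text{number of crossings of }S)\pmod 2$. In the proof of the $2^{n-1}$ bound it was already observed that every colorable plane state has an even number of crossings, because distinct circles in the plane meet in an even number of points. This gives the parity claim. Consequently, with $d=||S_0||$, which equals $\deg P$ by Proposition \ref{leading}, every summand $\chi(G_S,q)$ has coefficient of $q^k$ of sign $(-1)^{d-k}$ or zero. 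So $(-1)^{d-k}a_k\ge 0$ for all $k$, which is the alternation.

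The second step is nonvanishing. The all-smoothed state $S_0$ is colorable, and its component graph is the dual Tait graph $T$ with multiple edges identified. Since $D$ is connected, $T$ is connected, so $\chi(G_{S_0},q)$ has nonzero coefficients in every degree $1,\dots,d$. Adding the remaining summands, which have the same signs, cannot cancel these, so $a_k\neq 0$ for $1\le k\le d$. The constant term is $0$.

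The step I expect to be the main obstacle is justifying where the Eulerian hypothesis on the Tait graph is needed. The argument above appears to use only planarity and semi-reducedness. I would first check the $\pm 1$ claim carefully, including the dot and orientation conventions under which states remain unions of plane circles. I would then compare the argument with Aigner's formulation, since for the Penrose polynomial the Eulerian hypothesis may be needed to translate between ${\cal P}$ and $P$, or to control parity via the $2^V$ count in Proposition \ref{linearcoef}. If the parity argument turned out to have a gap, the fallback would be to use the Eulerian structure: alternately 2-color the arcs of each dotted region, as in the proof of Proposition \ref{linearcoef}, to attach a consistent sign to each state.
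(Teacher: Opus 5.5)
\textbf{Gap: the $\pm 1$ claim in your parity step is false for plane diagrams in general.} Your overall plan is the paper's: every summand $\chi(G_S,q)$ has the sign pattern $(-1)^{d-k}$, and a connected summand forces nonvanishing. The failure is the claim that toggling a site always changes $||\cdot||$ by exactly $\pm 1$.

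The standard fact is weaker than what you use. Of the two resolutions of a self-crossing of a single closed curve, only the orientation-respecting one splits it. The other resolution leaves a single curve; for example, a figure-eight resolved that way is a circle. In the PK setting the smoothing at each site is dictated by the dots, not by any orientation. So if the two arcs at a smoothed site lie on one component but traverse the site in opposite directions, restoring the crossing leaves that component connected.

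This already happens for the left-hand trefoil of Example~\ref{trefoils}:
\begin{itemize}
\item $S_0$ has three components;
\item restoring one crossing gives two components;
\item restoring a second crossing gives one component;
\item restoring the third gives the trefoil knot, still one component.
\end{itemize}
The paper's remark on the Borromean rings also has colorable states with $4$ and with $3$ components. So the parity conclusion itself fails without the Eulerian hypothesis, and no argument using only planarity and semi-reducedness can work. Your suspicion in the last paragraph is correct, and the fallback about $2$-coloring arcs does not supply the missing idea.

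\textbf{The missing idea is orientation.} Since the Tait graph is Eulerian, its planar dual, the dual Tait graph $T$, is bipartite. Orient the circles of $S_0$ (the boundaries of the undotted regions) counterclockwise for one class of the bipartition and clockwise for the other.

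At each site, the two smoothed arcs lie on the boundaries of the two undotted regions meeting at that crossing, and these are joined by an edge of $T$. Hence the two arcs are co-oriented; with the boundary orientation on both, they would be anti-parallel. Consequences:
\begin{itemize}
\item the crossing at every site is compatible with this orientation;
\item every state is a coherently oriented system of curves;
\item the dotted smoothing at each site is the oriented smoothing.
\end{itemize}

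Now if the two arcs at a smoothed site lie on one component, they are traversed in the same direction, so restoring the crossing splits that component. If they lie on different components, restoring the crossing merges them. This gives the $\pm 1$ property. With it, your parity argument, the sign argument, and the nonvanishing argument via $\chi(G_{S_0},q)$ go through as in the paper.

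\textbf{Connectivity.} You also tacitly assume $D$ is connected, and the nonvanishing claim genuinely needs this. For a split diagram, $P(q)$ is a product of polynomials each divisible by $q$, so its linear coefficient vanishes.
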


\begin{proof} We will prove the result for connected diagrams $D$.  This will suffice since the PK polynomial of a disjoint union of $r$ link diagrams is the product of the associated Penrose polynomials divided by $q^r$.

The hypothesis assures that we can orient boundaries of the all-smoothed state so that any pair of arcs that run parallel to each other, as in Figure \ref{eulerian}, have the same directions.  One checks easily that each time we add back a crossing to a state, we increase or decrease its number of components by one.

\begin{figure}[H]
\begin{center}
\includegraphics[height=1.5 in]{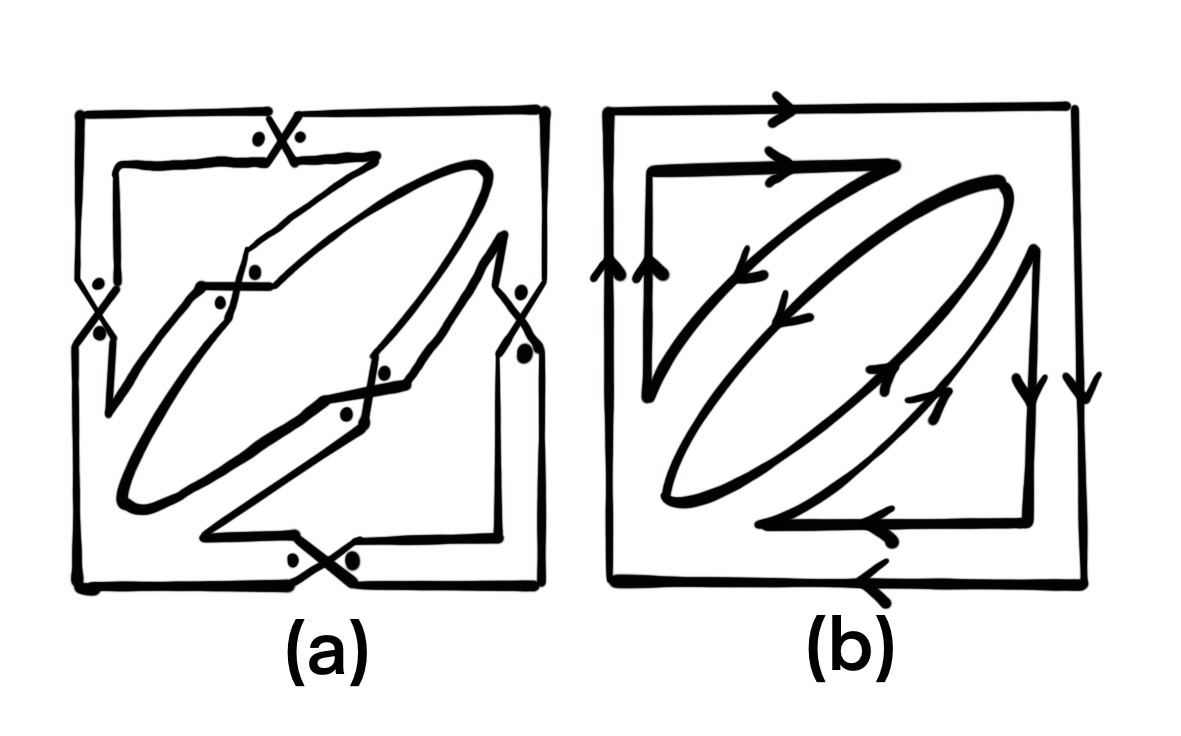}
\caption{(a) link diagram $D$; (b) orientation of all-smoothed state $S_0$ }
\label{eulerian}
\end{center}
\end{figure}

The colorable states must each have an even number of crossings since two components intersect in an even number of points and no component of a colorable state crosses itself. Thus the number of components of a colorable state must have the same parity as $||S_0||$.  Since $D$ is connected, so is each component graph $G_S$. By well-known properties of the chromatic polynomial, the degree of $\chi(G_S)$ is the number of components of $S$ and its leading coefficient is positive; the coefficients of $\chi(G_S)$ other than the constant coefficient are nonzero and alternate in sign (see \cite{biggs}). Since the PK polynomial of $D$ is the sum of such polynomials, its coefficients satisfy the same condition. 

\end{proof} 

\begin{remark}  If the Tait graph of $D$ is not Eulerian, then the PK polynomial of $D$ need not be a sum of chromatic polynomials with degrees of the same parity. For example,  the Tait graph of the usual diagram of the Borromean rings is the complete graph $K_4$ on four vertices. Its PK polynomial is the sum of the chromatic polynomials of $K_3$ (odd degree) and $K_4$ (even degree). However, the PK polynomial has coefficients that are sign-alternating. 
\end{remark}

\ni {\bf Conjectures} (see \cite{aigner}):  Let $(G,M)$ be a connected cubic plane graph with perfect matching. 

(1) The coefficient of the linear term of $P(q)$ is non-zero.   

(2) The coefficients of $P(q)$ are ``weakly sign-alternating," that is, alternately non-negative and non-positive.  \bs

Conjecture (1) is false without the hypothesis that $G$ (or the associated link diagram) is planar. A counterexample appears in Figure \ref{vbrings} below. The link diagram has exactly three colorable states. One of them is the diagram itself. The others each have two intersecting components. The polynomial $P(q)$ 
is $q(q-1)(q-2) + 2 q(q-1) = q^3-q^2.$

\begin{figure}[H]
\begin{center}
\includegraphics[height=2.5 in]{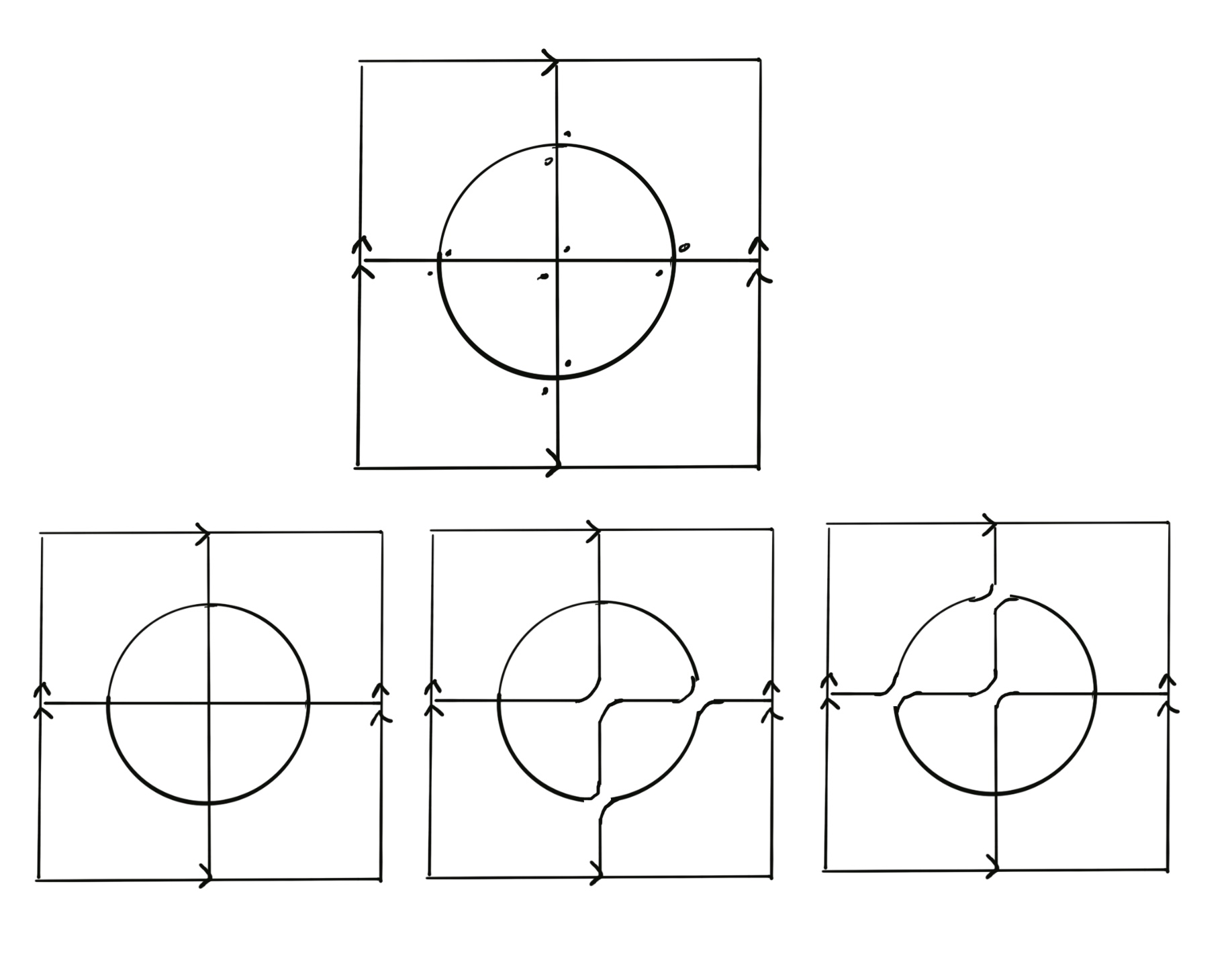}
\caption{Link diagram in torus and colorable states} 
\label{vbrings}
\end{center}
\end{figure}

If Conjecture (1) is false, then Proposition \ref{Prime} implies that for arbitrarily large $s$ that there exist $(G,M)$ such that the coefficients $a_k$ of $P(q)$ satisfy $a_k =0$ for all $k \le s$.

The value of $P(q)$ at negative integers can also be computed using the description of $P(q)$ in terms of falling factorials. 

\begin{prop} $$P(-n) = \sum_m(-1)^m {n+m-1 \choose m} c_m.$$ \end{prop}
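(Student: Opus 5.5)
We start from the binomial expansion $P(q) = \sum_m {q\choose m} c_m$ established at the beginning of this section. Since ${q\choose m} = (q)_m/m!$ is a polynomial identity in $q$, we may substitute $q=-n$ for a positive integer $n$. The only work is to evaluate ${-n \choose m}$ explicitly.

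The key step is the computation
$$(-n)_m = (-n)(-n-1)\cdots(-n-m+1) = (-1)^m\, n(n+1)\cdots(n+m-1).$$
Dividing by $m!$ gives
$${-n\choose m} = (-1)^m \frac{(n+m-1)!}{(n-1)!\,m!} = (-1)^m {n+m-1\choose m}.$$
This is the standard negation rule for binomial coefficients. We apply it term by term to the sum. The sum is finite because $c_m=0$ for large $m$, so the exchange of evaluation and summation is trivially justified.

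Combining the two displays gives
$$P(-n)=\sum_m (-1)^m{n+m-1\choose m}c_m,$$
which is the statement. One can also note that ${n+m-1\choose m}$ counts multisets of size $m$ from $n$ colors. This gives a combinatorial reading, but it is not needed for the proof.

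There is no real obstacle. The only points needing care are the sign bookkeeping in the falling factorial at a negative argument, and the remark that the evaluation is legitimate because $P$ is defined as a polynomial, not merely through its values at positive integers. The $m=0$ term needs a convention check: ${n-1\choose 0}=1$ with $c_0$, and $c_0=0$ whenever $M$ is nonempty.
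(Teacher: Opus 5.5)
Your proposal is correct and matches the paper's proof: both substitute $q=-n$ into $P(q)=\sum_m (q)_m\, c_m/m!$, rewrite $(-n)_m=(-1)^m (n+m-1)!/(n-1)!$, and divide by $m!$ to obtain $(-1)^m{n+m-1\choose m}$. Your added remarks are sound, namely that the substitution is legitimate because $P$ is a polynomial and that the $m=0$ term is harmless since $c_0=0$ for nonempty $M$.
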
\

\begin{proof} Begin with: 
$$(-n)_m = (-n)(-n-1) \cdots (-n-m+1)$$
$$=(-1)^m n(n+1) \cdots (n+m-1)$$
$$=(-1)^m \frac{(n+m-1)!}{(n-1)!}.$$
From equation (\ref{first}) with $q=-n$: 
$$P(-n) = \sum_m (-n)_m \frac{c_m}{m!}.$$
Substituting the expression for $(-n)_m$, we have: 
$$P(-n) = \sum_m (-1)^m \frac{(n+m-1)!}{(n-1)!} \frac{c_m}{m!}$$
$$=\sum_m(-1)^m {n+m-1 \choose m} c_m.$$

\end{proof}

\begin{cor} $P(-1) = \sum_m (-1)^m c_m$ and $P(-2) = \sum_m (-1)^m (m+1) c_m$ \end{cor}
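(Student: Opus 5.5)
The corollary is obtained by substituting $n=1$ and $n=2$ into the preceding proposition,
$$P(-n)=\sum_m (-1)^m {n+m-1 \choose m} c_m .$$
The only work is to evaluate the binomial coefficient in each case.

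For $n=1$ the coefficient is ${m \choose m}=1$ for every $m\ge 0$. This gives $P(-1)=\sum_m (-1)^m c_m$.

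For $n=2$ the coefficient is ${m+1 \choose m}=m+1$. This gives $P(-2)=\sum_m (-1)^m (m+1)c_m$.

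The sums are finite because $c_m=0$ for sufficiently large $m$, as noted at the start of this section. No convergence issue arises.

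The one point to check is that the proposition applies at these small values of $n$. Its derivation used only the identity $(-n)_m=(-1)^m (n+m-1)!/(n-1)!$. This identity is valid for every positive integer $n$, including $n=1$ where $(n-1)!=0!=1$, and for $m=0$, where both sides equal $1$ and ${n-1 \choose 0}=1$.

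A direct cross-check is also available. Since $(-1)_m=(-1)^m m!$, equation (\ref{first}) gives $P(-1)=\sum_m (-1)^m c_m$ immediately. Likewise $(-2)_m=(-1)^m (m+1)!$ gives the second formula.
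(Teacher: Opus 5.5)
Your proposal is correct and follows the paper's route: the corollary is the specialization $n=1$ and $n=2$ of the preceding formula for $P(-n)$, using $\binom{m}{m}=1$ and $\binom{m+1}{m}=m+1$. Your direct cross-check via $(-1)_m=(-1)^m m!$ and $(-2)_m=(-1)^m(m+1)!$ in the falling-factorial expansion of $P(q)$ is also valid, though redundant.
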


\begin{remark}  If Conjecture (2) is true, then $|P(-1)|$ is the sum $\sum_m |a_j|$, the 1-norm of $P(q) =  \sum a_j q^j$. \\
\end{remark}

\begin{remark} In \cite{jaeger} Jaeger proved that $P(-2)$ is equal to a nonzero multiple of the number of proper edge 3-colorings of $G$. For reduced alternating link diagrams $D$ the PK polynomial $P(q)$ would be nonzero and sign-alternating, and that would imply that $P(-2)$ is nonzero.  Using Theorem \ref{equivalent}, a proof of Conjecture 2 would yield a proof of the Four Color Theorem.   \end{remark}


\section*{Appendix: Kauffman's Formulation }\label{appendix}

Roger Penrose \cite{Penrose} gave a graphical recursion formula for calculating the number of Tait 3-colorings of a planar cubic graph G (see Section \ref{TaitColorings}).
 The formula is summarized as shown in Figure~\ref{Figure1}. In this figure it is understood that any loop has value 3, even if the loop is self-intersecting.
 The theory of the Penrose formula is based on tensor networks and can be seen in \cite{Penrose, KP}. In fact, the basic structure behind the Penrose formula depends on a simple combinatorial tautology shown in 
 in  Figure~\ref{Figure2}.  Here $\{G\}$ denotes the set of colorings of $G$ and the wiggly line denotes the assertion that the arcs connected by the wiggly line are colored differently.
 Thus the possibilities at an edge are that two colors distinct from the edge color are either parallel or permuted as is shown in  Figure~\ref{Figure2}.  Thus the equation shown in  Figure~\ref{Figure2} is a logical tautology.\\
 
 \begin{figure}
     \begin{center}
     \begin{tabular}{c}
     \includegraphics[width=8cm]{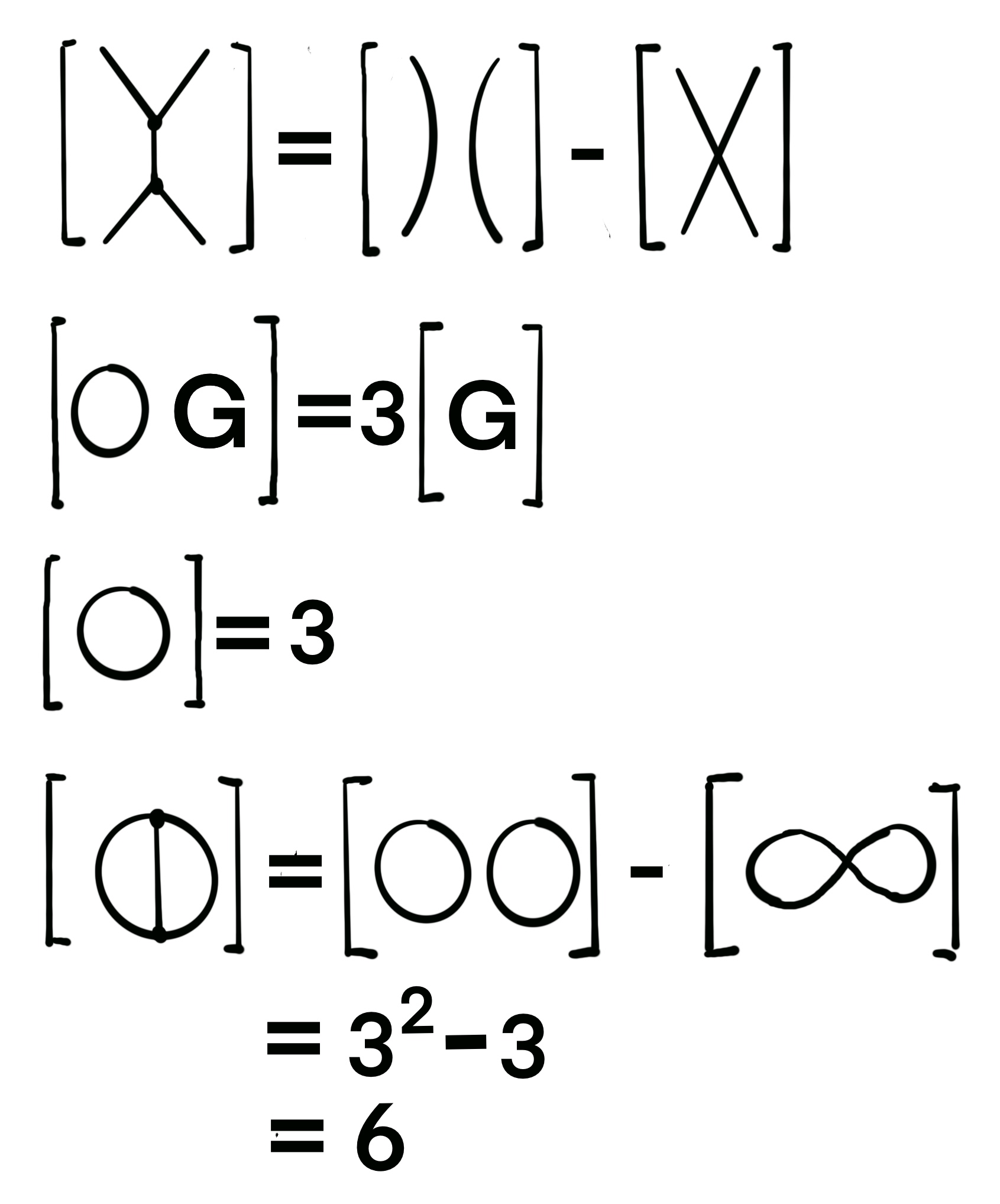}
     \end{tabular}
     \caption{Penrose Formula}
     \label{Figure1}
\end{center}
\end{figure}

\begin{figure}
     \begin{center}
     \begin{tabular}{c}
     \includegraphics[width=10cm]{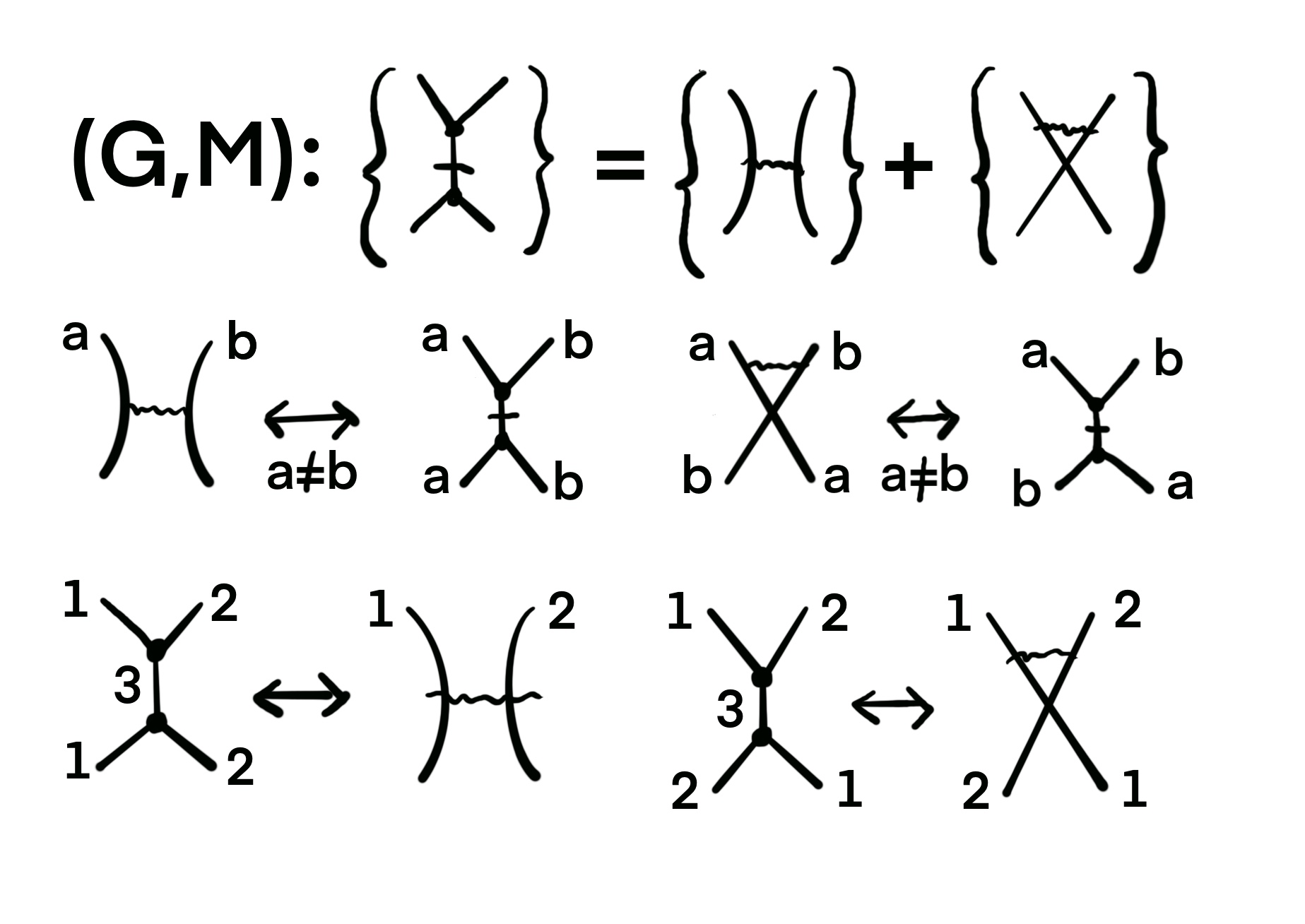}
     \end{tabular}
     \caption{Tautological Formula}
     \label{Figure2}
\end{center}
\end{figure}

 The tautology indicates how to generalize this notion of coloring a graph with 3 colors to colorings with $n$ colors for a graph $G$ with a given perfect matching $M.$ Here $M$ denotes a set of disjoint ``matching edges" 
 whose endpoints comprise the entire set of endpoints of the graph $G.$ Given such a pair $(G,M)$ one can try to color the edges of $G \setminus M$ with $n$ colors so that at a matching edge there are exactly two colors, distinct at each node incident to the matching edge. This means that at one node there are two colors and at the other node, there are the same two colors, parallel or permuted. See  Figure~\ref{Figure2} again. In that figure the mark on an edge means that it is 
 a matching edge. Note that in this way of coloring the matching edges do not receive a color. In the case of $n=3$ the matching edge does receive a color, the remaining third color not used. \\

 As the reader can see, the color expansion $\{G,M\}$ can be written as sum over all the ways of replacing each matching edge with either parallel or crossed arcs. See the example in  Figure~\ref{Figure3}.
 Each loop configuration, in this sum for the general color expansion, presents its own coloring problem, and can be reformulated as a standard graph coloring problem. To accomplish the reformulation, 
 replace each loop in the sum by a node and replace each wiggly line by a graphical edge between such nodes. Call this new graph $\G[S]$ where $S$ denotes the given loop configuration. (Configurations $S$ such that $G[S]$ has no vertices with self-loops correspond to the colorable states defined above.)
 See  Figure~\ref{Figure3} for an example. To count the number of colorings of a given loop configuration $S$ we can compute the chromatic polynomial $\chi(\G[S],n)$ of the graph associated with $S.$
 Then $$P(G,M)(n) = \sum_{S} \chi(\G[S],n)$$  counts the colorings of $(G,M)$ when $S$ runs over the loop configurations.
 
 \begin{figure}
     \begin{center}
     \begin{tabular}{c}
     \includegraphics[width=8cm]{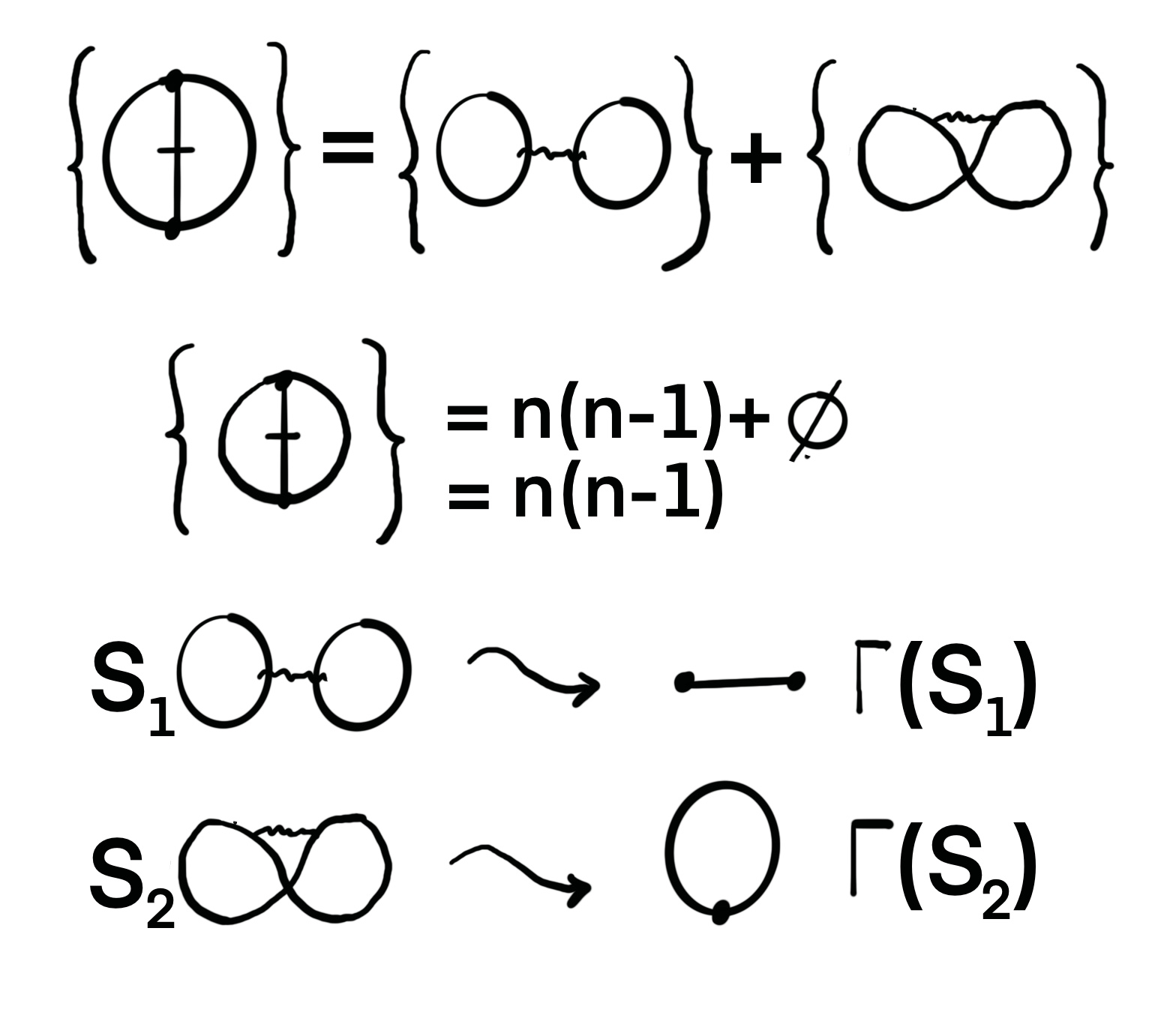}
     \end{tabular}
     \caption{Graphs for States}
     \label{Figure3}
\end{center}
\end{figure}

 We can formulate this use of the generalized tautological expansion entirely in terms of loop states by noting the equations in  Figure~\ref{Figure4} where the nodal crossed arcs mean that there is only one color assigned to the arcs incident to the node. The diagrammatic equations of  Figure~\ref{Figure4} correspond to the logical identity {\it Different = Anything - Same}. Then we have an expansion formula for counting Tait $n$-colorings of perfect matchings
in terms of the given diagrams related to the graph.  This formula can be taken in the spirit of the original Penrose formula and it applies to any cubic graph, not just to graphs that are embedded in the plane.\\
 
 \begin{figure}
     \begin{center}
     \begin{tabular}{c}
     \includegraphics[width=8cm]{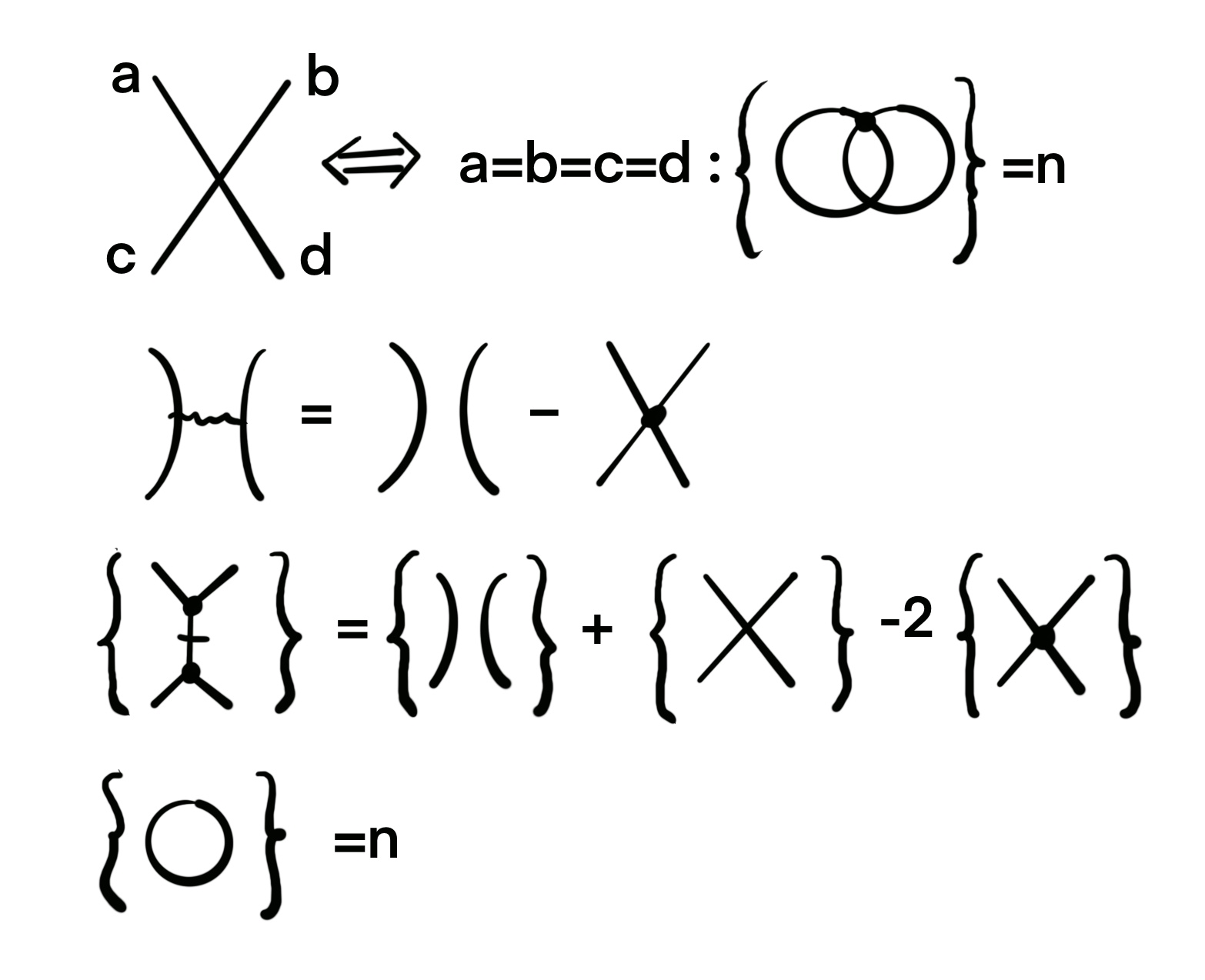}
     \end{tabular}
     \caption{Generalized Penrose Expansion}
     \label{Figure4}
\end{center}
\end{figure}

 By replacing the matching edge by a knot-theoretic crossing as in  Figure~\ref{Figure5}, we can formulate all these considerations in knot and link diagrams. In this figure we use a virtual crossing designation on the crossed arcs.
 One can start with virtual diagrams when working with non-planar graphs. The virtual crossings are then crossings resulting from taking representative immersions of the graphs in the plane.\\
 
 \begin{figure}
     \begin{center}
     \begin{tabular}{c}
     \includegraphics[width=6cm]{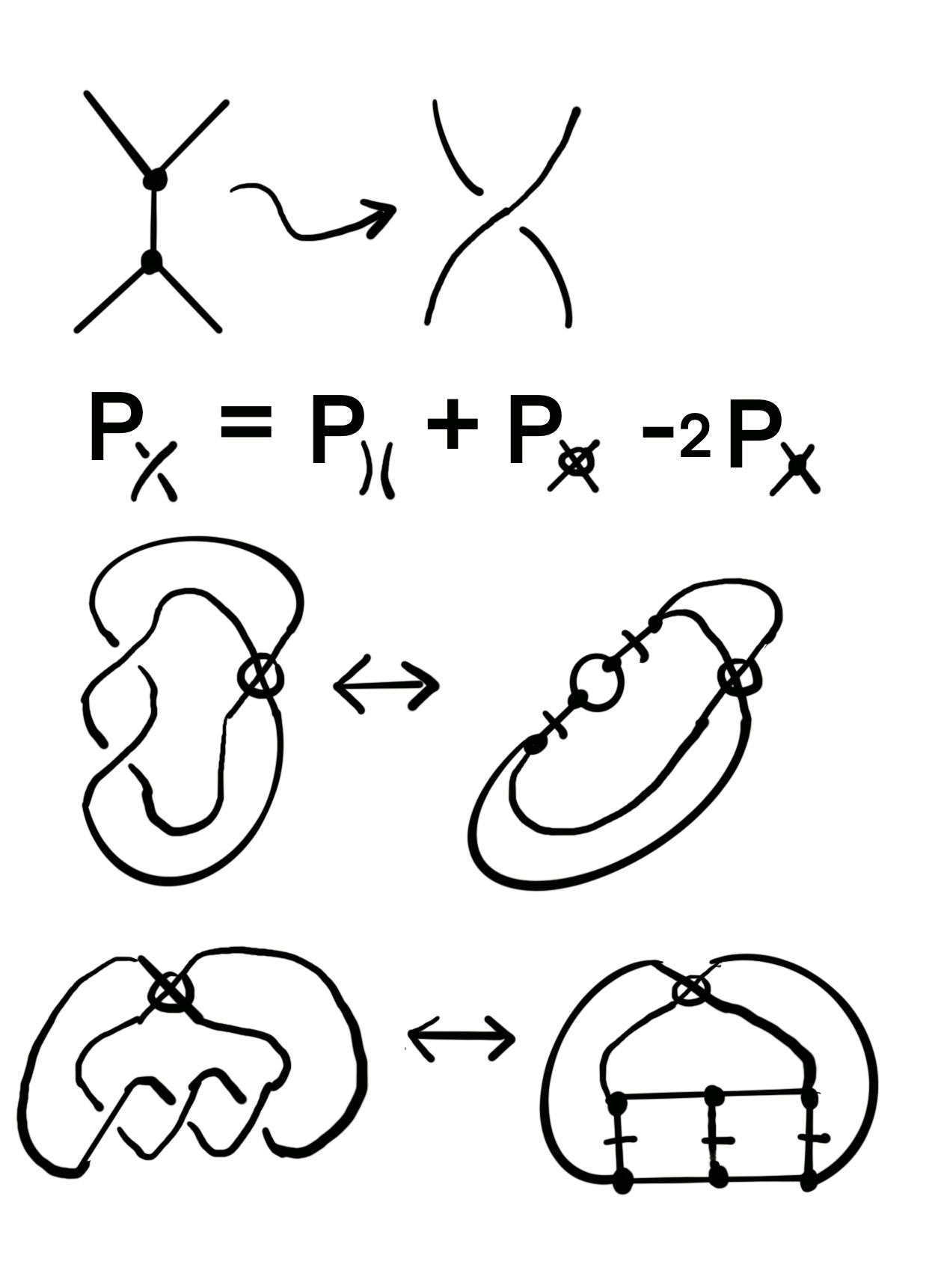}
     \end{tabular}
     \caption{Knot Diagram Expansion}
     \label{Figure5}
\end{center}
\end{figure}

It is worth mentioning that the logical tautology at $n=3$ gives a criterion for Tait colorability. Any trivalent graph can be arranged in the form shown in  Figure~\ref{Figure6} where $B$ is a black box containing the rest of the 
graph. Then we have the formula shown in  Figure~\ref{Figure6}. This means that if the graph $G$ is a minimal non-colorable graph, then the colorings of the two smaller graphs with wiggly lines must have the same colors at
the wiggles. That is the two closures of the black box both force same colors for the two closure arcs.  The simplest example of such a forcing box is also shown in Figure~\ref{Figure6} as a crossover of two arcs. 
In this case the uncolorable graph $G$ is equivalent to the planar dumbell graph shown in that figure, a graph with an isthmus. In  Figure~\ref{Figure7} we show how a more sophisticated black box produces the 
uncolorable and non-planar Petersen graph. The four-color theorem asserts that forcing black boxes that produce 1-connected graphs $G$ will have $G$ non-planar.\\

\begin{figure}
     \begin{center}
     \begin{tabular}{c}
     \includegraphics[width=10cm]{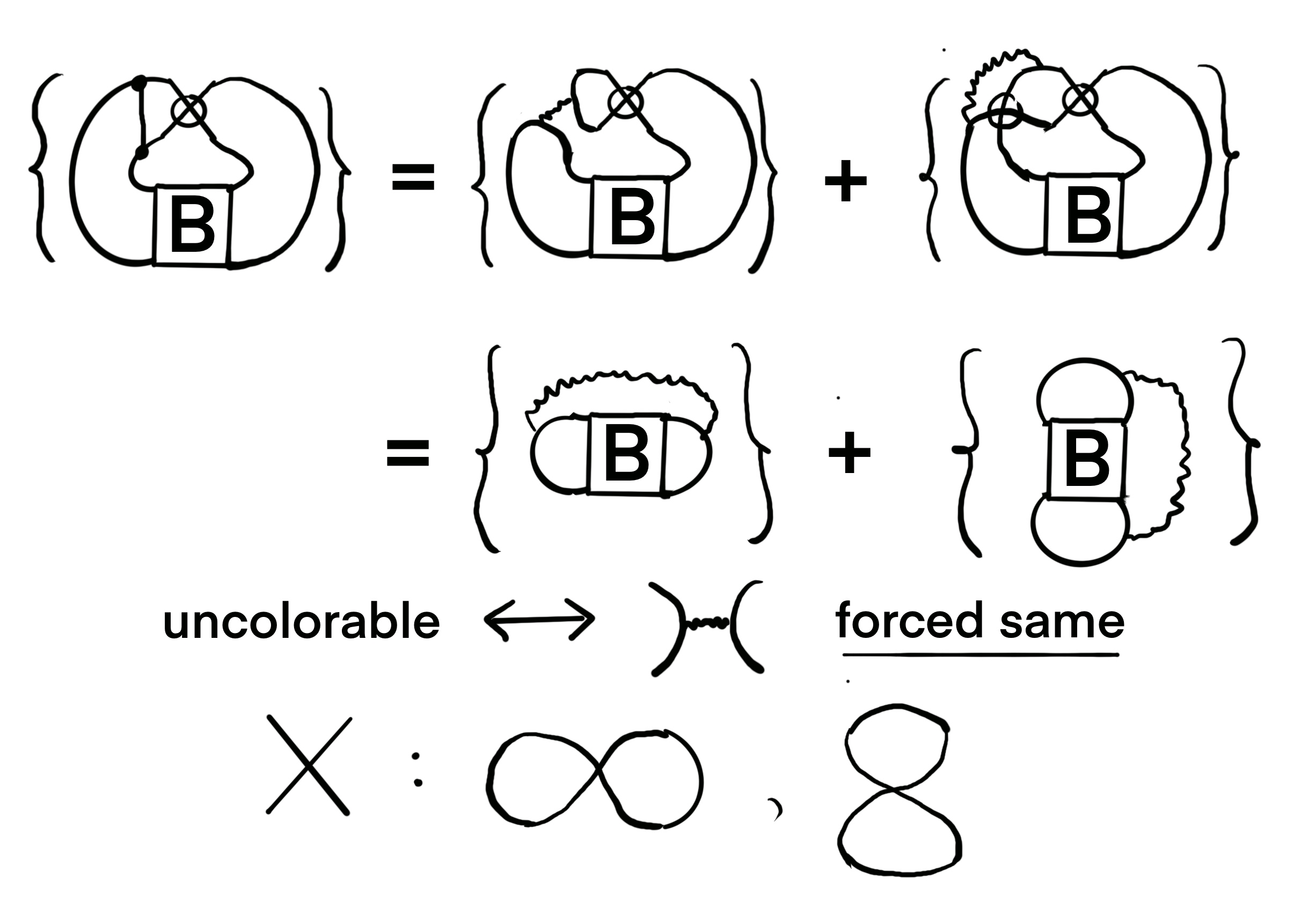}
     \end{tabular}
     \caption{Uncolorability Criterion}
     \label{Figure6}
\end{center}
\end{figure}

\begin{figure}
     \begin{center}
     \begin{tabular}{c}
     \includegraphics[width=8cm]{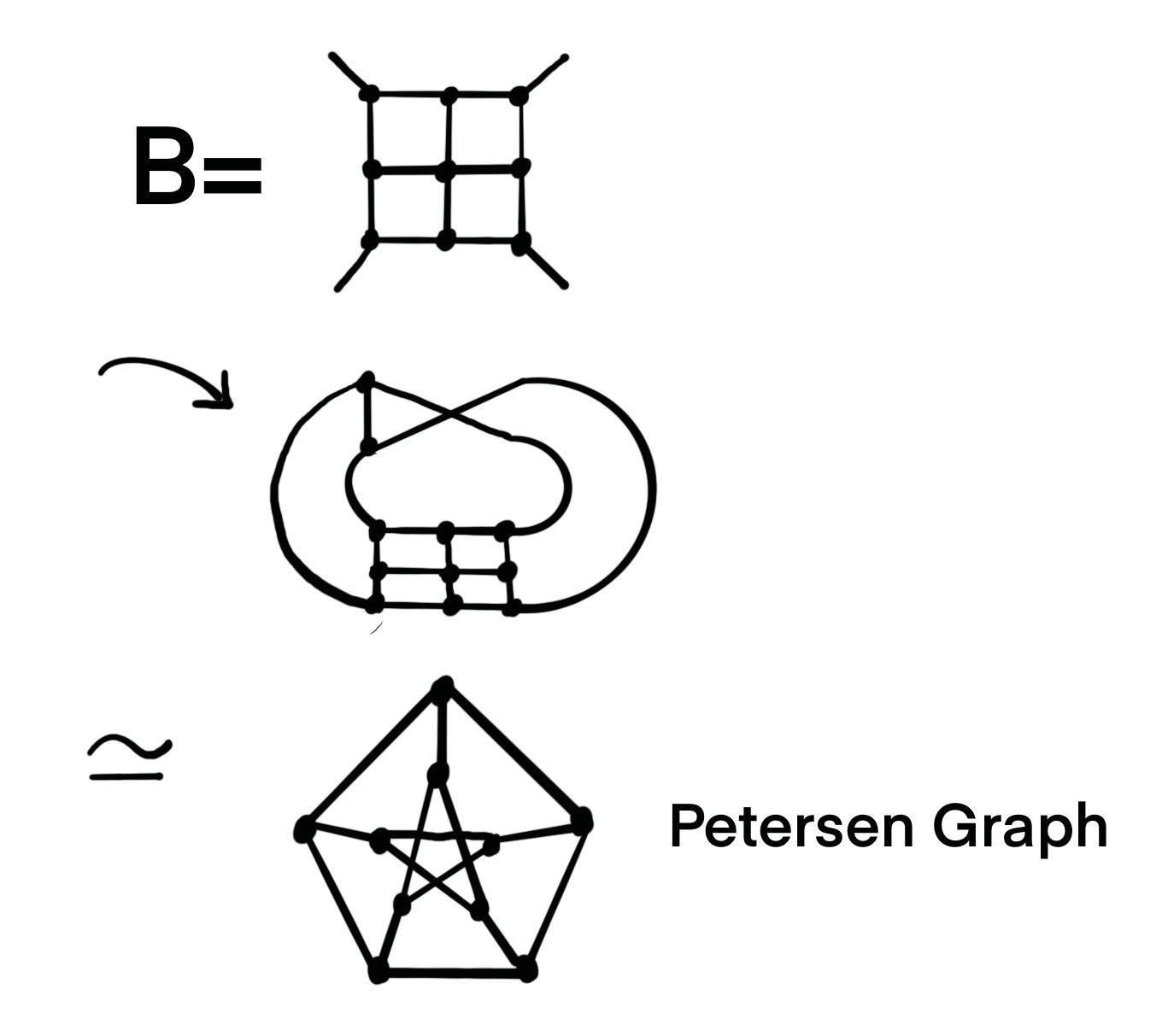}
     \end{tabular}
     \caption{Uncolorability Criterion and Petersen Graph}
     \label{Figure7}
\end{center}
\end{figure}

\newpage


 \end{document}